\documentclass[11pt,a4paper]{article}

\usepackage[english]{babel}
\usepackage{fancyhdr}
\usepackage[latin1]{inputenc}
\usepackage{amsmath}
\usepackage{amsfonts}
\usepackage{amssymb}
\usepackage{graphicx}
\usepackage{latexsym}
\usepackage{amsthm}
\usepackage{color}
\usepackage{midpage}
\usepackage[width=15.00cm]{geometry}
\usepackage{amscd}
\usepackage{tikz-cd}
\usepackage{hyperref}

\numberwithin{equation}{section}
\theoremstyle{plain}

\theoremstyle{definition}
\newcommand{\eps}{\varepsilon}


\numberwithin{equation}{section}
\theoremstyle{plain}
\newtheorem{thm}{Theorem}[section]
\theoremstyle{remark}

\theoremstyle{cor}
\newtheorem{cor}[thm]{Corollary}
\theoremstyle{lem}
\newtheorem{lem}[thm]{Lemma}
\theoremstyle{proposition}
\newtheorem{prop}[thm]{Proposition}
\DeclareMathOperator{\len}{len}

\DeclareMathOperator{\Cov}{Cov}
\DeclareMathOperator{\Vol}{Vol}
\DeclareMathOperator{\Corr}{Corr}
\DeclareMathOperator{\Var}{Var}
\providecommand{\keywords}[1]{\textbf{Keywords and Phrases:} #1}
\providecommand{\classification}[1]{\textbf{AMS Classification:}#1}

\begin{document}

\title{Nodal Lengths in Shrinking Domains for Random Eigenfunctions on $S^2$ }

\author{{Anna Paola Todino}\\Ruhr-Universit\"{a}t Bochum
	\\ \small Anna.Todino@ruhr-uni-bochum.de}

 \date{}
 \title{ Nodal Lengths in Shrinking Domains for\\ Random Eigenfunctions on $S^2$}
 \maketitle




\begin{abstract}
We investigate the asymptotic behavior of the nodal lines for random spherical harmonics restricted to shrinking domains, in the 2-dimensional case: e.g., the length of the zero set $\mathcal{Z}_{\ell,r_\ell} := \mathcal{Z}^{B_{r_{\ell}}}(T_\ell)=\len(\{x \in S^2 \cap B_{r_\ell}: T_\ell(x)=0 \})$, where $B_{r_{\ell}}$ is the spherical cap of radius $r_\ell$. We show that the variance of the nodal length is logarithmic in the high energy limit; moreover, it is asymptotically fully equivalent, in the $L^2$-sense, to the ``local sample trispectrum", namely, the integral on the ball of the fourth-order Hermite polynomial. This result extends and generalizes some recent findings for the full spherical case. As a consequence a Central Limit Theorem is established.
\end{abstract}

\begin{itemize}
	\item \keywords{} Random Eigenfunctions, Limit Theorem, Sample Trispectrum, Berry's Cancellation.
	
	\item \classification{} 35P20, 60F05, 58J50, 60G60.
\end{itemize}


	\section{Introduction and Background}
Let us consider the spherical Laplacian $\Delta_{S^2}$, defined as usual by $$\Delta_{S^2}=\frac{1}{\sin \theta} \frac{\partial}{\partial \theta} \bigg\{ \sin \theta \frac{\partial}{\partial \theta} \bigg\}+ \frac{1}{\sin^2 \theta} \frac{\partial}{\partial^2 \varphi}$$ and $\{T_\ell(x), x \in S^2\}$, satisfying
		 $\Delta_{S^2} T_{\ell}(x)+ \ell(\ell+1)T_{\ell}(x)=0,$ the centred isotropic Gaussian random spherical harmonics with covariance function given by 
		$$E[T_\ell(x) T_\ell(y)]=P_\ell(\cos d(x,y)),$$ being $P_\ell$ the Legendre polynomial and $d(x,y)$ the spherical geodesic distance between $x$ and $y$, $d(x,y)=\arccos (\langle x,y \rangle)$.
As usual, the nodal set of $T_\ell$ is given by $T_\ell^{-1}(0)=\{ x \in {S}^2: T_\ell(x)=0  \}$ and we denote its volume by
	\begin{equation}\label{total sphere}
	\mathcal{Z}(T_\ell)=\len(\{ x \in {S}^2 : T_\ell(x)=0 \});
	\end{equation}
	the analysis of these domains has been considered by many authors, see e.g.  \cite{Cheng}, \cite{Y1982}, \cite{Y1990}, \cite{DF}, \cite{BR11}, \cite{BW2016}. As a consequence of the general Yau's conjecture (\cite{Y1982}, \cite{Y1990}) for eigenfunctions on compact manifolds (proved in \cite{DF} for real analytic metrics and by \cite{Logunov a}, \cite{Logunov b} and \cite{LM} for the smooth case) we know that, in the high energy limit, the length of the nodal set is bounded by
	$$ c_1 \sqrt{\ell(\ell+1)} \leq \len(T_\ell^{-1}(0))\leq c_2 \sqrt{\ell(\ell+1)},$$ where $c_1, c_2 > 0$.
	In the case of Gaussian random eigenfunctions, some sharper probabilistic bounds can be given. The asymptotic behavior of the expected value was given in \cite{berard}; for any dimension $m, m \geq 2$, they obtained
	$${E}[\mathcal{Z}(T_\ell)^m]=c_m \sqrt{\ell(\ell+m-1)},$$ where $c_m=\dfrac{2\pi^{m/2}}{\sqrt{m} \Gamma(\frac{m}{2})}$ (see also \cite{N} and \cite{35}).
	As far as the variance is concerned, \cite{N} gave an upper bound which was later improved in \cite{35} and \cite{W}, where it was computed to be
		\begin{equation}\label{varianza vecchia}
	\Var(\mathcal{Z}(T_\ell))=\dfrac{1}{32} \log {\ell} +O(1)
	\end{equation}
	as $\ell \rightarrow \infty$. As a consequence, the variance of the nodal length $\mathcal{Z}(T_\ell)$ has smaller order $O(\log \ell)$, in the high energy limit, with respect to the variance of boundary length at thresholds different from zero, which has been shown to be $O(\ell)$ (see for instance \cite{Rossi}). This phenomenon is known as ``Berry's cancellation" (\cite{Berry1977}); it is known to occur on the torus (\cite{KKW}) and on other geometric functionals of random eigenfunctions, see e.g., \cite{CMW}, \cite{CMW2016}, \cite{CM}. More precisely, as far as the torus is concerned, \cite{RW} and \cite{KKW} studied the volume of the nodal line (denoted with $\mathcal{L}_\ell$) of random eigenfunctions (``arithmetic random waves") $\mathcal{T}^2=R^2/Z^2$. The expected length was evaluated with the Kac-Rice formula in \cite{RW} (Proposition 4.1),
	$$E[\mathcal{L}_\ell]= \dfrac{1}{2\sqrt{2}} \sqrt{4\pi^2 \ell},$$
	and the asymptotic behavior of the variance was established in \cite{KKW}; it holds that
	$$\Var(\mathcal{L}_\ell)=c_\ell \cdot \dfrac{4\pi^2 \ell}{\mathcal{N}_\ell^2} \bigg(1+O\bigg(\dfrac{1}{\mathcal{N}_\ell^{1/2}}\bigg)\bigg),$$
	where $\mathcal{N}_\ell$ is the number of lattice points lying on the radius-$\sqrt{\ell}$ circle (\cite{KKW}) and $c_\ell$ is the leading coefficient, depending on the distribution of the lattice points on the circle.
	Hence, as mentioned before, the ``Berry's cancellation" phenomenon (\cite{Berry1977}) takes place also for the toral nodal length. The distribution of $\mathcal{L}_\ell$ was investigated in \cite{MPRW}, where the authors established a nonCentral Limit Theorem. See also \cite{RossiW} for nodal intersections, \cite{BW2016} for the number of nodal domains.
	Berry's random planar wave model was also considered (see \cite{NPR}), both in the real and complex case.\\\\
	A general interpretation of these results can be given quickly as follows (see \cite{MPRW}, \cite{MRW}, \cite{Rossi2} for more discussions and details). The nodal length $\mathcal{L}_\ell$ of random eigenfunctions can be expanded, in the $L^2-$sense, in terms of its $q$-th order chaotic components, to obtain the orthogonal expansion:
	$$\mathcal{L}_\ell-E[\mathcal{L}_\ell]=\sum_{q=1}^{\infty} Proj[\mathcal{L}_\ell|C_q],$$
	$Proj[\mathcal{L}_\ell|C_q]$ denoting the projection on the $q-$component (see the supplement article \cite{supp}, Section A.1). It can be shown that, in the case of functionals evaluated on the full sphere or torus, the projection on the first component vanishes identically; in the nodal case, $Proj[\mathcal{L}_\ell|C_2]$ vanishes as well, and the whole series is dominated simply by the term $Proj[\mathcal{L}_\ell|C_4]$, e.g., the so-called fourth-order chaos, which has indeed logarithmic variance. More explicitly, the variance of this single term is asymptotically equivalent to the variance of the full series, and its asymptotic distribution (Gaussian in the spherical case, nonGaussian for the torus, see \cite{RW}) gives also the limiting behavior of the nodal fluctuations.
	It should also be noted that, in the case of the sphere, $Proj[\mathcal{L}_\ell|C_4]$ takes a very simple form, because it is proportional to the so-called sample trispectrum of $T_\ell$, $\int_{S^2} H_4(T_\ell(x))\,dx$ (being $H_j$ the $j-$th Hermite polynomial): this is to some extent unexpected, because the fourth-order chaotic term should in general be given by a complicated linear combination of polynomials involving also the gradient of the eigenfunctions (see the supplement article \cite{supp}, Section A.1.1), as it happens for arithmetic random waves on the torus, see \cite{MPRW}).\\\\
	A natural question at this stage is to investigate what happens on subdomains of the sphere or other manifolds (see for example \cite{BMW} for arithmetic random waves). The nodal volume inside a ``nice" domain $F \subset S^2$ of the sphere, is defined as
	\begin{equation}\label{alessio}
	\mathcal{Z}^F(T_\ell):= \len(\{T_\ell=0 \}\cap F).
	\end{equation}
	In \cite{W}, to address this issue the so-called \textit{linear statistics} of the nodal set are introduced; more precisely, let $\varphi:S^2 \rightarrow R$ be a smooth function, and define the random variable $\mathcal{Z}^\varphi(T_\ell)$ as
	\begin{equation}\label{linear stat}
	\mathcal{Z}^\varphi(T_\ell):= \int_{T_\ell^{-1}(0)} \varphi(x) \,d\len_{T_\ell^{-1}(0)}(x).
	\end{equation}
	Apparently this definition is well-posed only for continuous test function $\varphi \in C(S^2)$; nevertheless, it was shown in \cite{W} that bounded variation functions $BV({S}^2)$ can be considered: indeed, it is possible to prove that, for $\varphi \in BV({S}^2) \cap L^{\infty}({S}^2)$ a not identically vanishing function, as $\ell \rightarrow \infty$, the variance satisfies
	\begin{equation}\label{V1}
	\Var(\mathcal{Z}^\varphi(T_\ell))=\dfrac{||\varphi||_{L^2({S}^2)}^2}{128 \pi} \cdot \log {\ell} +O_{\varphi}(1).
	\end{equation}
	These results allow to cover indicator functions, indeed (\ref{alessio}) is equal to (\ref{linear stat}) for $\varphi(x)=1_F(x)$, e.g. $\mathcal{Z}^{\varphi}(T_\ell)=\len (\{x \in {S}^2 \cap F: T_\ell(x)=0 \}).$

	As a consequence of (\ref{V1}), for $F \subset {S}^2$ a submanifold of the sphere with $C^2$ boundary, and $|F|$ denotes its area, it was proved in \cite{W} that, as $\ell \rightarrow \infty$, the variance of (\ref{alessio}) is given by:
	$$\Var(\mathcal{Z}^F(T_\ell))=\dfrac{|F|}{128\pi} \cdot \log {\ell}+O_F(1),$$
	e.g., logarithmic behavior occurs also in subdomains.
	
	As far as the torus is concerned, the nodal length of arithmetic random waves restricted to shrinking balls (denoted with $\mathcal{L}_{\ell,r_{\ell}}$, where $r_{\ell}$ is the radius of the ball) was investigated in \cite{BMW} under the condition $r_\ell > \ell^{-1/2}$. The mean was easily obtained by means of Kac-Rice formula (\cite{Adler e Taylor}, \cite{Azais e ws})
	$${E}[\mathcal{L}_{\ell,r_{\ell}}]=\dfrac{1}{2\sqrt{2}}(\pi r_\ell^2)\cdot \sqrt{4\pi^2\ell}, $$
	whereas the variance was shown to be proportional to the variance of the toral nodal length, e.g.,
	$$\Var(\mathcal{L}_{\ell,r_{\ell}})=c_\ell \cdot (\pi r_\ell^2)^2 \cdot \dfrac{4\pi^2 \ell}{\mathcal{N}_\ell^2} \bigg(1+O\bigg(\dfrac{1}{\mathcal{N}_\ell^{1/2}}\bigg)\bigg).$$
	More surprisingly, it was shown that asymptotically the local and global nodal lengths are fully correlated. This result entails also that, up to a scaling factor, the same limiting nonGaussian distribution holds in both cases. 
	
	\section{Main Results}
	In this paper, we investigate the behavior of the nodal length for random spherical harmonics evaluated in a shrinking ball on the sphere. Without loss of generality, we consider spherical caps centered in the North Pole $N$. We prove that the nodal length is still dominated by a single term, corresponding to the fourth chaotic projection; moreover, this term can be written as a local form of the sample trispectrum, and its asymptotic variance is logarithmic (e.g., $O(r_\ell^2 \log(r_\ell \ell ))$). Contrary to the case of the torus, however, full correlation does not hold between nodal and global statistics. ``Berry's cancellation" phenomenon takes place in this framework as well, and indeed the first and second order chaotic components are still of lower order with respect to the leading term, although not identically equal to zero as in the full spherical case.\\\\
	Here and in the rest of the paper we will always denote with $B_{r_{\ell}} \subset {S}^2$ a shrinking spherical cap of radius $r_{\ell}$, with $r_\ell \rightarrow 0$, as $\ell \rightarrow \infty$, centered in $N$ such that
	\begin{equation}\label{cond su r}
	r_{\ell}\ell \rightarrow \infty
	\end{equation}
	as $\ell \rightarrow \infty$ (meaning that the support is not shrinking too rapidly). Indeed, the average length on the disc of radius $r_\ell$ is $r_\ell^2 \ell <r_\ell \ell$; hence, if condition (\ref{cond su r}) is not satisfied, we cannot expect any asymptotic result to observe.
	We denote the nodal length in these domains by
	\begin{equation}\label{def}
	\mathcal{Z}_{\ell,r_\ell} := \mathcal{Z}^{B_{r_\ell}}(T_\ell)=\len(\{x \in {S}^2 \cap B_{r_\ell}: T_\ell(x)=0 \}).
	\end{equation}
	From the Kac-Rice formula (\cite{Adler e Taylor}, \cite{Azais e ws}), it is easy to see that
	$${E}[\mathcal{Z}_{\ell,r_\ell}]=\sqrt{\dfrac{\ell(\ell+1)}{2}} \dfrac{|B_{r_{\ell}}|}{2}.$$
	Note that, since the area of a spherical cap $B_{r_\ell}$ of radius $r_\ell$ is given by $|B_{r_\ell}|=2\pi(1-\cos r_\ell),$ we have that
	$${E}[\mathcal{Z}_{\ell,r_\ell}]=\sqrt{\dfrac{\ell(\ell+1)}{2}} \pi (1-\cos r_\ell).$$
	Now let $\varphi_{\ell}: {S}^2 \rightarrow {R}$, $\forall \ell $, be the indicator function $\varphi_{\ell}(x)=1_{B_{r_{\ell}}}(x)$; our first non-trivial result concerns the asymptotic variance is the following.
	\begin{thm}\label{varianza nodal} Let $\mathcal{Z}_{\ell,r_\ell}$ be the nodal length defined in (\ref{def}), then its variance, as $\ell \rightarrow \infty,$ is given by
		\begin{equation}\label{variance}
		\Var(\mathcal{Z}_{\ell,r_{\ell}})=\dfrac{1}{256} \cdot r_{\ell}^2 \log(r_{\ell} \ell)+O(r_{\ell}^2).
		\end{equation}
	\end{thm}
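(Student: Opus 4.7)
The plan is to adapt the Wiener chaos decomposition strategy developed for the global nodal length in \cite{MPRW} and \cite{W} to the shrinking cap $B_{r_\ell}$. Starting from the (regularised) Kac--Rice representation
$$\mathcal{Z}_{\ell,r_\ell} = \int_{B_{r_\ell}} \delta(T_\ell(x))\,|\nabla T_\ell(x)|\, dx,$$
I expand $\delta(T_\ell)$ and $|\nabla T_\ell|/\sqrt{\ell(\ell+1)/2}$ in Hermite polynomials of the field and of the normalised gradient, producing the orthogonal decomposition
$$\mathcal{Z}_{\ell,r_\ell} - \mathbb{E}[\mathcal{Z}_{\ell,r_\ell}] = \sum_{q \geq 1} \mathrm{Proj}[\mathcal{Z}_{\ell,r_\ell}\,|\,q].$$
Odd chaoses vanish by the symmetry $T_\ell \mapsto -T_\ell$, so only even $q$ survive; the goal is to prove the $q=4$ component dominates and to compute its variance explicitly.

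The fourth-chaos projection is a linear combination of integrals over $B_{r_\ell}$ of mixed Hermite polynomials of $T_\ell$ and its rescaled gradient components. Reproducing the algebraic identity of \cite{MPRW} in its pointwise form and then integrating over the cap, I expect
$$\mathrm{Proj}[\mathcal{Z}_{\ell,r_\ell}\,|\,4] = -\frac{1}{96}\sqrt{\frac{\ell(\ell+1)}{2}} \int_{B_{r_\ell}} H_4(T_\ell(x))\,dx + R_\ell,$$
with an $L^2$-remainder $R_\ell$ of order $o(r_\ell\sqrt{\log(r_\ell\ell)})$ coming from boundary contributions. The variance of the main term reduces to $\frac{\ell(\ell+1)}{2\cdot 96^2} \cdot 4! \int_{B_{r_\ell}}\int_{B_{r_\ell}} P_\ell(\cos d(x,y))^4\, dx\, dy$. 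Using Hilb's asymptotic $P_\ell(\cos\theta) = \sqrt{2/(\pi\ell\sin\theta)}\,\cos((\ell+\tfrac12)\theta-\tfrac{\pi}{4}) + O(\ell^{-3/2})$ and the fact that $\cos^4$ averages to $3/8$, I pass to geodesic polar coordinates around a generic $x\in B_{r_\ell}$. Splitting the $y$-integral at the microscale $d(x,y)\sim \ell^{-1}$, where the trivial bound $P_\ell^4 \leq 1$ contributes only $O(\ell^{-2})$, the bulk yields $\int_{B_{r_\ell}} P_\ell(\cos d(x,y))^4\,dy \sim \frac{3}{\pi\ell^2}\log(r_\ell\ell)$. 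Multiplying by $|B_{r_\ell}|\sim \pi r_\ell^2$ and the explicit prefactor produces exactly the constant $1/256$ claimed in \eqref{variance}.

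The remaining chaoses must be shown to contribute only at the level $O(r_\ell^2)$ (without logarithm). For $q\geq 6$, the pointwise bound $|P_\ell(\cos\theta)|\leq C(\ell\sin\theta)^{-1/2}$ away from the poles gives $\int_{B_{r_\ell}^2} P_\ell^q = O(r_\ell^2\,\ell^{-q/2})$; after combining with the $\ell$ factor from the gradient renormalisation one obtains variance $O(r_\ell^2\,\ell^{1-q/2})$, which is summable in $q$ and negligible. The step I expect to be the genuine obstacle is the second chaos: in the global case it vanishes identically via a Berry-type cancellation between $H_2(T_\ell)$ and traces of Hermite polynomials in the second derivatives, a cancellation that uses the eigenfunction equation $\Delta_{\mathbb{S}^2} T_\ell = -\ell(\ell+1) T_\ell$. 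On the cap this cancellation holds only pointwise, leaving a boundary-layer residue; controlling it requires expanding the projection explicitly and estimating double integrals of $P_\ell^2$ and its derivatives over $B_{r_\ell}\times B_{r_\ell}$, exploiting the pointwise identity so that the naively dominant $\ell^2$ prefactor is consumed and the residue is shown to be exactly $O(r_\ell^2)$. Once all these pieces are assembled, \eqref{variance} follows.
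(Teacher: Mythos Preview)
Your route is genuinely different from the paper's. The paper does \emph{not} prove Theorem~\ref{varianza nodal} through the chaos expansion; it argues directly at the level of the Kac--Rice two-point correlation function. Writing
\[
\mathbb{E}\bigl[\mathcal{Z}^{\varphi_\ell}(T_\ell)^2\bigr]=\int_{\mathbb{S}^2\times\mathbb{S}^2}\tilde K_\ell(x,y)\,\varphi_\ell(x)\varphi_\ell(y)\,dx\,dy,
\]
the authors introduce the auxiliary function $W^{\varphi_\ell}(\rho)=\frac{1}{8\pi^2}\int_{d(x,y)=\rho}\varphi_\ell(x)\varphi_\ell(y)\,dx\,dy$, reduce the variance to a one-dimensional integral $\int_0^{2r_\ell L}(K_\ell(\psi)-\tfrac14)\,W^{\varphi_\ell}(\psi/L)\,d\psi$, and feed in Wigman's full asymptotic expansion of $K_\ell$. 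Since this representation is only justified for $C^1$ test functions, they first prove the result for smooth approximations $\varphi_\ell^i\to 1_{B_{r_\ell}}$ (with uniformly bounded $L^\infty$ and BV norms, so the error terms survive the limit) and then pass to the indicator. The chaos decomposition, the trispectrum identification, and the second-chaos analysis appear in the paper only \emph{afterwards}, as ingredients for the CLT.

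Your chaos-expansion route is viable in principle, but two steps are too soft as written. First, the tail estimate for $q\ge 6$ is wrong: for $q\ge 5$ the integral $\int_{B_{r_\ell}^2}P_\ell^q$ is dominated by the short-range region $d(x,y)\sim\ell^{-1}$ and is of order $r_\ell^2\ell^{-2}$ uniformly in $q$, not $r_\ell^2\ell^{-q/2}$; moreover the $q$-th chaos of the length functional involves derivatives of $P_\ell$ as well, and summing over $q$ requires explicit control of the Hermite coefficients of $\delta$ and of $|\cdot|$, not just a single power of $P_\ell$. Second, the claim $\mathrm{Proj}[\mathcal{Z}_{\ell,r_\ell}\,|\,4]=\mathcal{M}_{\ell,r_\ell}+R_\ell$ with small $R_\ell$ is precisely the paper's Proposition~\ref{corr}, and there is no pointwise algebraic identity from \cite{MPRW} that delivers it: on the sphere the fourth chaos genuinely contains gradient terms, and their asymptotic cancellation is established only by computing $\mathrm{Cov}(\mathcal{Z}_{\ell,r_\ell},\mathcal{M}_{\ell,r_\ell})$ via the cross-correlation function $\mathcal{J}_\ell(\psi,4)$ and comparing it with the two variances. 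The payoff of the paper's Kac--Rice approach is that all of this is absorbed into the single expansion of $K_\ell$, so the variance falls out without ever summing a chaos series; your approach would instead yield the variance and the dominance of the trispectrum simultaneously, at the cost of the extra bookkeeping above.
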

	The next result is the following Central Limit Theorem.
	
	\begin{thm}\label{CLT}
		Let $\mathcal{Z}_{\ell,r_\ell}$ defined in (\ref{def}), then, as $\ell \rightarrow \infty$, we have that
		$$\dfrac{\mathcal{Z}_{\ell,r_\ell}-{E}[\mathcal{Z}_{\ell,r_\ell}]}{\sqrt{\Var(\mathcal{Z}_{\ell,r_\ell})}} \rightarrow_{d} Z,$$
		where $\rightarrow_{d}$ denote the convergence in distribution and $Z \sim \mathcal{N}(0,1)$.
	\end{thm}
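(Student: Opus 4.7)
The plan is to reduce the statement to an application of the Fourth Moment Theorem of Nourdin and Peccati. The starting point is the Wiener--It\^o chaos expansion
$$\mathcal{Z}_{\ell,r_\ell} - \mathbb{E}[\mathcal{Z}_{\ell,r_\ell}] = \sum_{q\geq 1} \mathrm{Proj}[\mathcal{Z}_{\ell,r_\ell} \mid 2q],$$
in which only even chaoses appear, by parity of the Kac--Rice density. The structural $L^2$-equivalence between the nodal length and the ``local sample trispectrum'' announced in the abstract (and underlying Theorem~\ref{varianza nodal}) supplies a deterministic sequence $\alpha_\ell \asymp \sqrt{\ell}$ such that
$$\mathcal{Z}_{\ell,r_\ell} - \mathbb{E}[\mathcal{Z}_{\ell,r_\ell}] = \alpha_\ell \, \mathcal{M}_\ell + R_\ell, \qquad \mathcal{M}_\ell := \int_{B_{r_\ell}} H_4(T_\ell(x))\, dx,$$
with $\Var(R_\ell) = o(r_\ell^2 \log(r_\ell\ell))$. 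Combined with \eqref{variance}, Slutsky's lemma then reduces the claimed CLT to the statement that $\mathcal{M}_\ell / \sqrt{\Var(\mathcal{M}_\ell)}$ converges in distribution to $\mathcal{N}(0,1)$.

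Since $\mathcal{M}_\ell$ belongs to the fourth Wiener chaos generated by $\{T_\ell(x): x\in\mathbb{S}^2\}$, the Nourdin--Peccati Fourth Moment Theorem applies and the asymptotic normality of $\mathcal{M}_\ell$ is equivalent to
$$\mathrm{cum}_4(\mathcal{M}_\ell) = o\!\big(\Var(\mathcal{M}_\ell)^2\big) \qquad \text{as } \ell\to\infty.$$
I would then expand $\mathrm{cum}_4(\mathcal{M}_\ell)$ via the diagram formula for products of Hermite polynomials as a finite sum of integrals of the form
$$\int_{B_{r_\ell}^4} \prod_{1\leq i<j\leq 4} P_\ell(\cos d(x_i,x_j))^{a_{ij}} \, dx_1 \cdots dx_4,$$
indexed by connected diagrams on four vertices with $\sum_{j\neq i} a_{ij} = 4$ for each $i$.

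The main obstacle lies in bounding these connected diagram integrals sharply on the shrinking cap $B_{r_\ell}$. The plan is to insert Hilb's asymptotic
$$P_\ell(\cos\theta) = \sqrt{\theta / \sin\theta}\, J_0\!\big((\ell+\tfrac{1}{2})\theta\big) + O(\ell^{-3/2}),$$
uniform for $\theta \in [0, r_\ell]$, and to pass to the rescaled planar variables $u_i = \ell\,\exp_N^{-1}(x_i)$, so that the integration domain becomes a Euclidean disc of radius $r_\ell\ell \to \infty$ and each kernel becomes, to leading order, $J_0(\|u_i - u_j\|)$. Exploiting the decay $|J_0(t)| \lesssim t^{-1/2}$ together with the connectedness of the diagrams, each contribution is shown to be of strictly lower order than $\Var(\mathcal{M}_\ell)^2 \asymp \ell^{-2} r_\ell^4 \log^2(r_\ell \ell)$. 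Since the number of diagrams is a combinatorial constant independent of $\ell$, summing the estimates and invoking the Fourth Moment Theorem concludes the argument.
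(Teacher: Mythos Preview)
Your overall architecture matches the paper's: reduce to the local sample trispectrum via the $L^2$-equivalence (Proposition~\ref{corr}), then establish asymptotic normality of the fourth-chaos term by the Fourth Moment Theorem, expanding $\mathrm{cum}_4$ through the diagram formula into integrals of products of Legendre powers over $B_{r_\ell}^4$.

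Two remarks on the details. First, a scaling slip: the prefactor in front of $\int H_4(T_\ell)$ is $\asymp \ell$, not $\sqrt{\ell}$ (it is $-\tfrac{1}{4}\sqrt{\ell(\ell+1)/2}\,/4!$), so $\Var(\mathcal{M}_\ell)^2 \asymp \ell^{-4} r_\ell^4 \log^2(r_\ell\ell)$, not $\ell^{-2}$. Second, your plan to bound the diagram integrals by a full planar rescaling and the Bessel decay $|J_0(t)|\lesssim t^{-1/2}$ is natural, but this bound carries no information near $t=0$, and the near-diagonal region is exactly where the integrand is largest; you will still need to isolate and control configurations with some $d(x_i,x_j)$ small. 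The paper does not carry out the Bessel integration directly: instead it splits $B_{r_\ell}^4 = \mathcal{L}(\eps)\cup\mathcal{L}(\eps)^c$ according to whether all pairwise distances exceed a threshold $\eps$. On $\mathcal{L}(\eps)$ one uses the uniform bound $|P_\ell|\ll(\ell\eps)^{-1/2}$ on a single factor and reduces via Cauchy--Schwarz to products of $\int_{B_{r_\ell}^2}P_\ell^2$ and $\int_{B_{r_\ell}^2}P_\ell^4$, already computed; on $\mathcal{L}(\eps)^c$ one applies H\"older/Cauchy--Schwarz to reduce to $\int_{\{d(x_1,x_2)<\eps\}}P_\ell^4$, which is handled by the same $W^{\varphi}$-machinery used for the variance. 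Optimizing over $\eps$ then yields $\mathrm{cum}_4(h_{\ell,r_\ell,4}) = O\big(\ell^{-4} r_\ell^4 (\log\eps\ell)^{1/4}(\log r_\ell\ell)^{7/4}\big)$, which beats the squared variance. Your Bessel route can be made to work, but the paper's split-and-reduce is shorter because it recycles the two-point integrals rather than attacking the four-fold integral head-on.
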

	
	Theorem \ref{CLT} follows by exploiting Theorem 5.2.6 in \cite{Peccati Nourdin} to the fourth chaotic component, after lengthy computations of the fourth cumulant (which is, for $Y$ a centred random variable, $cum_4(Y)=EY^4-3(EY^2)^2$) of this chaotic projection.
	
	\subsection{Comparison with the 2-dimensional Torus}
	Although the differences and the similarities of the results obtained for the torus and for the sphere have already been discussed, we make them clearer in this subsection.
	\begin{itemize}
		\item In contrast to the torus, where a full correlation between the nodal length in shrinking domains and the one in the total manifold has been proved (see \cite{BMW}), in the sphere the following proposition holds.
		\begin{prop}\label{correlation1} Let $\mathcal{Z}_{\ell,r_{\ell}}$ be defined in (\ref{def}) and $\mathcal{Z}(T_\ell) $ in (\ref{total sphere}), the correlation between $\mathcal{Z}_{\ell,r_{\ell}}$ and $\mathcal{Z}(T_\ell) $, as $\ell \rightarrow \infty$, is given by
			$$\Corr(\mathcal{Z}_{\ell,r_{\ell}}; \mathcal{Z}(T_\ell))= O\bigg(r_{\ell} \sqrt{\dfrac{\log \ell}{\log r_\ell \ell}}\bigg)=o(1).$$
		\end{prop}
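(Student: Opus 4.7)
The plan is to use the central structural result of this paper (together with its analogue on the full sphere established in earlier work): both $\mathcal{Z}_{\ell,r_\ell}$ and $\mathcal{Z}(T_\ell)$ admit Wiener--It\^o chaos expansions dominated in $L^2$ by their fourth-order projections, which coincide (up to explicit constants involving $\sqrt{\ell(\ell+1)}$) with the local and full sample trispectra $\int_{B_{r_\ell}} H_4(T_\ell(x))\,dx$ and $\int_{\mathbb{S}^2} H_4(T_\ell(y))\,dy$. Because Wiener chaoses of different orders are orthogonal, the covariance decomposes as a sum of chaos-level contributions $\sum_{q\ge 1}\Cov(\mathrm{Proj}_q[\mathcal{Z}_{\ell,r_\ell}],\mathrm{Proj}_q[\mathcal{Z}(T_\ell)])$, and the \emph{global} projections on the first and second chaos vanish identically, so those two levels contribute nothing regardless of the local projections.

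First I would compute the dominant $q=4$ term. By the diagram/isometry formula $\mathbb{E}[H_4(T_\ell(x))H_4(T_\ell(y))]=4!\,P_\ell(\cos d(x,y))^4$, together with isotropy, the relevant cross expectation equals
\[
4!\,|B_{r_\ell}|\cdot 2\pi\int_{-1}^{1} P_\ell(t)^4\,dt,
\]
which is independent of the centre of the cap. The classical consequence of Hilb's asymptotic formula for Legendre polynomials yields $\int_{-1}^{1}P_\ell(t)^4\,dt=O(\log\ell/\ell^2)$, and combined with $|B_{r_\ell}|=O(r_\ell^2)$ and the overall factor $\ell(\ell+1)$ coming from the Kac--Rice representation of the projections, this produces
\[
\Cov(\mathrm{Proj}_4[\mathcal{Z}_{\ell,r_\ell}],\mathrm{Proj}_4[\mathcal{Z}(T_\ell)])=O(r_\ell^2\log\ell).
\]
Odd chaoses contribute zero by parity. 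For even $q\ge 6$ I would bound each cross-covariance by $C_q\,\ell(\ell+1)\int_{B_{r_\ell}}\int_{\mathbb{S}^2}|P_\ell(\cos d(x,y))|^q\,dy\,dx$, which is smaller than the $q=4$ term since $\int_{-1}^{1}|P_\ell(t)|^q\,dt=O(\ell^{-2})$ loses the logarithmic enhancement once $q\ge 6$.

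Finally I would substitute $\Cov(\mathcal{Z}_{\ell,r_\ell},\mathcal{Z}(T_\ell))=O(r_\ell^2\log\ell)$ together with $\Var(\mathcal{Z}_{\ell,r_\ell})\asymp r_\ell^2\log(r_\ell\ell)$ from Theorem \ref{varianza nodal} and the known $\Var(\mathcal{Z}(T_\ell))\asymp \log\ell$ into the definition of correlation:
\[
\Corr(\mathcal{Z}_{\ell,r_\ell},\mathcal{Z}(T_\ell))=\frac{O(r_\ell^2\log\ell)}{r_\ell\sqrt{\log(r_\ell\ell)\,\log\ell}}=O\!\left(r_\ell\sqrt{\frac{\log\ell}{\log(r_\ell\ell)}}\right),
\]
which is the claimed estimate. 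The main technical obstacle is the uniform control of the remainder coming from the high-order even chaoses: one must verify that their total cross-covariance is genuinely negligible with respect to $r_\ell^2\log\ell$. This follows by combining Cauchy--Schwarz with the individual variance bounds of each high-order chaotic component (the analogous bookkeeping was already required for Theorem \ref{varianza nodal}), exploiting that higher $L^p$-norms of the Legendre kernel lose the logarithmic factor. Once this is in place, the rest reduces to the Legendre integral computation above.
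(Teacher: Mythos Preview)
Your approach ultimately reaches the right order, but it takes a substantially longer road than the paper's and leaves a few steps under-justified.

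The paper avoids the chaos decomposition entirely. It observes that the cross second moment $\mathbb{E}[\mathcal{Z}_{\ell,r_\ell}\cdot\mathcal{Z}(T_\ell)]$ is the integral over $B_{r_\ell}\times\mathbb{S}^2$ of the two–point correlation kernel $\tilde K_\ell(d(x,y))$. By isotropy the inner integral $\int_{\mathbb{S}^2}\tilde K_\ell(d(x,y))\,dy$ does not depend on $x$, so one gets the \emph{exact} identity
\[
\Cov(\mathcal{Z}_{\ell,r_\ell},\mathcal{Z}(T_\ell))=\frac{|B_{r_\ell}|}{|\mathbb{S}^2|}\,\Var(\mathcal{Z}(T_\ell)).
\]
Substituting $|B_{r_\ell}|=2\pi(1-\cos r_\ell)\sim\pi r_\ell^2$, $\Var(\mathcal{Z}(T_\ell))=\tfrac{1}{32}\log\ell+O(1)$ and Theorem~\ref{varianza nodal} then gives the stated bound in two lines. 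This isotropy trick is the key idea, and it bypasses every technical issue you raise.

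On your route: the statement that the fourth projections ``coincide (up to explicit constants)'' with the sample trispectra is not literally true; in both the local and global cases $\mathrm{Proj}_4$ is a linear combination of several Hermite monomials in $T_\ell$ \emph{and} its normalized gradient, and is only \emph{asymptotically} fully correlated with $\mathcal{M}_{\ell,r_\ell}$ (resp.\ $\mathcal{M}_\ell$). So your computation via $\mathbb{E}[H_4(T_\ell(x))H_4(T_\ell(y))]=4!P_\ell^4$ handles only one piece of the $q=4$ cross-covariance; the gradient pieces still have to be controlled. Likewise, for $q\ge 6$ the bound ``$C_q\,\ell(\ell+1)\int_{B_{r_\ell}}\int_{\mathbb{S}^2}|P_\ell|^q$'' is not automatic, because the kernels there also involve $P_\ell'$ and $P_\ell''$; and if you fall back on Cauchy--Schwarz using only $\sum_{q\neq 4}\Var(\mathrm{Proj}_q)=O(r_\ell^2)$ locally and $O(1)$ globally, the residual covariance bound is $O(r_\ell)$, which can dominate $O(r_\ell^2\log\ell)$ for small $r_\ell$ (e.g.\ $r_\ell=\ell^{-1/2}$) and would not recover the claimed correlation bound. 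All of this can be repaired---indeed by applying the same isotropy identity chaos by chaos---but at that point you are essentially reproducing the paper's one–line argument inside a much heavier framework.
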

		Proposition \ref{correlation1} entails on the contrary that the correlation between the ``local" and ``global" nodal length is zero, in the high frequency limit. The discrepancy between these two results can be heuristically explained as follows: in the case of the torus, local integrals for products of four eigenfunctions have the same form, whatever the centre of the disc on which they are computed (see \cite{BMW}). This is not the case when integral of the products of four spherical harmonics is computed on a disc; this integral has different values depending on the centre of the disc and because of this full correlation cannot be expected. 
		\item In the case of the torus, the full correlation result allows to establish immediately the (nonCentral) Limit Theorem for the nodal length in the shrinking set; indeed, the ``local" limiting distribution is the same as the ``global" one, up to a different scaling constant. On the contrary, to establish a (Central) Limit Theorem for the spherical cap, a different proof is required; indeed we need to apply Theorem 5.2.6 in \cite{Peccati Nourdin} and hence to compute the fourth cumulant of the leading chaos projection of the nodal length.
		In passing we stress that the limiting in distribution is Gaussian in the present framework, while it is a linear combinations of Chi-square random variables in the torus.
		\item In both the manifolds and their subregions, the fourth chaotic component is the leading term of the chaos expansion of the nodal length and the ``Berry's cancellation" phenomenon occurs. However, only in the sphere and in its subdomains, the dominant component is asymptotic to the sample trispectrum, e.g. it has a much simpler form as the integral of the fourth Hermite polynomial, computed only on the eigenfunctions themselves.
	\end{itemize}
	
	\subsection{Plan of the paper}
In Section \ref{Section-ontheproof} we explain the basic ideas for proving the main results of the paper; while the main tools to succeed in our computations are introduced in Section \ref{Section-auxiliary}, where an auxiliary function and its properties and the construction of a smooth approximation of the indicator function are discussed. Chapter \ref{section-proofs} is splitted in two subsections; \ref{section-proofvar} contains the proof of the asymptotic behavior of the variance and \ref{section-proofCLT} proves the Central Limit Theorem. In Section \ref{Section-further} the correlation between ``local" and ``global" nodal length is computed and finally Section \ref{App2point} collects some technical tools exploited in the computations.
	
	\subsection{Some conventions}
	Given a set $F \subset {S}^2,$ we denote its area by $|F|$ and for a smooth curve $C \subset {S}^2, $ $\len(C)$ its length. We will use $A\ll B$ and $A=O(B)$ in the same way. $O_\varphi$ means that the constants involved depend on the function $\varphi$ and they stay bounded when $\varphi$ stays bounded.
	
	\section{On the proof of the main results}\label{Section-ontheproof}
	In this section we give the guideline of the proof of the main results. In the full sphere, it is possible to write the second moment as
	\begin{equation}\label{1}
E[(\mathcal{Z}(T_\ell))^2]= \int_{S^2\times {S}^2} \tilde{K}_{\ell}(x,y) \,dx dy
	\end{equation}
	(see \cite{BSZ} Theorem 2.2, \cite{BSZ2} Theorem 4.3, \cite{35} Proposition 3.3),
	where
	$\tilde{K}_{\ell}(x,y)=\tilde{K}_{\ell}(d(x,y))$ is the two-point correlation function (see Section \ref{App2point}), and the symmetry of the domain implies that, changing coordinates, (\ref{1}) yields
	$$E[(\mathcal{Z}(T_\ell))^2]= 8\pi^2 \int_{0}^{\pi} \tilde{K}_{\ell}(\rho) \sin \rho \,d\rho$$ which allows to handle the computations and to establish the asymptotic behavior of the variance.
	Focussing instead on a subdomain, the lack of this symmetry prevents this change of coordinates. However, using (\ref{linear stat}) and the same argument as in \cite{W} (Proof of Theorem 1.4), it can be shown that for any function $\varphi: {S}^2 \rightarrow {R}$ in $C^1({S}^2)$, we have that
	$${E}[(\mathcal{Z}^{\varphi}(T_\ell))^2]= \int_{{S}^2\times {S}^2} \varphi(x) \varphi(y) \tilde{K}_{\ell}(x,y) \,dx dy.$$
	Now, introducing an auxiliary function $W^{\varphi}:[0,\pi] \rightarrow {R}$ (see also \cite{W}), defined as
	\begin{equation}\label{Wde}
	W^{\varphi}(\rho):=\dfrac{1}{8\pi^2} \int_{d(x,y)=\rho} \varphi(x) \varphi(y) \,dx \,dy \mbox{ } \mbox{ } \mbox{ } x,y \in  {S}^2,
	\end{equation}
	and employing Fubini, we get that
	$${E}[(\mathcal{Z}^{\varphi}(T_\ell))^2]= 8\pi^2 \int_{0}^{\pi} \tilde{K}_{\ell}(\rho) W^{\varphi}(\rho) \,d\rho$$
	with $\tilde{K}_{\ell}(\rho)=\tilde{K}_{\ell}(x,y),$
	$x,y \in {S}^2$ being any pair of points with $d(x,y)=\rho.$
	The crucial observation is that the case of a spherical cap can be cast in this framework, simply taking $\varphi=1_{B_{r_{\ell}}}$, which is a function in $ BV({S}^2) \cap L^\infty({S}^2)$, $\forall \ell$.\\
	More precisely, the key role in the proof of Theorem \ref{varianza nodal} will be played by a sequence of auxiliary functions, $W^{\varphi_\ell}:[0,2r_\ell] \rightarrow {R}$, defined as
	\begin{equation}\label{Wdef}
	W^{\varphi_\ell}(\rho):=\dfrac{1}{8\pi^2} \int_{d(x,y)=\rho} \varphi_\ell(x) \varphi_\ell(y) \,dx \,dy \mbox{ } \mbox{ } \mbox{ } x,y \in  {S}^2;
	\end{equation}
	and using a density argument and approximating $1_{B_{r_\ell}}$ with $C^1$ functions $\varphi_\ell^i$, the second moment could be written as
	$${E}[(\mathcal{Z}^{\varphi^{i}_{\ell}}(T_\ell))^2]= 8\pi^2 \int_{0}^{2r_\ell} \tilde{K}_{\ell}(\rho) W^{\varphi_{\ell}^i}(\rho) \,d\rho.$$ 
	Note that (\ref{Wdef}) is not zero if and only if the variables $x,y $ are inside the spherical cap $B_{r_\ell}$, hence the maximum distance allowed between two points to make (\ref{Wdef}) different from zero is $\rho=2r_{\ell}$.
	For $\varphi_{\ell}=1_{B_{r_{\ell}}}$ and for $x,y \in B_{r_\ell}$, (\ref{Wdef}) can be written also as
	$$W^{\varphi_{\ell}}(\rho)= \dfrac{1}{8\pi^2} \int_{B_{r_{\ell}}} \len\{ y \in B_{r_{\ell}}: d(x,y)=\rho \} \,dx.$$ Then, if we fix $x$ ``far" from the boundary, the integrand will be given by $\len \{y \in B_{r_\ell}: d(x,y)=\rho\} =2\pi \sin \rho;$ note that,
	however, $W^{\varphi_{\ell}}$ depends on the position of $x$. Moreover,  for decreasing sequence $r_{\ell}$ a tangent plane approximation can be shown to hold, whence, we can also define the function
	$\tilde{W}_{\tilde{\varphi}_\ell}: [0,2r_\ell] \rightarrow {R}$ as
	\begin{equation}\label{Wtilde}
	\tilde{W}_{\tilde{\varphi_\ell}}(\rho):=\dfrac{1}{8\pi^2}\int_{d(x,y)=\rho} \tilde{\varphi_\ell}(x) \tilde{\varphi_\ell}(y) dx dy \mbox{ } \mbox{ } \mbox{ } x,y \in {R}^2,
	\end{equation}
	where $\tilde{\varphi}_\ell$ is given by the composition $\varphi_{\ell} \circ \exp$ and $\exp$ is the exponential map. Note that $\tilde{W}_{\tilde{\varphi}_{\ell}}$ is nonzero if $x,y \in \tilde{B}_{r_\ell}$, which is the disc contained in ${R}^2$ of radius $r_\ell$ and centered in the origin of the axes. In order to scale the support of $\tilde{\varphi}_\ell$ from $\tilde{B}_{r_\ell}$ in $\tilde{B}_1$, we define also
	\begin{equation}\label{W_1}
	\tilde{W_1} \bigg(\rho \dfrac{1}{r_\ell} \bigg):= \frac{1}{8\pi^2}\int_{d(x,y)=\frac{\rho}{r_\ell}}\tilde{\varphi_\ell}(r_\ell x) \tilde{\varphi_\ell}(r_\ell y) \,dx\,dy \mbox{ }\mbox{ }\mbox{ } x,y \in {R}^2.
	\end{equation}
	Denoting $W_{r_\ell}(\rho):=W^{1_{B_{r_\ell}}}(\rho)$ (e.g. $\varphi_\ell=1_{B_{r_{\ell}}}$), it is easy to check the validity of the asymptotic relation below:
	\begin{equation}\label{1111}
	W_{r_{\ell}}(\rho)=r_{\ell}^3 \tilde{W_{1}}\left(\rho \frac{1}{r_{\ell}}\right)(1+O(\rho^2)),
	\end{equation}
	as $r_{\ell} \rightarrow 0 $ uniformly in $\rho$ (see Lemma B.3 in the supplement article \cite{supp}).\\
	
	Hence, as we said before, in order to prove Theorem \ref{varianza nodal} we want to apply a standard approximation argument; approximating the characteristic function $1_{B_{r_\ell}}$ with a sequence of $C^1$ function for which we can apply the following Proposition \ref{var for cont func}.
			\begin{prop}\label{var for cont func} Let $\varphi_\ell^i$ be a sequence of $C^1$ functions satisfying (\ref{properties}) and let define $\tilde{\varphi}_\ell^i(x):= \tilde{\varphi}_\ell^i(r_\ell x)=\varphi_\ell^i \circ  \exp(r_\ell x)$. Then, as $\ell \rightarrow \infty,$
			the variance $\Var(\mathcal{Z}^{\varphi_{\ell}^i}(T_\ell))$ is given by
			\begin{equation}\label{var smooth}
			\Var(\mathcal{Z}^{{\varphi}_{\ell}^i}(T_\ell))=\dfrac{||\tilde{\varphi}_\ell^i||_{L^2(\tilde{B}_1)}^2}{256\pi} \cdot r_\ell^2 \log(r_\ell \ell)+O_{||\tilde{\varphi}_\ell^i||_{\infty},V(\tilde{\varphi}_\ell^i)}(r_\ell^2),
			\end{equation}
			denoting $V(\varphi)$ the total variation of a test function $\varphi$.
		\end{prop}
	The computations of the variance in Proposition \ref{var for cont func} will follow from the analysis of the integral of the two-point correlation function and $W^{\tilde{\varphi}_\ell^i}$; the main contribution will actually be given from points far from the diagonal $x=y$.\\
	To take the limit in (\ref{var smooth}) and obtain the result in Theorem \ref{varianza nodal}, we need to check that if $\varphi_{\ell}^i$ approximates $1_{B_{r_\ell}}$, as $i \rightarrow \infty$, the corresponding statement holds for the random variables $\mathcal{Z}^{\varphi_{{\ell}}^i}, \mathcal{Z}_{\ell,r_\ell}$  and their variance.	It is easy to see that, if $\varphi^i \rightarrow \varphi$ in $L^1({S}^2)$, then for every fixed $\ell$, we also have 
	\begin{equation}
	{E}[\mathcal{Z}^{\varphi^i}(T_\ell)] \rightarrow {E}[\mathcal{Z}^{\varphi}(T_\ell)];
	\end{equation} 
	indeed, it follows from the expected value of a linear statistic,
		\begin{equation}\label{media}
		{E}[\mathcal{Z}^{\varphi}(T_\ell)]=\dfrac{\int_{{{S}^2}} \varphi(x)\,dx}{2^{3/2}} \sqrt{\ell(\ell+1)}
		\end{equation}
		(\cite{W} Proposition 1.4, starting from
		(121)). We will see that the analogous result holds for the variance in view of Proposition \ref{prop5.1}.

	\begin{prop}\label{prop5.1}
		We have that, as $\ell \rightarrow \infty,$
			$${E}[\mathcal{Z}^{\varphi_{\ell}^i}(T_\ell)^2]=O( \ell r_{\ell}^4 ||\tilde{\varphi}_\ell^i||_{L^1(\tilde{B}_1)}||\tilde{\varphi}_\ell^i||_{\infty}).$$
\end{prop}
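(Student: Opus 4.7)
My plan is to start from the integral representation
\[
\mathbb{E}[(\mathcal{Z}^{\varphi_\ell^i}(T_\ell))^2] = 8\pi^2 \int_0^{2r_\ell} \tilde{K}_\ell(\rho)\, W^{\varphi_\ell^i}(\rho)\,d\rho,
\]
recalled earlier in the excerpt for smooth $\varphi_\ell^i$, and combine crude pointwise upper bounds on the two factors. The dominant contribution $\ell r_\ell^4 \|\tilde{\varphi}^i\|_\infty \|\tilde{\varphi}^i\|_{L^1(\tilde{B}_1)}$ should arise from integrating a uniform $\tilde{K}_\ell=O(\ell)$ estimate against a linear-in-$\rho$ bound for $W^{\varphi_\ell^i}$ over the bulk of $[0,2r_\ell]$, while the subleading $r_\ell^2/\ell$ term emerges from a more careful analysis of the microscopic window $\rho\lesssim 1/\ell$, where the $O(\rho^2)$ remainder in the scaling identity interacts with the fine structure of $\tilde{K}_\ell$ near the diagonal.

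\textbf{Bounding the two factors.} For $W^{\varphi_\ell^i}$, I would exploit the scaling identity (\ref{1111}), namely $W^{\varphi_\ell^i}(\rho) = r_\ell^3 \tilde{W}_1(\rho/r_\ell)(1+O(\rho^2))$, reducing the problem to bounding $\tilde{W}_1$. Writing the defining formula (\ref{W_1}) via the coarea identity as
\[
\tilde{W}_1(u) = \frac{1}{8\pi^2} \int_{\tilde{B}_1} \tilde{\varphi}^i(x)\!\left(\int_{\partial B(x,u)\cap \tilde{B}_1} \tilde{\varphi}^i(y)\, d\mathcal{H}^1(y)\right)\! dx,
\]
and estimating the inner length integral by $2\pi u\,\|\tilde{\varphi}^i\|_{\infty}$ yields $\tilde{W}_1(u) \lesssim u\,\|\tilde{\varphi}^i\|_{\infty} \|\tilde{\varphi}^i\|_{L^1(\tilde{B}_1)}$, and hence $W^{\varphi_\ell^i}(\rho) \lesssim r_\ell^2 \rho\,\|\tilde{\varphi}^i\|_{\infty}\|\tilde{\varphi}^i\|_{L^1(\tilde{B}_1)}$. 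For the Kac-Rice two-point density I would invoke the standard uniform bound $\tilde{K}_\ell(\rho) = O(\ell)$ on $(0,2r_\ell]$, obtained via a Taylor expansion of the joint covariance matrix of the vector $(T_\ell(x), T_\ell(y), \nabla T_\ell(x), \nabla T_\ell(y))$ in $\rho$, along the same lines as in \cite{35} and \cite{W}.

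\textbf{Combining and main obstacle.} Multiplying the two pointwise bounds and integrating produces
\[
\mathbb{E}[(\mathcal{Z}^{\varphi_\ell^i}(T_\ell))^2] \lesssim \ell\,\|\tilde{\varphi}^i\|_{\infty}\|\tilde{\varphi}^i\|_{L^1(\tilde{B}_1)} \int_0^{2r_\ell} r_\ell^2 \rho\,d\rho \lesssim \ell r_\ell^4 \|\tilde{\varphi}^i\|_{\infty}\|\tilde{\varphi}^i\|_{L^1(\tilde{B}_1)},
\]
recovering the leading term in the statement. The secondary $r_\ell^2/\ell$ contribution is then extracted by revisiting the diagonal region $\rho\in[0,1/\ell]$, where the remainder in (\ref{1111}) is no longer dominated by the principal scaling factor and a sharper cancellation in the Kac-Rice density must be tracked. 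The main technical obstacle is precisely the uniform $\tilde{K}_\ell=O(\ell)$ estimate in this microscopic window: as $\rho\to 0$ the joint Gaussian density $p_{T_\ell(x),T_\ell(y)}(0,0)$ blows up like $1/(\ell\rho)$, and only the exact compensating vanishing of the conditional expectation of $\|\nabla T_\ell(x)\|\|\nabla T_\ell(y)\|$ given $T_\ell(x)=T_\ell(y)=0$ keeps the product bounded. Quantifying this Berry-type cancellation through the Taylor expansion of the conditional covariance structure is the only nontrivial ingredient; once it is in place, the rest of the argument reduces to the direct double-integral estimate above.
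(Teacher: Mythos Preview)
The uniform bound $\tilde{K}_\ell(\rho)=O(\ell)$ on $(0,2r_\ell]$ that you invoke is incorrect, and this is the essential gap. Recall from the appendix that $\tilde{K}_\ell(\rho)=\frac{\ell(\ell+1)}{2}K_\ell(L\rho)$ with $K_\ell$ the scaled kernel; Proposition~\ref{2-point corr} gives $K_\ell(\psi)\to\frac14$ for $\psi$ bounded away from $0$, so already for any fixed $\rho$ one has $\tilde{K}_\ell(\rho)\sim\frac{\ell(\ell+1)}{8}$, genuinely of order $\ell^2$. Your direct integration therefore produces $O(\ell^2 r_\ell^4\|\tilde{\varphi}^i\|_\infty\|\tilde{\varphi}^i\|_{L^1})$ rather than $O(\ell r_\ell^4\cdots)$. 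In fact the printed statement is off by a factor of $\ell$: the paper's own proof bounds only the $\psi$-integral by $Lr_\ell^4\|\cdot\|_\infty\|\cdot\|_{L^1}+\frac{r_\ell^2}{L}\|\cdot\|_\infty\|\cdot\|_{L^1}$, and after multiplication by the prefactor $\frac{\pi|\mathbb{S}^2|\ell(\ell+1)}{L}\sim\ell$ one obtains exactly the $\ell^2 r_\ell^4$ and $r_\ell^2$ that are actually used when the proposition is applied in the proof of Theorem~\ref{varianza nodal}. So with the corrected bound your one-shot integration does recover the right answer.

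Your treatment of the secondary term is also off target. No ``Berry-type cancellation'' enters here, and the microscopic window carries no special difficulty. The paper simply changes variables to $\psi=L\rho$, splits the integral at a fixed $\psi=C$, and invokes the crude bounds $|K_\ell(\psi)|=O_C(1)$ for $\psi>C$ and $|K_\ell(\psi)|=O(1/\psi)$ for $0<\psi<C$, both quoted directly from \cite{W}, together with $|W^{\varphi_\ell^i}(\psi/L)|\lesssim \frac{r_\ell^2}{L}\psi\,\|\tilde{\varphi}^i\|_\infty\|\tilde{\varphi}^i\|_{L^1}$. The near-diagonal piece is then automatically dominated by the far piece (since $r_\ell\ell\to\infty$ forces $\ell^2 r_\ell^4\gg r_\ell^2$), so there is nothing to ``extract''; the two terms are merely recorded separately. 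The compensation you describe between the degenerating joint density and the conditional gradient moment is real, but it is precisely what is packaged in the cited $|K_\ell(\psi)|=O(1/\psi)$ estimate and needs no further work for this proposition.
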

Another question is that, when applying Proposition \ref{var for cont func} for $\varphi_\ell^i$, one needs to control the error term in (\ref{var smooth}) (which may a priori depend on $\varphi_{\ell}^i$). Since we manage to control it in terms of its $L^\infty$ norm and total variation, we can solve this issue requiring $\varphi_\ell^i$ to be essentially uniformly bounded and having uniformly bounded total variation.\\\\
	The next step will be the derivation of the Central Limit Theorem, stated in Theorem \ref{CLT}. To this aim, we will start following a similar argument as in \cite{MRW}; more precisely we define first the sequence of centered random variables (``local sample trispectrum")
	\begin{equation}\label{trispectrum}
	\mathcal{M}_{\ell,r_{\ell}}:= -\dfrac{1}{4}\sqrt{\dfrac{\ell(\ell+1)}{2}}\dfrac{1}{4!} \int_{B_{r_{\ell}}} H_4(T_\ell(x)) \,dx= -\dfrac{1}{4}\sqrt{\dfrac{\ell(\ell+1)}{2}}\dfrac{1}{4!} h_{\ell,r_{\ell};4}
	\end{equation}
	where for $\ell=1,2,\dots$, 
	\begin{equation}\label{h4}
	h_{\ell,r_{\ell};4}:=\int_{B_{r_{\ell}}} H_4(T_\ell(x)) \,dx.
	\end{equation}
	The key idea is to prove the asymptotic full correlation between the ``local" nodal length and the ``local sample trispectrum":
	\begin{prop}\label{corr}
		The correlation between $\mathcal{Z}_{\ell,r_{\ell}}$ and $\mathcal{M}_{\ell,r_{\ell}}$, in the high energy limit $\ell \rightarrow \infty$, is given by
		\begin{equation}\label{corre}
		\Corr(\mathcal{Z}_{\ell,r_{\ell}};\mathcal{M}_{\ell,r_{\ell}})=1+O\bigg(\dfrac{1}{\log r_\ell \ell}\bigg)=1+o(1).
		\end{equation}
	\end{prop}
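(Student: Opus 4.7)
The natural route is the standard Wiener-chaos decomposition. Write
\[
\mathcal{Z}_{\ell,r_\ell} - \mathbb{E}[\mathcal{Z}_{\ell,r_\ell}] = \sum_{q \ge 1} \mathrm{Proj}[\mathcal{Z}_{\ell,r_\ell}\,|\,q],
\]
note that odd-order chaoses vanish by the symmetry $T_\ell \overset{d}{=} -T_\ell$, and aim to show that only $\mathrm{Proj}[\mathcal{Z}_{\ell,r_\ell}\,|\,4]$ carries the full logarithmic variance of Theorem~\ref{varianza nodal}, being asymptotically equal to $\mathcal{M}_{\ell,r_\ell}$ in $L^2(\mathbb{P})$. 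If this identification holds with a remainder of variance $O(r_\ell^2)$, then $\Cov(\mathcal{Z}_{\ell,r_\ell},\mathcal{M}_{\ell,r_\ell}) = \Var(\mathcal{M}_{\ell,r_\ell}) + O(r_\ell^2)$, and combined with the matching asymptotic $\Var(\mathcal{M}_{\ell,r_\ell}) \sim \Var(\mathcal{Z}_{\ell,r_\ell}) \sim \tfrac{1}{256}\, r_\ell^2 \log(r_\ell\ell)$ this yields exactly the rate $1+O(1/\log(r_\ell\ell))$ claimed in (\ref{corre}).

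To identify the fourth chaotic projection I would start from the approximate Kac--Rice representation
\[
\mathcal{Z}_{\ell,r_\ell} = \lim_{\eps \to 0} \frac{1}{2\eps} \int_{B_{r_\ell}} \mathbf{1}_{\{|T_\ell(x)| \le \eps\}}\,\|\nabla T_\ell(x)\|\,dx,
\]
expand the indicator in Hermite polynomials in $T_\ell(x)$ and the Jacobian in Hermite polynomials in the two normalised orthonormal tangential derivatives $\partial_i T_\ell(x)/\sqrt{\lambda_\ell/2}$, and retain terms of total Hermite rank four. This produces a finite linear combination of integrals over $B_{r_\ell}$ of $H_4(T_\ell)$, of mixed products $H_2(T_\ell)\,H_2(\partial_i T_\ell/\sqrt{\lambda_\ell/2})$, and of pure-gradient pieces $H_2(\partial_i T_\ell/\sqrt{\lambda_\ell/2})\,H_2(\partial_j T_\ell/\sqrt{\lambda_\ell/2})$. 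The crucial algebraic step, carried out on the closed sphere in \cite{MRW}, uses $\Delta_{\mathbb{S}^2} T_\ell = -\lambda_\ell T_\ell$ together with integration by parts to rewrite these gradient contributions in terms of $H_4(T_\ell)$ alone: on the full sphere the boundary vanishes, whereas on $B_{r_\ell}$ it survives as line integrals along $\partial B_{r_\ell}$.

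Controlling these boundary integrals, and in parallel bounding $\mathrm{Proj}[\mathcal{Z}_{\ell,r_\ell}\,|\,2]$ and the chaoses of order $\ge 6$, is where the real work lies and constitutes the main obstacle, since these terms vanish identically in the full-sphere case and must now be estimated directly. For the boundary remainders I would use $L^2$ bounds on $T_\ell$ and its tangential derivatives along a curve of length $O(r_\ell)$ to obtain an $L^2(\mathbb{P})$ bound of order $O(r_\ell)$, hence variance $O(r_\ell^2)$. The second-chaos projection I would write out as an explicit linear combination of $\int_{B_{r_\ell}} H_2(T_\ell)\,dx$ and $\int_{B_{r_\ell}} H_2(\partial_i T_\ell/\sqrt{\lambda_\ell/2})\,dx$; its variance can be estimated by integrating products of $P_\ell$ and $P_\ell'$ against the weight $W^{\varphi_\ell}(\rho)$ of (\ref{Wdef}), and after the Hilb oscillations average out it yields only $O(r_\ell^2)$, without logarithmic enhancement. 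Higher chaoses ($q \ge 6$) are then controlled in bulk via the Parseval identity $\sum_{q \ge 2} \Var(\mathrm{Proj}[\mathcal{Z}_{\ell,r_\ell}|q]) = \Var(\mathcal{Z}_{\ell,r_\ell})$ once the logarithmic lower bound for $q=4$ is secured. Finally, a direct second-moment computation using $\mathbb{E}[H_4(T_\ell(x))H_4(T_\ell(y))] = 24\,P_\ell(\cos d(x,y))^4$ and the Hilb asymptotics for $P_\ell$ gives $\Var(\mathcal{M}_{\ell,r_\ell}) = \tfrac{1}{256}\, r_\ell^2 \log(r_\ell\ell) + O(r_\ell^2)$, completing the correlation estimate.
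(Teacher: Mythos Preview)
Your overall architecture---reduce the correlation computation to matching asymptotics for $\Var(\mathcal{Z}_{\ell,r_\ell})$, $\Var(\mathcal{M}_{\ell,r_\ell})$ and $\Cov(\mathcal{Z}_{\ell,r_\ell},\mathcal{M}_{\ell,r_\ell})$---is the same as the paper's, and your endpoint computation of $\Var(\mathcal{M}_{\ell,r_\ell})$ via $\mathbb{E}[H_4 H_4]=24\,P_\ell^4$ and Hilb is exactly what the paper does (Proposition~\ref{lemmasmooth2}). The treatment of the second chaos via Green's identity and boundary control is also in the paper (Lemma~\ref{2chaos}). So large parts of the plan are right.

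The gap is in the step you flag as ``crucial'': the claim that the gradient pieces of $\mathrm{Proj}[\mathcal{Z}_{\ell,r_\ell}\,|\,4]$ can be rewritten as $\int_{B_{r_\ell}} H_4(T_\ell)$ plus boundary terms, using $\Delta T_\ell=-\lambda_\ell T_\ell$ and integration by parts. This works for the \emph{second} chaos, where $\int |\nabla T_\ell|^2 = \lambda_\ell \int T_\ell^2 + \text{boundary}$, but it does not close at fourth order: terms like $\int (\tilde\partial_i T_\ell)^4$ or $\int H_2(T_\ell)H_2(\tilde\partial_i T_\ell)$ do not reduce by one Green identity to polynomials in $T_\ell$ alone---you pick up second derivatives and the algebra does not terminate. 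This is also not what \cite{MRW} does on the full sphere: there, as here, the covariance $\Cov(\mathcal{Z},\mathcal{M})$ is computed \emph{directly} via the two-point cross-correlation function $J_\ell(\psi,4)$ of Proposition~\ref{2-point cross corr}, integrated against the weight $W^{\varphi_\ell}$ (Lemma~\ref{lemmasmooth} in the paper). The asymptotic equivalence $\mathrm{Proj}[\mathcal{Z}\,|\,4]\sim\mathcal{M}$ is then a \emph{consequence} of the full-correlation result, not an input to it.

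A secondary issue: even granting $\mathrm{Proj}[\mathcal{Z}_{\ell,r_\ell}\,|\,4]=\mathcal{M}_{\ell,r_\ell}+R$ with $\Var(R)=O(r_\ell^2)$, Cauchy--Schwarz only gives $\Cov(R,\mathcal{M}_{\ell,r_\ell})=O(r_\ell^2\sqrt{\log(r_\ell\ell)})$, which yields correlation $1+O(1/\sqrt{\log(r_\ell\ell)})$ rather than the sharper rate in (\ref{corre}); to get the $O(1/\log)$ rate you need the covariance itself to order $O(r_\ell^2)$, which again pushes you toward the paper's direct computation.
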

	This result requires the evaluation of the variance of $\mathcal{M}_{\ell,r_\ell}$.
	\begin{prop}\label{lemma2.3}
		The variance of $\mathcal{M}_{\ell,r_{\ell}}$ is, 	as $\ell \rightarrow \infty$, given by
		$$\Var[\mathcal{M}_{\ell,r_{\ell}}]=\dfrac{1}{256}r_{\ell}^2 \log r_{\ell} \ell+O(r_{\ell}^2).$$
	\end{prop}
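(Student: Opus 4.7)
The plan is to reduce $\Var(\mathcal{M}_{\ell,r_\ell})$ to a one-dimensional oscillatory integral that can then be handled with Hilb's asymptotic. Since $T_\ell$ is a unit-variance Gaussian field with covariance $P_\ell(\cos d(x,y))$, the diagram/product formula for Hermite polynomials immediately gives $\mathbb{E}[H_4(T_\ell(x)) H_4(T_\ell(y))] = 4!\cdot P_\ell(\cos d(x,y))^4$, so that by the definition (\ref{trispectrum}) of $\mathcal{M}_{\ell,r_\ell}$,
$$\Var(\mathcal{M}_{\ell,r_\ell}) = \frac{\ell(\ell+1)}{2 \cdot 16 \cdot 24}\int_{B_{r_\ell} \times B_{r_\ell}} P_\ell(\cos d(x,y))^4\, dx\, dy.$$
Following the approach outlined in Section~2, I would use the auxiliary function $W_{r_\ell}(\rho) = W^{1_{B_{r_\ell}}}(\rho)$ of (\ref{Wdef}) together with Fubini to collapse the double integral into $8\pi^2 \int_0^{2r_\ell} P_\ell(\cos\rho)^4\, W_{r_\ell}(\rho)\, d\rho$. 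The scaling relation (\ref{1111}) coming from Lemma~\ref{lem} then replaces $W_{r_\ell}(\rho)$ by $r_\ell^3\, \tilde{W}_1(\rho/r_\ell)$, up to a $(1 + O(\rho^2))$ multiplicative error that contributes only $O(r_\ell^2)$ to the final variance.

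The core analytic step is to evaluate the resulting oscillatory integral by splitting it at $\rho_0 = 1/\ell$. On $(0, 1/\ell)$, the bound $|P_\ell|\leq 1$ together with the near-origin behaviour $\tilde{W}_1(t) = t/4 + O(t^2)$ (which holds because, on the Euclidean unit disc, the length of the chord circle is $2\pi t$ for $x$ away from the boundary, with an annular exceptional set of area $O(t)$) yield a contribution absorbed into the $O(r_\ell^2)$ error. On $(1/\ell, 2r_\ell)$, Hilb's asymptotic
$$P_\ell(\cos\rho) = \sqrt{\rho/\sin\rho}\, J_0((\ell+1/2)\rho) + O(\ell^{-3/2})$$
together with the large-argument Bessel expansion $J_0(u) = \sqrt{2/(\pi u)}\,\cos(u - \pi/4) + O(u^{-3/2})$ gives
$$P_\ell(\cos\rho)^4 = \frac{4\cos^4((\ell+1/2)\rho - \pi/4)}{\pi^2 \ell^2 \rho^2}(1 + o(1)).$$
Expanding $\cos^4\theta = \tfrac{3}{8} + \tfrac{1}{2}\cos(2\theta) + \tfrac{1}{8}\cos(4\theta)$, the oscillating pieces integrate to $O(1)$ after one integration-by-parts against the slowly varying weight $\tilde{W}_1(\rho/r_\ell)/\rho^2$, while the constant $\tfrac{3}{8}$, combined with $\tilde{W}_1(\rho/r_\ell) \sim \rho/(4 r_\ell)$ and $\int_{1/\ell}^{2r_\ell} d\rho/\rho = \log(r_\ell \ell) + O(1)$, produces the logarithmic divergence.

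Assembling constants one obtains
$$\Var(\mathcal{M}_{\ell,r_\ell}) = \frac{\ell(\ell+1)}{768}\cdot 8\pi^2 \cdot \frac{3\, r_\ell^2}{8\pi^2 \ell^2}\log(r_\ell \ell) + O(r_\ell^2) = \frac{r_\ell^2}{256}\log(r_\ell \ell) + O(r_\ell^2),$$
matching the statement of Proposition~\ref{lemma2.3}. I expect the main obstacle to be the careful bookkeeping of all error terms: the Hilb remainder $O(\ell^{-3/2})$ raised effectively to the fourth power when expanding $P_\ell^4$, the scaling error $O(\rho^2)$ in (\ref{1111}), and the boundary of the unit disc where $\tilde{W}_1$ is only of bounded variation (so the oscillatory integration-by-parts must be performed in a Riemann--Stieltjes sense). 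All four contributions must be shown to be $O(r_\ell^2)$, no larger than the claimed error, and the split at $\rho_0 = 1/\ell$ must be matched so that the extremes of integration produce only $O(1)$ boundary terms in the logarithmic integral.
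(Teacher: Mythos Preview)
Your argument is correct and arrives at the right constant, but it takes a route that is genuinely different from the paper's. The paper does \emph{not} compute $\Var(\mathcal{M}_{\ell,r_\ell})$ directly for the indicator $1_{B_{r_\ell}}$; instead it first proves the analogous statement for the smooth approximants $\varphi_\ell^i$ (Proposition~\ref{lemmasmooth2}), where $\tilde{W}_0^{\tilde\varphi^i}$ is $C^1$ and can be Taylor-expanded at $0$ to extract the factor $2\pi\|\tilde\varphi^i\|_{L^2}^2$, and then passes to the limit $i\to\infty$ via an $L^2$ bound on $\mathbb{E}[(\mathcal{M}^{\varphi_\ell^i})^2]$ (Proposition~\ref{propM}). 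The paper also quotes the ready-made expansion (\ref{pl4}) for $P_\ell(\cos(\psi/L))^4$ from \cite{W} rather than deriving it from Hilb and the $\cos^4$ identity as you do, and it splits the $\psi$-integral at a fixed constant $C$ rather than at a scale depending on $\ell$.

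Your approach is more direct: by exploiting the explicit small-$t$ behaviour $\tilde W_1(t)=t/4+O(t^2)$ of the indicator's auxiliary function and handling the oscillatory pieces by integration by parts against a BV weight, you bypass the smooth-approximation machinery entirely. This is shorter and more self-contained for this particular proposition, though the paper's detour through $C^\infty$ test functions is what lets it treat $\Var(\mathcal{Z}_{\ell,r_\ell})$, $\Cov(\mathcal{Z}_{\ell,r_\ell},\mathcal{M}_{\ell,r_\ell})$ and $\Var(\mathcal{M}_{\ell,r_\ell})$ in a uniform way. One small remark: for the indicator on the unit disc, $\tilde W_1$ is in fact Lipschitz (not merely BV), so ordinary integration by parts already suffices for your oscillatory estimates and the Riemann--Stieltjes caveat is not really needed.
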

	The strategy of the proof is the same as for the variance of $\mathcal{Z}_{\ell,r_{\ell}}$; hence, for $\varphi_\ell^i$ a sequence of $C^1$ functions satisfying (\ref{properties}), we define the sequence of centered random variables
	\begin{equation}\label{smoothtrisp}
	\mathcal{M}^{\varphi_{{\ell}}^i}:=-\dfrac{1}{4}\sqrt{\frac{\ell(\ell+1)}{2}}\frac{1}{4!} \int_{{S}^2} \varphi_{{\ell}}^i(y) H_4(T_\ell(y))\,dy
	\end{equation}
 and we prove the following propositions.
	
	\begin{prop}\label{lemmasmooth2}
		The variance of $\mathcal{M}^{\varphi_{\ell}^i}$, as $\ell \rightarrow \infty$, is given by
		\begin{equation}\label{M}
		\Var[\mathcal{M}^{\varphi_{\ell}^i}]=\dfrac{||\tilde{\varphi}_\ell^i||_{L^2(\tilde{B}_1)}^2}{256\pi}r_{\ell}^2 \log r_{\ell} \ell+O_{||\tilde{\varphi}_\ell^i||_{\infty},V(\tilde{\varphi}_\ell^i)}(r_{\ell}^2).
		\end{equation}
	\end{prop}
	\begin{prop}\label{propM}
		We have that, as $\ell \rightarrow \infty$,
$${E}[\mathcal{M}^{\varphi_{\ell}^i}(T_\ell)^2]=O\bigg(r_{\ell}^2\log(r_{\ell}  \ell )||\tilde{\varphi}_\ell^i||_{\infty}||\tilde{\varphi}_\ell^i||_{L^1(\tilde{B}_1)} \bigg).$$
	\end{prop}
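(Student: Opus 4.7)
The plan is to write the second moment of $\mathcal{M}^{\varphi_\ell^i}(T_\ell)$ as a one-dimensional integral involving the fourth power of the Legendre kernel and the auxiliary function $W^{\varphi_\ell^i}$ from (\ref{Wdef}), and then estimate this integral via the tangent-plane rescaling (\ref{1111}) combined with the standard Hilb-type asymptotics for Legendre polynomials.

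First I would use the fact that $T_\ell$ is a centered unit-variance Gaussian field with covariance $\mathbb{E}[T_\ell(x)T_\ell(y)]=P_\ell(\cos d(x,y))$, so that the diagram/Wick formula for products of the fourth Hermite polynomial gives
\[
\mathbb{E}[H_4(T_\ell(x))\,H_4(T_\ell(y))]=4!\,P_\ell(\cos d(x,y))^4.
\]
Plugging this into the definition of $\mathcal{M}^{\varphi_\ell^i}(T_\ell)$ (the natural extension of (\ref{trispectrum}) with $1_{B_{r_\ell}}$ replaced by $\varphi_\ell^i$), and applying Fubini together with the coarea decomposition encoded by (\ref{Wdef}), one obtains
\[
\mathbb{E}[\mathcal{M}^{\varphi_\ell^i}(T_\ell)^2]
= c\,\ell(\ell+1)\int_0^{2r_\ell} P_\ell(\cos\rho)^4\,W^{\varphi_\ell^i}(\rho)\,d\rho,
\]
with $c>0$ a universal constant.

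Second, I would invoke the tangent-plane approximation, i.e.\ the analog of Lemma \ref{lem} for the smooth approximations, $W^{\varphi_\ell^i}(\rho) = r_\ell^{3}\,\tilde W_1^{\tilde\varphi^i}(\rho/r_\ell)\,(1+O(\rho^2))$ uniformly in $\rho\in[0,2r_\ell]$, and change variables $\rho=r_\ell u$ to get
\[
\mathbb{E}[\mathcal{M}^{\varphi_\ell^i}(T_\ell)^2]\;\ll\;\ell^2 r_\ell^{4}\int_0^{2} P_\ell(\cos(r_\ell u))^4\,\tilde W_1^{\tilde\varphi^i}(u)\,du.
\]
Polar coordinates applied to (\ref{W_1}) give the pointwise bound $\tilde W_1^{\tilde\varphi^i}(u) \le \frac{u}{4\pi}\,\|\tilde\varphi^i\|_{\infty}\|\tilde\varphi^i\|_{L^1(\tilde B_1)}$, and Hilb's asymptotic yields $P_\ell(\cos\theta)^4 \ll \min(1,(\ell\sin\theta)^{-2})$. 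Splitting the $u$-integral at the natural threshold $u_0:=1/(\ell r_\ell)$, on $[0,u_0]$ the bounds $P_\ell^4\le 1$ and $\tilde W_1^{\tilde\varphi^i}(u)\ll u\,\|\tilde\varphi^i\|_\infty\|\tilde\varphi^i\|_{L^1(\tilde B_1)}$ give a contribution of order $u_0^{2}\,\|\tilde\varphi^i\|_\infty\|\tilde\varphi^i\|_{L^1(\tilde B_1)}$, which after multiplication by $\ell^2 r_\ell^{4}$ produces the first summand $r_\ell^{2}\,\|\tilde\varphi^i\|_{\infty}\|\tilde\varphi^i\|_{L^1(\tilde B_1)}$. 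On $[u_0,2]$ the integrand is bounded by $(\ell r_\ell)^{-2}u^{-1}\|\tilde\varphi^i\|_\infty\|\tilde\varphi^i\|_{L^1(\tilde B_1)}$, so integration contributes a factor $\log(2\ell r_\ell)$; multiplied by $\ell^2 r_\ell^{4}$ this gives the second summand $r_\ell^{2}\log(r_\ell\ell)\,\|\tilde\varphi^i\|_{\infty}\|\tilde\varphi^i\|_{L^1(\tilde B_1)}$. Adding the two regimes yields the claim.

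The main obstacle I foresee is the rigorous justification of the rescaling step, namely the validity of the analog of (\ref{1111}) for the smooth approximations $\varphi_\ell^i$ with an error $(1+O(\rho^2))$ that is uniform in $\rho$ and controlled only in terms of $\|\tilde\varphi^i\|_\infty$ and the total variation $V(\tilde\varphi^i)$; one must ensure that the residual contributions coming from this error do not inflate the leading bound. A secondary technical point is keeping the constants in the Hilb estimate free of $\ell$ and $r_\ell$ on the full range $r_\ell u\in[0,2r_\ell]$, which is uniformly away from $\pi$ thanks to $r_\ell\to 0$ (so $\sin(r_\ell u)\asymp r_\ell u$), making the splitting at $u_0=1/(\ell r_\ell)$ the natural one dictated by (\ref{cond su r}).
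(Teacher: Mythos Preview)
Your proposal is correct and follows essentially the same route as the paper: both express the second moment as $\ell(\ell+1)$ times a one-dimensional integral of $P_\ell^4$ against $W^{\varphi_\ell^i}$, invoke the tangent-plane rescaling (the paper uses precisely (\ref{Rel1}) for the smooth approximations, which resolves your stated obstacle), bound $\tilde W$ by $\|\tilde\varphi^i\|_\infty\|\tilde\varphi^i\|_{L^1}$ via (\ref{Rel3}), and split the integral into a short-range part (trivially bounding $P_\ell^4\le 1$) and a long-range part (using Hilb's $P_\ell(\cos\theta)^4\ll (\ell\sin\theta)^{-2}$). The only cosmetic difference is the change of variable: the paper sets $\rho=\psi/L$ and splits at a fixed constant $C$, whereas you set $\rho=r_\ell u$ and split at $u_0=1/(\ell r_\ell)$; these are equivalent since $u_0$ in your variable corresponds to $\psi\asymp 1$ in the paper's.
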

	In view of the orthogonality of the projections, the result in (\ref{corre}) implies that the fourth chaotic component is the leading term of the chaos expansion of $\mathcal{Z}_{\ell,r_\ell}$ and hence it is sufficient to study its asymptotic behavior. In particular, exploiting the Stein-Malliavin approach (see \cite{Peccati Nourdin}), it is enough to focus on the behavior of their fourth order cumulant (\cite{Peccati Nourdin}, Theorem 5.2.7). Here, it is important to note that our argument is quite different from the proof given by \cite{MRW}; in particular, in the full sphere the behavior of the fourth-order cumulant was already established by means of Clebsch-Gordan coefficients: the latter cannot be used here due to the lack of analogous explicit results on subdomains. Hence, we derive efficient bounds by a careful exploitation of Hilb's asymptotics for powers of Legendre polynomials.\\\\
	From now on we will denote with $B_r \subset {S}^2$ the ball of radius $r$, $0<r<\pi$ centered in $N$ and with $\tilde{B}_{r}$ the disc of radius $r$ in ${R}^2$.
	
	\section{Auxiliary functions}\label{Section-auxiliary}
	In this section we introduce the auxiliary functions, announced in Section \ref{Section-ontheproof}, involved into the proofs of our main results. \\
	The indicator function $1_{B_{r_\ell}}$ belongs to the space $BV({S}^2) \cap L^{\infty}({S}^2)$; to make some computations easier, it is more convenient to deal with continuously differentiable functions.
	In order to control the error term of the variance for the approximating functions (and thus pass to the limit), it is sufficient that $\varphi_\ell^i$ is uniformly bounded and with uniformly bounded variation (see \cite{W}) and to prove that the same conditions still hold for $\tilde{\varphi}_\ell^i$, obtained through the exponential map. 
	In \cite{W} the existence of such a sequence was established. Denoting with $V(\varphi)$ the total variation of a test function $\varphi$, let consider $\{\varphi_{{\ell}}^i\}_i$ a sequence of $C^{\infty}$ functions such that, as $\ell \rightarrow +\infty$,
	\begin{equation}\label{properties}
	\begin{split}
	& \varphi_{{\ell}}^i \rightarrow 1_{B_{r_\ell}} \mbox{ in }L^1({S}^2), \\
	& V(\varphi_{\ell}^i) \rightarrow V(1_{B_{r_{\ell}}}) \mbox{ and }\\
	& ||\varphi_{\ell}^i||_{\infty} \leq ||1_{B_{r_{\ell}}}||_{\infty}.
	\end{split}
	\end{equation}
	Our goal is to check whether analogous conditions still hold for $\tilde{\varphi}_{\ell}^i= \varphi_{\ell}^i \circ \exp$, defined on ${R}^2$. To simplify the notation we set
	$\tilde{\varphi}_\ell^i(x):=\tilde{\varphi}_{\ell}^i(r_{\ell} x), x \in {R}^2$. Note that, since $\varphi_{\ell}^i$ has support on ${S}^2$, which is compact, it follows that $\tilde{\varphi}_\ell^i$ has compact support in $\tilde{B}_1$. Hence, it is easy to prove the validity of the lemma below.
	
	\begin{lem} \label{lemma}
		Let $\tilde{\varphi}_\ell^i(x):=\tilde{\varphi}_{\ell}(r_{\ell} x), x \in {R}^2$, where $\tilde{\varphi}_{\ell}^i= \varphi_{\ell}^i \circ \exp$ and  $\{\varphi_{\ell}^i\}_i $ a sequence which satisfies (\ref{properties}). Then, $\tilde{\varphi}_{{\ell}}^i: {R}^2 \rightarrow {R} $ are continuously differentiable functions such that, as $i \rightarrow \infty,$
		\begin{equation}\label{cond}
		\begin{split}
		&\tilde{\varphi}_\ell^i \rightarrow 1_{\tilde{B}_1} \mbox{ in } L^1({R}^2)
		\\&V(\tilde{\varphi}_\ell^i) \rightarrow V(1_{\tilde{B}_1})
		\\&||\tilde{\varphi}_\ell^i||_{\infty} \leq ||1_{\tilde{B}_1}||_{\infty}.
		\end{split}
		\end{equation}
	\end{lem}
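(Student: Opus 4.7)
The strategy is to transfer each of the three properties in (3.1) from $\mathbb{S}^2$ to $\mathbb{R}^2$ by carefully tracking the distortion introduced by the two maps that define $\tilde{\varphi}^i$: the exponential map $\exp: T_N \mathbb{S}^2 \cong \mathbb{R}^2 \to \mathbb{S}^2$ centered at the North pole, and the linear rescaling $x \mapsto r_\ell x$. Since $\exp$ is a smooth diffeomorphism on a neighborhood of the origin that contains $\tilde{B}_{r_\ell}$, and $\varphi_\ell^i \in C^\infty(\mathbb{S}^2)$, the composition $\tilde{\varphi}^i = \varphi_\ell^i \circ \exp \circ (r_\ell \,\cdot\,)$ is continuously differentiable, with compact support contained in $\tilde{B}_1$ inherited from the support of $\varphi_\ell^i$ near $B_{r_\ell}$.

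First I would dispatch the $L^\infty$ bound, which is immediate: rescaling and pre-composition preserve the sup-norm, so $||\tilde{\varphi}^i||_{\infty} = ||\varphi_\ell^i||_{\infty} \leq ||1_{B_{r_\ell}}||_{\infty} = 1 = ||1_{\tilde{B}_1}||_{\infty}$. For the $L^1$ convergence, I would perform the change of variables $v = r_\ell x$ followed by $y = \exp(v)$. In geodesic normal coordinates on $\mathbb{S}^2$ centered at $N$, the area element pulls back to $\frac{\sin |v|}{|v|}\,dv$, which equals $1 + O(|v|^2)$ on the support $\tilde{B}_{r_\ell}$. Combining this with the scaling Jacobian $r_\ell^2$ yields
\[
||\tilde{\varphi}^i - 1_{\tilde{B}_1}||_{L^1(\mathbb{R}^2)} = r_\ell^{-2}\bigl(1 + O(r_\ell^2)\bigr)\,||\varphi_\ell^i - 1_{B_{r_\ell}}||_{L^1(\mathbb{S}^2)},
\]
so the first property in (3.1) gives the conclusion as $i \to \infty$ for each fixed $\ell$.

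The main technical work lies in the total variation. I would compute $V(\tilde{\varphi}^i) = \int_{\mathbb{R}^2} |\nabla \tilde{\varphi}^i(x)|\,dx$ in two steps. The scaling $x \mapsto r_\ell x$ contributes a factor $r_\ell$ to the gradient and $r_\ell^{-2}$ to the volume element, giving a net factor $r_\ell^{-1}$. To pass between the Euclidean gradient of $\tilde{\varphi}_\ell^i = \varphi_\ell^i \circ \exp$ and the Riemannian gradient of $\varphi_\ell^i$ on $\mathbb{S}^2$, I would use the expansion of the round metric in normal coordinates, $g_{ij}(v) = \delta_{ij} + O(|v|^2)$, which implies
\[
|\nabla_{\mathbb{S}^2} \varphi_\ell^i|(\exp(v)) = \bigl(1 + O(|v|^2)\bigr)\,|\nabla(\varphi_\ell^i \circ \exp)(v)|.
\]
Combined with the area element expansion above, this gives $V(\tilde{\varphi}^i) = r_\ell^{-1}\bigl(1 + O(r_\ell^2)\bigr)\,V(\varphi_\ell^i)$, and together with $V(\varphi_\ell^i) \to V(1_{B_{r_\ell}}) = 2\pi \sin r_\ell = r_\ell\bigl(2\pi + O(r_\ell^2)\bigr)$ as $i \to \infty$, one obtains $V(\tilde{\varphi}^i) \to 2\pi\bigl(1 + O(r_\ell^2)\bigr)$, matching $V(1_{\tilde{B}_1}) = 2\pi$ in the regime $r_\ell \to 0$ in which the lemma is applied.

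The principal obstacle is the Riemannian-geometric distortion step for the total variation, where one must control simultaneously the operator norm of $d\exp$ and the volume form, both of which differ from their Euclidean counterparts only by $\bigl(1 + O(|v|^2)\bigr)$ factors; once this control is secured, every quantity reduces to its Euclidean counterpart up to harmless lower-order errors.
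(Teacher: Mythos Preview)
Your approach is correct and follows the same outline as the paper's: pull back each of the three properties through the composite map $\exp \circ (r_\ell \,\cdot\,)$. The paper's argument is terser and more formal, writing a chain of (in)equalities that links $|V(\tilde{\varphi}^i) - V(1_{\tilde{B}_1})|$ directly to $|V(\varphi_\ell^i) - V(1_{B_{r_\ell}})|$ without making the metric distortion explicit, whereas you carry the $(1+O(r_\ell^2))$ factors from the normal-coordinate expansion of the round metric through each step. For the $L^1$ and $L^\infty$ parts the two arguments are essentially identical.

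Your explicitness pays a small price on the total-variation statement: you obtain $\lim_i V(\tilde{\varphi}^i) = 2\pi\bigl(1+O(r_\ell^2)\bigr)$ rather than the exact value $V(1_{\tilde{B}_1}) = 2\pi$ claimed in the lemma, and you are right to flag this. The paper's chain of inequalities glosses over precisely this point (its TV step is written rather loosely and does not visibly account for the position-dependent Jacobian either). For every application downstream in the paper, however, only \emph{uniform boundedness} of $V(\tilde{\varphi}^i)$ in $i$ is required, so that the constants in the error terms $O_{||\tilde{\varphi}^i||_\infty, V(\tilde{\varphi}^i)}(r_\ell^2)$ remain controlled when passing to the limit; your argument delivers that with room to spare, and in the regime $r_\ell \to 0$ your residual vanishes anyway.
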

	Now, let $\varphi_{\ell}: {S}^2 \rightarrow {R}$ be the indicator function $1_{B_{r_\ell}}$, $\forall \ell$. We denote $W_{r_{\ell}}(\cdot)$ the function defined in (\ref{Wdef}) with this choice of $\varphi_{{\ell}}$ and $\tilde{W_1}(\cdot)$ the one in (\ref{W_1}).
	
	\begin{lem}
	Let us consider the sequence $\varphi_{{\ell}}^i$ satisfying (\ref{properties}), $W^{\varphi_{\ell}^i}(\cdot)$ and $\tilde{W}^{\tilde\varphi_\ell^i}(\cdot)$ defined as (\ref{Wdef}) and (\ref{W_1}), respectively; then
		\begin{equation}\label{first}
		W^{\varphi_{{\ell}}^i}(\rho)= \frac{\rho}{4\pi}r_\ell^2||\tilde{\varphi}^i_\ell||^2_{L^2(\tilde{B}_1)}+O_{||\tilde{\varphi}_\ell^i||_{\infty},V(\tilde{\varphi}_\ell^i)}(\rho^2 r_\ell)+O(\rho^3 r_\ell^2 ||\tilde{\varphi}_\ell^i||_{L^2(\tilde{B}_1)}^2)
		\end{equation}
		and 
		\begin{equation}\label{second}
		W^{{\varphi}_\ell^i}(\rho)=O(\rho r_\ell^2 ||\tilde{\varphi}_\ell^i||^2_{\infty}).
		\end{equation}
	\end{lem}
\begin{proof}
	As already stated in Section \ref{Section-ontheproof}, it is quite simple, and it can be found in the supplemental article \cite{supp}, Lemma B.3, to establish the asymptotic geometric relation between $W_{r_{\ell}}$ and $\tilde{W_{1}}$, given in (\ref{1111}).
	If we consider the sequence $\varphi_{{\ell}}^i$ satisfying (\ref{properties}), $W^{\varphi_{\ell}^i}(\cdot)$ and $\tilde{W}^{\tilde\varphi_\ell^i}(\cdot)$ defined as (\ref{Wdef}) and (\ref{W_1}), respectively, it is easy to see that (\ref{1111}) holds for $W^{\varphi_{\ell}^i}$ and $\tilde{W}^{\tilde{\varphi}_\ell^i}$; namely, as $\ell \rightarrow \infty$,
	\begin{equation}\label{Rel1}
	W^{\varphi_{\ell}^i}(\rho)=r_\ell^3 \tilde{W}^{\tilde\varphi_\ell^i}(\rho \frac{1}{r_\ell})(1+O(\rho^2)),
	\end{equation}
		uniformly for $\rho \in [0,2r_\ell]$
(for the proof see the supplement article \cite{supp}, Section B.3, Corollary B.4).\\
	We can also get further informations on $\tilde{W}^{\tilde{\varphi}_\ell^i}$, e.g.,
	using polar coordinates with centre $x$, for each $x \in {R}^2$, (e.g. $y=(y_1,y_2) \rightarrow (\zeta,\phi)$ with $\zeta=\rho$ and $\phi=\arctan \frac{y_2-x_2}{y_1-x_1}$) we write
	\begin{equation*}
	\begin{split}
	\tilde{W}^{\tilde\varphi_\ell^i}(\rho)&= \frac{1}{8\pi^2} \int_{d(x,y)=\rho} \tilde{\varphi}_\ell^i (x) \tilde{\varphi}_\ell^i (y) \,dxdy
	=\dfrac{\rho}{8\pi^2} \int_{{R}^2} \tilde{\varphi}_\ell^i(x) \int_{0}^{2\pi} \tilde{\varphi}^i_{\ell,x}(\rho \cos \phi, \rho \sin \phi) \,d\phi  \,dx
	\end{split}
	\end{equation*}
	for a suitable defined function $\tilde{\varphi}^i_{\ell,x}: R^2 \rightarrow \{0,1\}.$
Defining
	\begin{equation}\label{W_0}
	\tilde{W}_{0}^{\tilde{\varphi}_\ell^i}(\rho):=  \int_{{R}^2} \tilde{\varphi}_\ell^i(x)  \int_{0}^{2\pi}  \tilde{\varphi}^i_{\ell,x}(\rho \cos \phi, \rho \sin \phi) \,d\phi \,dx,
	\end{equation}
	we have that 
	\begin{equation}\label{W1}
	\tilde{W}^{\tilde{\varphi}_\ell^i}(\rho)=\dfrac{\rho}{8\pi^2} \tilde{W_{0}}^{\tilde{\varphi}_\ell^i}(\rho).
	\end{equation}
	Note that $\tilde{W}_{0}^{\tilde{\varphi}_\ell^i}(\rho)$ is bounded by
	\begin{equation}\label{Rel3}
	|\tilde{W}_{0}^{\tilde{\varphi}_\ell^i}(\rho)|\leq 2\pi||\tilde{\varphi}_\ell^i||_{\infty}||\tilde{\varphi}_\ell^i||_{L^1(\tilde{B}_1)}\leq 2\pi^2||\tilde{\varphi}_\ell^i||_{\infty}^2,
	\end{equation}
	and in zero, it is equal to
	\begin{equation}\label{W2}
	\tilde{W_{0}}^{\tilde{\varphi}_\ell^i}(0)=2\pi ||\tilde{\varphi}_\ell^i||_{L^2(\tilde{B}_1)}^2.
	\end{equation}
	Moreover, it can be seen that the derivative of $	\tilde{W}_{0}^{\tilde{\varphi}_\ell^i}(\rho)$ is uniformly bounded by
	\begin{equation}\label{Rel4}
	|\tilde{W}^{\tilde{\varphi}_\ell^i}_0(\rho)'|\leq 2\pi||\tilde{\varphi}_\ell^i||_{\infty} V(\tilde{\varphi}_\ell^i);
	\end{equation}
	indeed, exchanging the order of the derivative and the integral, we obtain
	\begin{equation*}
	\begin{split}
	\bigg|\dfrac{\partial}{\partial \rho} \tilde{W}_{0}^{\tilde{\varphi}_\ell^i}(\rho)\bigg| & \leq  \int_{{R}^2} \bigg|\tilde{\varphi}_\ell^i(x)    \int_{0}^{2\pi} \dfrac{\partial}{\partial\rho}\tilde{\varphi}_\ell^i(\rho \cos \phi, \rho \sin \phi) \,d \phi \bigg|\,dx\\& \leq 2\pi \int_{\tilde{B}_1} |\tilde{\varphi}_\ell^i(x)| ||\nabla \tilde{\varphi}_\ell^i(x)|| \,dx
	=2\pi ||\tilde{\varphi}_\ell^i||_{\infty}V(\tilde{\varphi}_\ell^i).
	\end{split}
	\end{equation*}
Then, in view of (\ref{W2}) and the continuous differentiability of $\tilde{W}_{0}^{\tilde{\varphi}_\ell^i}(\rho)$, the Mean Value Theorem implies that,
		as $\rho \rightarrow 0$,
	\begin{equation}\label{Alessio2} 
	\tilde{W}_{0}^{\tilde{\varphi}_\ell^i}(\rho)=2\pi ||\tilde{\varphi}_\ell^i||_{L^2(\tilde{B}_1)}^2+O_{||\tilde{\varphi}_\ell^i||_{\infty},V(\tilde{\varphi}_\ell^i)}(\rho).
	\end{equation} 
Now, putting (\ref{W1}) in (\ref{Rel1}) we can state that, as $\ell \rightarrow \infty$,
		\begin{equation}\label{AP}
	W^{\varphi_{\ell}^i}(\rho)=\frac{\rho}{8\pi^2} r_\ell^2 \tilde{W}^{\tilde\varphi_\ell^i}_0\left(\frac{\rho}{r_\ell}\right)(1+O(\rho^2)),
\end{equation} 
with $\rho \in [0,r_\ell].$\\
Finally, replacing (\ref{Alessio2}) in (\ref{AP}), we obtain (\ref{first}) and then thanks also to (\ref{Rel3}), (\ref{second}) follows.
\end{proof}

	From now on $\{ \varphi_\ell^i\}_i$ will denote a sequence satisfying (\ref{properties}) and $\{\tilde{\varphi}_\ell^i\}_i$ the one satisfying (\ref{cond}).
	
	\section{Proof of the main results}\label{section-proofs}
	\subsection{Proof of Theorem \ref{varianza nodal} (Asymptotics for the variance)}\label{section-proofvar}
	As we have already mentioned, we apply an approximation argument; hence assuming the validity of Proposition \ref{var for cont func} and Proposition \ref{prop5.1} we prove Theorem \ref{varianza nodal}.
	\begin{proof}[Proof of Theorem \ref{varianza nodal} assuming Proposition \ref{var for cont func} and Proposition \ref{prop5.1} ]
		Let $\varphi_{{\ell}}^i \in C^\infty$ be a sequence of smooth functions satisfying (\ref{properties}) and let $\tilde{\varphi}_\ell^i$ defined as in Lemma \ref{lemma} satisfying (\ref{cond}). Proposition \ref{var for cont func} states that
	\begin{equation}\label{2.8bis}
	\Var(\mathcal{Z}^{\varphi_{\ell}^i}(T_\ell))=\dfrac{||\tilde{\varphi}_\ell^i||_{L^2(\tilde{B}_1)}^2}{256\pi} \cdot r_{\ell}^2 \log(r_{\ell}  \ell )+O_{||\tilde{\varphi}_\ell^i||_{\infty},V(\tilde{\varphi}_\ell^i)}(r_{\ell}^2);
	\end{equation}
		since $\tilde{\varphi}_\ell^i$ and $1_{\tilde{B}_1}$ are uniformly bounded, $L^1({R}^2)$-convergence implies $L^2({R}^2)$-convergence,
		$||\tilde{\varphi}_\ell^i||_{L^2({R}^2)}\rightarrow ||1_{\tilde{B}_1}||_{L^2({R}^2)}=\sqrt{\pi}$
		and it remains to prove that
		$$\Var[\mathcal{Z}^{\varphi_{\ell}^i}(T_\ell)] \rightarrow \Var[\mathcal{Z}_{\ell,r_{\ell}}].$$
To take the limit we need to show that the distribution of $\mathcal{Z}^{\varphi_{{\ell}}^i}$ depends continuously on $\varphi_\ell^i$.
Indeed, by linearity of $\mathcal{Z}^\varphi$ on $\varphi$, we have that
		\begin{equation*}
		\begin{split}
		{E}[(\mathcal{Z}^{\varphi_{\ell}^i}(T_\ell)-\mathcal{Z}_{\ell,r_{\ell}})^2]
		={E}[(\mathcal{Z}^{\varphi_{\ell}^i-1_{B_{r_\ell}}})^2]
			\end{split}
		\end{equation*}
		and applying Proposition \ref{prop5.1} to the difference $\varphi_{{\ell}}^i-1_{B_{r_\ell}}$, we get that
			\begin{equation*}
		\begin{split}
	&{E}[(\mathcal{Z}^{\varphi_{\ell}^i-1_{B_{r_\ell}}})^2] 
		=O({\ell}r_{\ell}^4 ||\tilde{\varphi}_\ell^i-1_{\tilde{B}_1}||_{L^1(\tilde{B}_1)} ||\tilde{\varphi}_{\ell}^i-1_{\tilde{B}_1}||_{\infty}) \rightarrow 0,
		\end{split}
		\end{equation*}
		as $i \rightarrow \infty,$ hence
		\begin{equation*}
		\begin{split}
		|\Var[\mathcal{Z}^{\varphi_{\ell}^i}(T_\ell)] - \Var[\mathcal{Z}_{\ell,r_{\ell}}]| &=
		|E[(\mathcal{Z}^{\varphi_{\ell}^i}(T_\ell))^2-(\mathcal{Z}_{\ell,r_{\ell}})^2]|+ |
		({E}[\mathcal{Z}^{\varphi_{\ell}^i}(T_\ell)])^2-(E[\mathcal{Z}_{\ell,r_{\ell}}])^2|.
		\end{split}
		\end{equation*}
	The second summand goes to zero for (\ref{media}), whereas for the first summand we have that
	\begin{equation}\label{ulla}
		\begin{split}
	& |E[(\mathcal{Z}^{\varphi_{\ell}^i}(T_\ell)^2-(\mathcal{Z}_{\ell,r_{\ell}})^2)]|
	=|{E}[(\mathcal{Z}^{\varphi_{\ell}^i}(T_\ell)-\mathcal{Z}_{\ell,r_{\ell}})^2]-2E[\mathcal{Z}_{\ell,r_\ell}^2]+2{E}[\mathcal{Z}^{\varphi_{\ell}^i}(T_\ell) \mathcal{Z}_{\ell,r_{\ell}}]|\\&
	\leq |E[(\mathcal{Z}^{\varphi_{\ell}^i}(T_\ell)-\mathcal{Z}_{\ell,r_{\ell}})^2]|
		+2|{E}[\mathcal{Z}^{\varphi_{\ell}^i}(T_\ell)\mathcal{Z}_{\ell,r_{\ell}}  -(\mathcal{Z}_{\ell,r_\ell})^2]|\\&
		\leq |E[(\mathcal{Z}^{\varphi_{\ell}^i}(T_\ell)-\mathcal{Z}_{\ell,r_{\ell}})^2]|
		+2|{E}[(\mathcal{Z}^{\varphi_{\ell}^i}(T_\ell) -(\mathcal{Z}_{\ell,r_\ell})) \mathcal{Z}_{\ell, r_\ell}]|\\&
		\leq |E[(\mathcal{Z}^{\varphi_{\ell}^i}(T_\ell)-\mathcal{Z}_{\ell,r_{\ell}})^2]|
		+2{E}[(\mathcal{Z}^{\varphi_{\ell}^i}(T_\ell) -\mathcal{Z}_{\ell,r_\ell})^2]^{1/2} E[\mathcal{Z}_{\ell,r_\ell}^2)]^{1/2}
		\end{split}
		\end{equation}
	which goes to zero for Proposition \ref{prop5.1}.
	Hence, taking the limit, as $i \rightarrow \infty$, in (\ref{2.8bis}) we obtain the thesis of Theorem \ref{varianza nodal}.
	\end{proof}
	
	Before proving Proposition \ref{var for cont func} and Proposition \ref{prop5.1} we introduce the 2-point correlation function $\tilde{K}_{\ell}(x,y)=\tilde{K}_\ell(d(x,y))$, defined as
	$$\tilde{K}_\ell(x,y)=\dfrac{1}{(2\pi ) \sqrt{1-P_\ell(x,y)^2}}{E}[||\nabla T_\ell(x)|| \cdot ||\nabla T_\ell(y)|| | T_\ell(x)=T_\ell(y)=0]$$
	 (see \cite{W}). The following result is proved in \cite{W}, Proposition 3.5. 
	 \begin{prop}\label{2-point corr}
	 	For any choice of $C>0$, as $\ell \rightarrow \infty$, we have
	 	\begin{equation}\label{56}
	 	\begin{split}
	 	K_{\ell}(\psi)&= \frac{1}{4} +\dfrac{1}{2} \dfrac{\sin(2\psi)}{\pi \ell \sin(\psi/L)} + \dfrac{1}{256} \dfrac{1}{\pi^2 \ell \sin(\psi/L)\psi}+ \dfrac{9}{32} \dfrac{\cos(2\psi)}{\pi \ell \psi \sin(\psi/L)}+\\ &+\dfrac{\frac{27}{64}\sin(2\psi)-\frac{75}{256}\cos (4\psi)}{\pi^2 \ell \psi \sin(\psi/L)}+O\bigg(\frac{1}{\psi^3}+\frac{1}{\ell \psi}\bigg),
	 	\end{split}
	 	\end{equation}
	 	uniformly for $C<\psi<\dfrac{\pi L}{2}$, where $\tilde{K}_\ell(\frac{\psi}{L})= \frac{\ell(\ell+1)}{2} K_\ell(\psi)$.
	 \end{prop}
	 It is also known that, for $0<\psi<C$, we may bound (\cite{W}, equation (98)) 
	 \begin{equation}\label{K per piccolo argomento}
	 \bigg| K_\ell(\psi)\bigg|=O\left(\frac{1}{\psi}\right).
	 \end{equation}
	\begin{proof}[Proof of Proposition \ref{var for cont func}]
		In \cite{W} (Proof of Theorem 1.4), it is shown that for functions in $C^1({S}^2)$, it is possible to write
		\begin{equation}\label{fu}
		{E}[(\mathcal{Z}^{\varphi^{i}_{\ell}}(T_\ell))^2]= \int_{{S}^2\times {S}^2} \varphi_{\ell}^{i}(x) \varphi_{\ell}^{i}(y) \tilde{K}_{\ell}(x,y) \,dx dy.
		\end{equation}
	Employing Fubini, we get
		\begin{equation}\label{mom}
		{E}[(\mathcal{Z}^{\varphi^{i}_{\ell}}(T_\ell))^2]= 8\pi^2 \int_{0}^{2r_\ell} \tilde{K}_{\ell}(\rho) W^{\varphi_{\ell}^i}(\rho) \,d\rho;
		\end{equation}
		with $\tilde{K}_{\ell}(\rho)=\tilde{K}_{\ell}(x,y),$
		$x,y \in {S}^2$ being any pair of points with $d(x,y)=\rho.$
Indeed, we change coordinates in (\ref{fu}), centering $x$ and parameterizing $y$ in terms of $(l, \theta)$, where $l=d(x,y) \in [0,2r_\ell]$ is the distance between $x$ and $y$ and $\theta \in [0,2\pi]$. The norm of the Jacobian of this change of coordintaes is $1$, since every transformation in the sphere can be seen as a rotation; then, applying Fubini and doing the same change of coordinates in the definition of $W^{\varphi_{{\ell}}^i}$ (this time $l$ is fixed inside the integral to be $\rho$) it is seen the validity of equation (\ref{mom}).
	Now, denoting $L=\ell+\frac{1}{2}$, changing the coordinates $\rho= \frac{\psi}{L}$, and writing $8\pi^2$ as $2\pi |S^2|$, we have that
		$${E}[(\mathcal{Z}^{\varphi^{i}_{\ell}}(T_\ell))^2]= \frac{2\pi |{S}^2|}{L} \int_{0}^{2r_\ell L} \tilde{K}_{\ell}\left(\frac{\psi}{L}\right) W^{\varphi_{\ell}^i}\left(\frac{\psi}{L}\right) \,d\psi;$$
		setting $\tilde{K_{\ell}}(\frac{\psi}{L}):=\frac{\ell(\ell+1)}{2}K_{\ell}(\psi)$,
		we obtain that
		$${E}[(\mathcal{Z}^{\varphi^{i}_{\ell}}(T_\ell))^2]= \frac{\pi |{S}^2|}{L} \ell(\ell+1) \int_{0}^{2r_\ell L} K_{\ell}(\psi) W^{\varphi_{\ell}^i}\left(\frac{\psi}{L}\right) \,d\psi.$$ 
		Moreover, from (\ref{media}) it follows that $$(E[\mathcal{Z}^{\varphi_\ell^i}(T_\ell)])^2= \frac{\ell(\ell+1)}{2 \cdot 4} \int_{{{S}^2}\times {{S}^2}} \varphi_\ell^i(x) \varphi_\ell^i(y)\, dx dy$$ and applying Fubini and changing cordinates as above we obtain $$E[\mathcal{Z}^{\varphi_\ell^i}(T_\ell)]^2=\frac{\ell(\ell+1)}{8L} 2\pi |S^2| \int_{0}^{2 r_\ell L} W^{\varphi_{{\ell}}^i}\left(\frac{\psi}{L}\right) \,d\psi,$$
		from which we conclude that
		\begin{equation}\label{ics}
		\Var[\mathcal{Z}^{\varphi^{i}_{\ell}}(T_\ell)]=\dfrac{\pi|{S}^2|\ell(\ell+1)}{L} \int_{0}^{2r_\ell L} \left(K_{\ell}(\psi)-\frac{1}{4}\right) W^{\varphi_{\ell}^i}\left(\frac{\psi}{L}\right) \, d\psi.
		\end{equation}		
	Splitting the interval of the integral in $[0,1]$ and $[1, 2r_\ell L]$, we have that
		\begin{equation}\label{ale}
	\begin{split}
	\Var[(\mathcal{Z}^{\varphi^{i}_{\ell}}(T_\ell))]&=\dfrac{\pi|{S}^2|\ell(\ell+1)}{L} \int_{0}^{1} \left(K_{\ell}(\psi)-\frac{1}{4}\right) W^{\varphi_{\ell}^i}\left(\frac{\psi}{L}\right) \, d\psi\\&
	+\dfrac{\pi|{S}^2|\ell(\ell+1)}{L} \int_{1}^{2r_\ell L} \left(K_{\ell}(\psi)-\frac{1}{4}\right) W^{\varphi_{\ell}^i}\left(\frac{\psi}{L}\right) \, d\psi
	\end{split}
	\end{equation}
and in view of (\ref{K per piccolo argomento}) and (\ref{second}) the first integral in (\ref{ale}) is equal to 
		\begin{equation*}
		\begin{split}
		&O_{||\tilde{\varphi}_\ell^i||_{\infty}}\bigg( \dfrac{\pi|{S}^2|\ell(\ell+1)}{L^2} r_\ell^2 \int_{0}^{1} \bigg|\frac{1}{\psi}-\frac{1}{4}\bigg|\psi \, d\psi\bigg)
		=O_{||\tilde{\varphi}_\ell^i||_{\infty}}( r_\ell^2).
		\end{split}
		\end{equation*}
The second integral in the right hand side of (\ref{ale}) is, exploiting $(\ref{first})$, given by
\begin{equation}
\begin{split}
&	\underbrace{\dfrac{ \pi |S^2| ||\tilde{\varphi}_\ell^i||_{L^2(\tilde{B}_1)}^2 \ell(\ell+1)}{ 4 \pi L^2} r_{\ell}^2 \int_{1}^{2r_{\ell}L} \left(K_{\ell}(\psi)-\frac{1}{4}\right) \psi \,d\psi}_{(a)}+\\&+\underbrace{O_{||\tilde{\varphi}_\ell^i||_{\infty},V(\tilde{\varphi}_\ell^i)} \bigg( \dfrac{\ell(\ell+1)}{L^3} r_{\ell} \int_{1}^{2r_{\ell}L} \left(K_{\ell}(\psi)-\frac{1}{4}\right) \psi^2 \,d\psi \bigg)}_{(b)}+\\&
+ \underbrace{O_{||\tilde{\varphi}_\ell^i||_{\infty}} \bigg( \dfrac{\ell(\ell+1)}{L^4} r_{\ell}^2 \int_{1}^{2r_{\ell}L} \left(K_{\ell}(\psi)-\frac{1}{4}\right) \psi^3 \,d\psi \bigg)}_{(c)}.
	\end{split}
\end{equation}

	Thanks to Corollary \ref{corollary}, equation (\ref{lemmaKpsi}), integral (a) is given by
		$$(a)=\dfrac{||\tilde{\varphi}_\ell^i||_{L^2(\tilde{B}_1)}^2}{256\pi} r_{\ell}^2 \log r_\ell \ell+O(r_{\ell}^2).$$
In view of Lemma \ref{lemmaK}, the error term in $(b)$ is
		\begin{equation}
		\begin{split}
		(b)&=O_{||\tilde{\varphi}_\ell^i||_{\infty},V(\tilde{\varphi}_\ell^i)}\bigg( \frac{r_\ell}{\ell} (2r_\ell \ell-1) \bigg) =O_{||\tilde{\varphi}_\ell^i||_{\infty},V(\tilde{\varphi}_\ell^i)}(r_{\ell}^2).
		\end{split}
		\end{equation}
Regarding $(c)$, similar computations lead to $(c)=O(\frac{r_\ell}{\ell^2} ((2r_\ell \ell)^2 -1))=O_{||\tilde{\varphi}_\ell^i||_{\infty}}(r_{\ell}^4)$ and then, we can conclude that the variance 
		of $\mathcal{Z}^{\varphi_{\ell}^{i}}(T_\ell)$ is
		$$\Var[\mathcal{Z}^{\varphi_{\ell}^i}(T_\ell)]=\frac{||\tilde{\varphi}_\ell^i||_{L^2(\tilde{B}_1)}^2}{256\pi} r_{\ell}^2 \log(r_{\ell}  \ell )+O_{||\tilde{\varphi}_\ell^{i}||_{\infty},V(\tilde{\varphi}_\ell^{i})}(r_{\ell}^2).$$
	\end{proof}
	
	\begin{proof}[Proof of Proposition \ref{prop5.1}]
		As we did in the proof of Proposition \ref{var for cont func} we write
		\begin{equation}
		\begin{split}
		{E}[(\mathcal{Z}^{\varphi_{\ell}^i}(T_\ell))^2]&= \!\int_{{S}^2 \times {S}^2}\! \!\!\tilde{K_{\ell}}(x,y) \varphi_{\ell}^i(x) \varphi_{\ell}^i(y)\,dxdy\\&
		= \dfrac{\pi|{S}^2|\ell(\ell+1)}{L} \!\int_{0}^{2r_{\ell} L} \!\!K_{\ell}(\psi) W^{\varphi_{\ell}^i}\left(\frac{\psi}{L}\right) \,d\psi
	\end{split}
	\end{equation}
Splitting the integral,	in $[1, 2r_\ell \ell]$, thanks to Lemma \ref{lemmaK}, $K_{\ell}(\psi)$ is bounded by a constant so that $|K_{\ell}(\psi)|=O_C(1)$; whereas in $[0,1]$, 
		we exploit (\ref{K per piccolo argomento}). Hence, using (\ref{first}), we get
		\begin{equation}\label{u}
		\begin{split}
		&	\bigg| \frac{r_\ell^2}{\ell} \int_{1}^{2r_{\ell} L} \psi K_{\ell}(\psi) ||\tilde{\varphi}_\ell^i||_{L^2(\tilde{B}_1)}^2  \,d\psi \bigg| 
		\ll \frac{L^2}{\ell} r_{\ell}^4 ||\tilde{\varphi}_\ell^i||_{L^2(\tilde{B}_1)}^2 \int_{0}^{2} \rho\,d\rho \ll L r_{\ell}^4  ||\tilde{\varphi}_\ell^i||_{\infty}	||\tilde{\varphi}_\ell^i||_{L^1(\tilde{B}_1)}
		\end{split}
		\end{equation}
		and
		\begin{equation}
		\begin{split}
		&	\bigg|\frac{r_\ell^2}{\ell} \int_{0}^{1} \psi K_{\ell}(\psi) ||\tilde{\varphi}_\ell^i||_{L^2(\tilde{B}_1)}^2  \,d\psi\bigg| \ll \frac{r_\ell^2}{\ell}  \int_{0}^{1}||\tilde{\varphi}_\ell^i||_{L^2(\tilde{B}_1)}^2 \,d\psi
		\end{split}
		\end{equation}
which is dominated by (\ref{u}) and the thesis follows.
	\end{proof}
	
	\subsection{Proof of Theorem \ref{CLT} (Central Limit Theorem)} \label{section-proofCLT}
	We split this section in more subsections to make our argument clearer. Firstly, in \ref{correlationsubsection} we show that the nodal length and the integral of $H_4(T_\ell(x))$ in the shrinking spherical cap are fully correlated; secondly, in \ref{fourthcumulantsubsection} we compute the fourth cumulant of the ``local" sample trispectrum in order to apply the Fourth Moment Theorem (\cite{Peccati Nourdin}, Theorem 5.2.6) and to conclude the proof of the Central Limit Theorem in \ref{cltproof}. 
	\subsubsection{Correlation between $\mathcal{Z}_{\ell,r_{\ell}}$ and $\mathcal{M}_{\ell,r_{\ell}}$ (Proof of Proposition \ref{corr})}\label{correlationsubsection}
	
	Here we show the asymptotic equivalence (in the $L^2(\Omega)$-sense) of the nodal length $\mathcal{Z}_{\ell,r_{\ell}}$ and the trispectrum $\int_{B_{r_{\ell}}}H_4(T_\ell(x)) \, dx$. In \cite{MRW}, the case of the full sphere was considered and it was established that, as $\ell \rightarrow +\infty$,
	$$\Corr(\mathcal{Z}(T_\ell); \mathcal{M}_{\ell})=1+O\bigg(\frac{1}{\log \ell}\bigg),$$ where $\mathcal{M}_\ell$ is the integral of $H_4(T_\ell(x))$ on ${S}^2$. In decreasing domains the full correlation still holds. Let us define the sequence of centered random variables $\mathcal{M}_{\ell,r_{\ell}}$ as in (\ref{trispectrum}). To prove Proposition \ref{corr} we shall need Proposition \ref{lemma2.3} and the lemma below.
	\begin{lem}\label{lemma2.2}
		The covariance between $\mathcal{Z}_{\ell,r_{\ell}}$ and $\mathcal{M}_{\ell,r_{\ell}}$, as $\ell \rightarrow \infty,$  is given by
		\begin{equation}\label{cov}
		\Cov(\mathcal{Z}_{\ell,r_{\ell}};\mathcal{M}_{\ell,r_{\ell}})= \dfrac{1}{256} r_{\ell}^2 \log r_{\ell}\ell+O(r_{\ell}^2).
		\end{equation}
	\end{lem}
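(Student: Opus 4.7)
The strategy rests on Wiener chaos orthogonality: since $\mathcal{M}_{\ell,r_\ell}$ is by construction a constant multiple of $\int_{B_{r_\ell}} H_4(T_\ell(x))\,dx$, it lies entirely in the fourth Wiener chaos generated by $T_\ell$, so
$$\Cov(\mathcal{Z}_{\ell,r_\ell}, \mathcal{M}_{\ell,r_\ell}) = \Cov(\mathrm{Proj}[\mathcal{Z}_{\ell,r_\ell}\,|\,4],\, \mathcal{M}_{\ell,r_\ell}),$$
and only the fourth chaotic projection of the nodal length contributes. The plan is to identify $\mathrm{Proj}[\mathcal{Z}_{\ell,r_\ell}\,|\,4]$ explicitly and show that it coincides with $\mathcal{M}_{\ell,r_\ell}$ up to an $L^2(\Omega)$-negligible remainder, then invoke Proposition \ref{lemma2.3}.

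Following \cite{MRW}, the chaotic expansion of $\mathcal{Z}_{\ell,r_\ell}$ is obtained from the regularized Kac-Rice representation $\mathcal{Z}_{\ell,r_\ell} = \lim_{\eps\downarrow 0}\tfrac{1}{2\eps}\int_{B_{r_\ell}} \mathbf{1}_{|T_\ell(x)|<\eps} \|\nabla T_\ell(x)\|\,dx$ by Hermite-expanding the integrand in the triple $(T_\ell(x), \widetilde{\partial_1 T_\ell(x)}, \widetilde{\partial_2 T_\ell(x)})$ in a local orthonormal frame, where the partial derivatives are normalized to unit variance. Collecting contributions of total degree four yields
$$\mathrm{Proj}[\mathcal{Z}_{\ell,r_\ell}\,|\,4] = \sum_{a+b+c=4} \alpha_{a,b,c} \int_{B_{r_\ell}} H_a(T_\ell(x))\, H_b(\widetilde{\partial_1 T_\ell(x)})\, H_c(\widetilde{\partial_2 T_\ell(x)})\,dx$$
with explicit numerical coefficients $\alpha_{a,b,c}$; the triple $(a,b,c)=(4,0,0)$ reproduces $\mathcal{M}_{\ell,r_\ell}$ exactly. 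In the full sphere case \cite{MRW} shows that the ``mixed'' terms ($a<4$) vanish identically, through integration by parts exploiting $\Delta_{\mathbb{S}^2}T_\ell = -\lambda_\ell T_\ell$ and the absence of boundary. On a spherical cap these cancellations only leave boundary remainders, so the key technical step is to prove that $R_\ell := \mathrm{Proj}[\mathcal{Z}_{\ell,r_\ell}\,|\,4] - \mathcal{M}_{\ell,r_\ell}$ has $\|R_\ell\|_{L^2(\Omega)} = O(r_\ell)$ without any logarithmic factor.

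Granted this, one estimates $\Cov(R_\ell, \mathcal{M}_{\ell,r_\ell})$ directly, rather than through Cauchy-Schwarz, by writing it as a sum of double integrals over $B_{r_\ell}\times B_{r_\ell}$ of joint moments of $T_\ell$ and its first derivatives. By the diagram formula for jointly Gaussian vectors each such integrand reduces to a polynomial in the entries of the covariance matrix of $(T_\ell(x), \nabla T_\ell(x), T_\ell(y), \nabla T_\ell(y))$, which are expressible through $P_\ell(\cos d(x,y))$ and its first two derivatives; Hilb's asymptotic expansion of Legendre polynomials, combined with the auxiliary function $W^{\varphi_\ell}$ from Section 3, then yields $\Cov(R_\ell, \mathcal{M}_{\ell,r_\ell}) = O(r_\ell^2)$. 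Invoking Proposition \ref{lemma2.3}, which gives $\Var(\mathcal{M}_{\ell,r_\ell}) = \tfrac{1}{256}r_\ell^2 \log(r_\ell\ell) + O(r_\ell^2)$, one concludes
$$\Cov(\mathcal{Z}_{\ell,r_\ell}, \mathcal{M}_{\ell,r_\ell}) = \Var(\mathcal{M}_{\ell,r_\ell}) + \Cov(R_\ell, \mathcal{M}_{\ell,r_\ell}) = \tfrac{1}{256}\, r_\ell^2 \log(r_\ell\ell) + O(r_\ell^2),$$
which is exactly (\ref{cov}). The main obstacle is the systematic estimation of the mixed degree-four terms: unlike the full sphere they no longer vanish exactly, and the asymptotic cancellation must be extracted uniformly in $\rho\in[0,2r_\ell]$ from the interplay between boundary contributions and the high-frequency oscillations of $P_\ell^{(k)}(\cos d(x,y))$, in the same spirit as (but more delicate than) the variance calculations of Propositions \ref{var for cont func} and \ref{lemmasmooth2}.
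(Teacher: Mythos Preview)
Your outline is valid, but it is really a reorganization of the paper's own computation rather than a genuinely new route. The paper does not split the fourth chaos of $\mathcal{Z}_{\ell,r_\ell}$ into $\mathcal{M}_{\ell,r_\ell}+R_\ell$; instead it works with the \emph{two-point cross-correlation function} $J_\ell(\psi,4)$ of \cite{MRW}, which already packages $\mathbb{E}[\Psi_\ell(\bar x,4)\,H_4(T_\ell(y(\psi/L)))]$ --- i.e.\ \emph{all} the mixed fourth-order moments you would meet in $\Cov(R_\ell,\mathcal{M}_{\ell,r_\ell})$ together with the pure $H_4$--$H_4$ piece --- and then integrates it against $W^{\varphi_\ell^i}$ after smooth approximation (Lemma~\ref{lemmasmooth}). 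Your proposed estimate ``$\Cov(R_\ell,\mathcal{M}_{\ell,r_\ell})=O(r_\ell^2)$'' via the diagram formula and Hilb asymptotics is exactly the content of the terms in the expansion (\ref{3.3}) of $J_\ell(\psi,4)$ other than the leading $\tfrac{1}{64}\,(\psi\sin(\psi/L))^{-1}$; the paper simply does not separate them out. What your framing buys is conceptual clarity (the leading part is transparently $\Var(\mathcal{M}_{\ell,r_\ell})$ via Proposition~\ref{lemma2.3}) and, potentially, the ability to bypass the smooth approximation step, since chaos projections of $\mathcal{Z}_{\ell,r_\ell}$ are already well-defined integrals over $B_{r_\ell}$. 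What the paper's packaging buys is economy: one expansion of $J_\ell$ handles everything at once, and the smooth-approximation argument used for the variance carries over verbatim. Note finally that your intermediate claim $\|R_\ell\|_{L^2(\Omega)}=O(r_\ell)$ is not actually used in your argument (you estimate the cross-covariance directly, not via Cauchy--Schwarz) and is in fact stronger than what is proved anywhere in the paper at this stage; it is a \emph{consequence} of Proposition~\ref{corr}, not an input to it, so you should drop it from the proof.
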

	Putting together Lemma \ref{lemma2.2}, Proposition \ref{lemma2.3} and Theorem \ref{varianza nodal}, Proposition \ref{corr} is easily proved:
	$$\Corr(\mathcal{Z}_{\ell,r_{\ell}};\mathcal{M}_{{\ell},r_{\ell}})=\dfrac{\Cov(\mathcal{Z}_{{\ell};r_{\ell}};\mathcal{M}_{{\ell};r_{\ell}})}{\sqrt{\Var(\mathcal{Z}_{{\ell};r_{\ell}})\Var(\mathcal{M}_{{\ell};r_{\ell}})}}=1+O\bigg(\dfrac{1}{\log r_\ell \ell}\bigg).$$

Hence, we need to prove Lemma \ref{lemma2.2} and Proposition \ref{lemma2.3}. In order to do that we define the 2-point cross correlation function $\mathcal{J}_{\ell}(\psi,4)$.  
We shall write $\bar{x}=(0,0)$ for the North Pole and $y(\theta)=(0,\theta)$ for the points on the meridian where $\varphi = 0$. Then, 
\begin{equation}\label{Jdef}
\mathcal{J}_{\ell}(\psi;4)=\left[ -\frac{1}{4} \sqrt{\dfrac{\ell(\ell+1)}{2}} \dfrac{1}{4!} \right] \times \dfrac{8\pi^2}{L} {E}\left[\Psi_{\ell}(\bar{x},4)H_4\left(T_\ell\left(y\left(\frac{\psi}{L}\right)\right)\right)\right]
\end{equation} 
(see the supplement article \cite{supp}, Section A.1, for the definition of $\Psi_{\ell}(\bar{x},4)$ and see also \cite{MRW}).
 The following result is proved in \cite{MRW}, Proposition 3.1.
\begin{prop}\label{2-point cross corr}
	For any constant $C>0$, uniformly over $\ell$ we have, for $0<\psi<C,$
	\begin{equation}\label{3.2}
	\mathcal{J}_{\ell}=O( \ell ),
	\end{equation}
	and, for $C<\psi<L \frac{\pi}{2}$,
	\begin{equation}\label{3.3}
	\begin{split}
	\mathcal{J}_{\ell}(\psi,4)= &\dfrac{1}{64} \dfrac{1}{\psi \sin(\psi/L)}+\frac{5}{64} \frac{\cos 4\psi}{\psi \sin(\psi/L)}-\frac{3}{16} \frac{\sin(2\psi)}{\psi \sin(\psi/L)}+ O\left( \dfrac{1}{\psi^2\sin(\psi/L)}\right)\\&+O \left( \frac{1}{\ell \psi\sin(\psi/L)}\right).
	\end{split}
	\end{equation}
\end{prop}

	\begin{proof}[Proof of Lemma \ref{lemma2.2}]
In the supplement article \cite{supp}, Lemma B.2, we show that 
	 $$\Cov(\mathcal{Z}_{\ell,r_\ell}, \mathcal{M}_{\ell,r_\ell})= \lim_{i \rightarrow \infty} \int_{0}^{2r_\ell \ell} \mathcal{J}_\ell (\psi,4) W^{\varphi_{{\ell}}^i}\left(\frac{\psi}{L}\right) \,d\psi,$$
	 where $\mathcal{\mathcal{J}}_{\ell}(\psi;4)$ is the two point cross-correlation function defined in (\ref{Jdef}).
		Then, to compute this integral we split it in:
		$$I_1:=\int_{0}^{1} \mathcal{J}_{\ell}(\psi,4)W^{\varphi_{\ell}^i}\bigg(\frac{\psi}{L}\bigg)\,d\psi \mbox{ }\mbox{ }\mbox{ }
		\mbox{ and } 	\mbox{ }\mbox{ }\mbox{ } I_2:=\int_{1}^{2r_{\ell}L}\mathcal{J}_{\ell}(\psi,4)W^{\varphi_{\ell}^i}\bigg(\frac{\psi}{L}\bigg)\,d\psi;$$
exploiting (\ref{second}) it follows that
		\begin{equation}
		\begin{split}
		I_1&=\int_{0}^{1} \mathcal{J}_{\ell}(\psi,4)W^{\varphi_{\ell}^i}\bigg(\frac{\psi}{L}\bigg)\,d\psi 
		\ll \frac{r_\ell^2}{\ell} \int_{0}^{1} |\mathcal{J}_\ell(\psi,4)| \psi ||{\tilde{\varphi}_\ell^i}||_{\infty}^2 \,d\psi 
		\end{split}
		\end{equation}
	and thanks to 
	(\ref{3.2}), we have, 
	$$I_1 \ll \frac{r_\ell^2}{L} \int_{0}^{1} \ell \psi ||{\tilde{\varphi}_\ell^i}||_\infty^2 \,d\psi =O_{||{\tilde{\varphi}_\ell^i}||_\infty}(r_\ell^2),$$
	as $\ell \rightarrow \infty.$
		Regarding $I_2$, equation (\ref{first}) implies
		\begin{equation}\label{ua}
	\begin{split}
	I_2=&	||\tilde{\varphi}_\ell^i||^2_{L^2(\tilde{B}_1)}	 \frac{r_\ell^2}{4\pi L}\int_{1}^{2r_{\ell}L} \mathcal{J}_\ell (\psi,4) \psi
	\,d\psi+O_{||\tilde{\varphi}_\ell^i||_{\infty},V(\tilde{\varphi}_\ell^i)}\bigg( \int_{1}^{2r_{\ell}L}  \mathcal{J}_\ell (\psi,4) r_{\ell}\frac{\psi^2}{L^2} \,d\psi \bigg)\\&
	+ O_{||{\tilde{\varphi}_\ell^i}||_\infty}
	\bigg(\int_{1}^{2r_{\ell}L}  \mathcal{J}_\ell (\psi,4) {r_{\ell}^2} \frac{\psi^3}{L^3}
\,d\psi\bigg);
	\end{split}
		\end{equation}
		thanks to Lemma \ref{lemmaJ}, it is easy to see that the second and the third terms of (\ref{ua}) are, respectively, given by
			$$O_{||\tilde{\varphi}_\ell^i||_{\infty},V(\tilde{\varphi}_\ell^i)}\bigg(\frac{r_\ell}{L}  \int_{1}^{2r_\ell \ell}P_2(\psi) \, d\psi\bigg)=O_{||\tilde{\varphi}_\ell^i||_{\infty},V(\tilde{\varphi}_\ell^i)}( r_\ell^2),$$
		and
		$$O_{||{\tilde{\varphi}_\ell^i}||_\infty} \bigg(\frac{r_\ell^2}{\ell^2} \int_{1}^{2r_\ell \ell} \frac{P_2(\psi)}{\psi^2} \psi^3\,d\psi\bigg)=O_{||{\tilde{\varphi}_\ell^i}||_\infty}\bigg(\frac{r_\ell^2}{\ell}(2r_\ell \ell-1)^2\bigg)=O_{||{\tilde{\varphi}_\ell^i}||_\infty}(r_\ell^4),$$
where $P_2(\psi)$ is defined in (\ref{PJ}).
Finally,  (\ref{lemma Jpsi}) applied to the first term of (\ref{ua}) leads to
	$$I_2= \frac{||\tilde{\varphi}_\ell^i||_{L^2(\tilde{B}_1)}^2}{256\pi} r_{\ell}^2 \log(2r_{\ell}L)+O(r_{\ell}^2)$$ and hence the conclusion of the lemma follows.
	\end{proof}
	Proposition \ref{lemma2.3} is easily seen as a corollary of Proposition \ref{lemmasmooth2} and Proposition \ref{propM} as follows.
	\begin{proof}[Proof of Proposition \ref{lemma2.3} assuming Proposition \ref{lemmasmooth2} and Proposition \ref{propM} ]
		Let $\mathcal{M}^{\varphi_{{\ell}}^i}$ defined as in (\ref{smoothtrisp}); since $\mathcal{M}^{\varphi}$ is linear in $\varphi$, we have that
	 $${E}[(\mathcal{M}^{{\varphi}^i_{\ell}}-\mathcal{M}_{{\ell},r_{\ell}})^2]
	 =	{E}[(\mathcal{M}^{{\varphi}^i_{\ell}-1_{B_{r_{\ell}}}})^2]$$ and applying Proposition \ref{propM} to the function $\varphi^i_{\ell}-1_{B_{r_{\ell}}}$ and doing similar computations we did in (\ref{ulla}),
		we get
		$$|\Var(\mathcal{M}^{{\varphi}^i_{\ell}})-\Var(\mathcal{M}_{{\ell},r_{\ell}})| \leq |{E}[(\mathcal{M}^{{\varphi}^i_{\ell}-1_{B_{r_{\ell}}}})^2]|+2|{E}[(\mathcal{M}^{{\varphi}^i_{\ell}}-\mathcal{M}_{{\ell},r_{\ell}})^2]^{1/2} E[\mathcal{M}_{{\ell};r_{\ell}}^2]^{1/2}|+$$
		$$|({E}[\mathcal{M}^{{\varphi}^i_{\ell}}])^2-({E}[\mathcal{M}_{{\ell},r_{\ell}}])^2|$$
	which goes to zero, as $i\rightarrow \infty$, by the $L^1$ convergence of $\varphi_{{\ell}}^i$ and Proposition \ref{propM}. Then, taking the limit in Proposition \ref{lemmasmooth2}, the thesis follows.
	\end{proof}
	Let us now prove Proposition \ref{lemmasmooth2} and Proposition \ref{propM}. We recall that $P_\ell$ is the covariance function of $T_\ell$ and the following expansion for $P_{\ell}(\cos \frac{\psi}{L})^4$ is given in \cite{W}, Lemma 3.9: for $\ell \geq 1$ and any constant $C>0$, $C<\psi<\pi L /2$,
		\begin{equation}\label{pl4}
		P_{\ell}(\cos (\psi/L))^4=\dfrac{\frac{3}{2}-2\sin(2\psi)-\frac{1}{2} \cos(4\psi)}{\pi^2 {\ell}^2 \sin(\psi/L)^2}+O\bigg(\dfrac{1}{\psi^3}\bigg).
		\end{equation}
		Recall also that, for $0<\psi<\frac{\pi L}{2}$, as $\ell \rightarrow \infty$,
		\begin{equation}\label{Pazero}
		\bigg|P_\ell(\cos \frac{\psi}{L})\bigg|=O\bigg(\frac{1}{\sqrt{\psi}}\bigg),
		\end{equation}
	(see (\ref{hilb2}), see also \cite{W}).
	\begin{proof}[Proof of Proposition \ref{lemmasmooth2}]
		The idea of the proof is quite similar to the one in Proposition \ref{var for cont func}; actually,
		we write the variance of $\mathcal{M}^{{\varphi}^i_{\ell}}$ as
		\begin{equation}\label{uuu}
		\begin{split}
		\Var(\mathcal{M}^{{\varphi}^i_{\ell}})&=\Var\bigg[-\dfrac{1}{4}\sqrt{\dfrac{\ell(\ell+1)}{2}}\dfrac{1}{4!} \int_{{S}^2} \varphi_{{\ell}}^i(x) H_4(T_\ell(x)) \,dx\bigg]\\&
		=\dfrac{1}{16 }\dfrac{\ell(\ell+1)}{2}\dfrac{1}{4!^2}{E}\bigg[ \int_{{S}^2} \varphi_{{\ell}}^i(x) H_4(T_\ell(x)) \,dx \int_{{S}^2} \varphi_{{\ell}}^i(y) H_4(T_\ell(y)) \,dy\bigg]=\\&
		=\dfrac{1}{16 }\dfrac{\ell(\ell+1)}{2}\dfrac{1}{4!^2} \int_{{{S}^2}\times {{S}^2}} {E}[ H_4(T_\ell(x)) H_4(T_\ell(y))] \varphi_\ell^i(x) \varphi_{{\ell}}^i(y) \,dx \,dy\\&
		=\dfrac{1}{16 }\dfrac{\ell(\ell+1)}{2}\dfrac{1}{4!^2} 4!\int_{{S}^2\times {S}^2} P_{\ell} ( \langle x,y \rangle)^4 \varphi_\ell^i(x) \varphi_{{\ell}}^i(y) \,dx dy,
		\end{split}
		\end{equation}
where in the last passage we exploited property (A.1) of the supplement article \cite{supp}.
		Employing Fubini, (\ref{uuu}) is equal to
		\begin{equation}\label{?}
\dfrac{1}{16 }\dfrac{\ell(\ell+1)}{2}\dfrac{1}{4!}  8\pi^2 \int_{0}^{2r_{\ell}} P_{\ell}(\cos \rho)^4 W^{\varphi_{\ell}^i}(\rho) \, d\rho.
		\end{equation}
	Changing variable $\rho=\dfrac{\psi}{L}$ and splitting the integral, (\ref{?}) is equal to
		\begin{equation}\label{integral}
		\begin{split}
			\dfrac{8\pi^2}{16 }\dfrac{\ell(\ell+1)}{2L}\dfrac{1}{4!} &	\int_{0}^{1} P_{\ell}\left(\cos \frac{\psi}{L}\right)^4 W^{\varphi_{\ell}^i}\left(\frac{\psi}{L}\right) \, d\psi \\&+\dfrac{8\pi^2}{16 }\dfrac{\ell(\ell+1)}{2L}\dfrac{1}{4!}
	\int_{1}^{2r_{\ell} L} P_{\ell}\left(\cos \frac{\psi}{L}\right)^4 W^{\varphi_{\ell}^i}\left(\frac{\psi}{L}\right) \, d\psi
		\end{split}
		\end{equation}
		In view of (\ref{second}), the first integral in (\ref{integral}) is 
		\begin{equation}\label{st}
		O_{||{\tilde{\varphi}_\ell^i}||_\infty} \bigg( \int_{0}^{1} P_{\ell}\left(\cos \frac{\psi}{L}\right)^4 \psi r_\ell^2 \,d\psi \bigg);
		\end{equation} we bound $|P_\ell(x)|$ with 1 and then we obtain that (\ref{st}) is $O_{||{\tilde{\varphi}_\ell^i}||_\infty} (r_\ell^2)$.
		To compute the second integral in (\ref{integral}), we exploit (\ref{first}) to get

			\begin{equation}\label{4.38}
		\begin{split}
	&	\underbrace{\dfrac{1}{16 }\dfrac{\ell(\ell+1)}{2}\dfrac{1}{4!}  8\pi^2 \frac{r_{\ell}^2}{L 4\pi} \int_{1}^{2r_{\ell} L} P_{\ell}\left(\cos \frac{\psi}{L}\right)^4 \frac{\psi}{L} ||{\tilde{\varphi}_\ell^i}||_{L^2(\tilde{B}_1)}^2  \, d\psi}_{(i)}
\\&+
\underbrace{ O_{||\tilde{\varphi}_\ell^i||_{\infty},V(\tilde{\varphi}_\ell^i)} \bigg(	\dfrac{1}{16 }\dfrac{\ell(\ell+1)}{2L}\dfrac{1}{4!} 8\pi^2 \frac{r_{\ell}}{L^2 } \int_{1}^{2r_{\ell} L} P_{\ell}\left(\cos \frac{\psi}{L}\right)^4 \psi^2 \, d\psi}_{(ii)} \\&+
	\underbrace {	O_{||{\tilde{\varphi}_\ell^i}||_\infty} \bigg( \dfrac{1}{16 }\dfrac{\ell(\ell+1)}{2}\dfrac{1}{4!} 8\pi^2 \frac{r_{\ell}^2}{L} \int_{1}^{2r_{\ell} L} P_{\ell}\left(\cos \frac{\psi}{L}\right)^4 \frac{\psi^3}{L^3} \, d\psi \bigg)}_{(iii)}.
		\end{split}
		\end{equation}
Now, the leading term is
		\begin{equation}
			\begin{split}
		(i)&
		=\dfrac{1}{16 }\dfrac{\ell(\ell+1)}{2}\dfrac{1}{4!} 2\pi ||\tilde{\varphi}_\ell^i||_{L^2(B_1)}^2  \frac{r_{\ell}^2}{L^2} \int_{1}^{2r_{\ell}L} P_\ell\left( \cos \frac{\psi}{L}\right)^4 \psi \,d\psi
			\end{split}
		\end{equation}
		and thanks to Lemma \ref{lemmaP4} and Lemma \ref{lemmaP}, $(i)$ is
		\begin{equation}
			\begin{split}
			=\frac{||\tilde{\varphi}_\ell^i||_{L^2(\tilde{B}_1)}}{256\pi} r_{\ell}^2 \log(r_{\ell}  \ell ).
			\end{split}
		\end{equation}
	With similar calculations, it is easy to verify that $(ii)$ is 	$O_{||\tilde{\varphi}_\ell^i||_{\infty},V(\tilde{\varphi}_\ell^i)}(r_\ell^2)$ and $(iii)$ is $O_{||\tilde{\varphi}_\ell^i||_{\infty}}(r_\ell^2)$
	and hence the conclusion of the proposition follows.
	\end{proof}
	We prove now Proposition \ref{propM}. 
	\begin{proof}[Proof of Proposition \ref{propM}]
		In a similar way to the proof of Proposition \ref{prop5.1}, we can write	
		\begin{equation}\label{4.44}
		\begin{split}
		{E}[\mathcal{M}^{\varphi_{\ell}^i}(T_\ell)^2]&=O \bigg( \dfrac{\ell(\ell+1)}{L} \int_{0}^{2r_{\ell} L} P_{\ell}\left(\cos \frac{\psi}{L}\right)^4 W^{\varphi_{\ell}^i} \bigg(\frac{\psi}{L}\bigg) \,d\psi \bigg).
		\end{split}
		\end{equation}
	Splitting $[0,2 r_\ell \ell]=[0,1] \cup [1,r_\ell \ell]$, for $\psi \in [0,1]$, we can bound $|P_\ell(x)|\leq 1, \forall x \in [-1,1]$ and exploiting (\ref{second}), we get that 
\begin{equation}\label{E}
O \bigg(\ell \int_{0}^{1} P_\ell^4\left(\cos \frac{\psi}{L}\right) W^{\varphi_{\ell}^i} \bigg(\frac{\psi}{L} \bigg)\, d\psi\bigg)=O\bigg( r_{\ell}^2||\tilde{\varphi}_\ell^i||_{\infty}^2 \int_{0}^{1}  \psi \,d\psi \bigg)= O\bigg({r_{\ell}^2} ||\tilde{\varphi}_\ell^i||_{\infty}\bigg). 
\end{equation}
	Moreover, (\ref{Pazero}), Lemma \ref{lemmaP} and (\ref{first}) imply that
	\begin{equation}\label{EE}
	\begin{split}
	&O \bigg(\ell \int_{1}^{2r_\ell L} P_\ell^4\left(\cos \frac{\psi}{L}\right) W^{\varphi_{\ell}^i} \bigg(\frac{\psi}{L} \bigg)\, d\psi\bigg)=O\bigg(\frac{r_{\ell}^2 \ell}{\ell} \int_{1}^{2r_{\ell} L} \psi P_{\ell}\left(\cos \frac{\psi}{L}\right)^4 ||\tilde{\varphi}_\ell^i||_{L^2(\tilde{B}_1)}^2\,d\psi \bigg)\\&=O\bigg(r_{\ell}^2 ||\tilde{\varphi}_\ell^i||_{L^2(\tilde{B}_1)}^2\int_{1}^{2r_{\ell} L } \frac{1}{\psi^2} \psi \,d\psi \bigg)=
	O\left(r_{\ell}^2 ||\tilde{\varphi}_\ell^i||_{\infty} ||\tilde{\varphi}_\ell^i||_{L^1(\tilde{B}_1)} \log r_{\ell} \ell\right).
	\end{split}
	\end{equation}
Since (\ref{E}) is dominated by (\ref{EE}), the conclusion of the Proposition follows.
	\end{proof}

	\subsubsection{Fourth cumulant of the fourth chaotic component}\label{fourthcumulantsubsection}
	
	In light of the orthogonality of the chaotic components, the full correlation between $\mathcal{Z}_{\ell,r_\ell}$ and $\mathcal{M}_{\ell,r_\ell}$ implies that $$\Corr(\mathcal{M}_{{\ell},r_{\ell}}; Proj(\mathcal{Z}_{\ell,r_{\ell}}|C_4))=1+O\bigg(\dfrac{1}{\log r_\ell \ell}\bigg).$$
		Now to establish the validity of the CLT for the sequence $\mathcal{Z}_{\ell, r_\ell}$, we prove first that it holds for $\mathcal{M}_{\ell, r_\ell}$. In order to do that we appeal to the Fourth Moment Theorem (\cite{Peccati Nourdin}, Theorem 5.2.6), which states that, for random variables belonging to a Wiener chaos it is sufficient to show that the fourth cumulant divided by the square of the variance tends to zero to conclude that the CLT holds. Hence we investigate in the lemma below the fourth cumulant of $h_{\ell,r_\ell,4}$ (defined in \ref{h4}).
	\begin{lem}\label{4cum}
		Let $h_{\ell,r_\ell,4}$ defined as (\ref{h4}), as $\ell \rightarrow \infty,$
		\begin{equation}
		\begin{split}
		cum_4\{h_{\ell,r_\ell,4}\}
		=O\bigg( \dfrac{r_\ell^4}{\ell^4} \log r_\ell \ell \bigg).
		\end{split}
		\end{equation}
	\end{lem}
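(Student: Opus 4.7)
The plan is to expand the fourth cumulant via the classical diagram formula for elements of Wiener chaos and then bound each diagrammatic contribution by a careful use of Hilb's asymptotic for $P_\ell$. Since $T_\ell$ is Gaussian with covariance $\mathbb{E}[T_\ell(x)T_\ell(y)]=P_\ell(\cos d(x,y))$ and each $H_4(T_\ell(x))$ lies in the fourth Wiener chaos, the diagram formula (see, e.g., Peccati--Taqqu) gives
\begin{equation*}
\mathrm{cum}_4(h_{\ell,r_\ell,4}) = \sum_{G\in\mathcal{G}} c_G \int_{B_{r_\ell}^4} \prod_{1\le i<j\le 4} P_\ell(\langle x_i,x_j\rangle)^{n_{ij}}\,dx_1\cdots dx_4,
\end{equation*}
where $\mathcal{G}$ is the finite collection of connected multigraphs on four vertices, with no loops, such that every vertex has degree $q=4$ (so that $\sum_{i<j}n_{ij}=8$), and $c_G$ are universal combinatorial constants. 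Only a handful of such graphs exist (the cycle $C_4$ with all multiplicities equal to $2$, the $K_4$-type with edge multiplicities $(2,1,1,1,1,2)$ up to symmetry, configurations with a triple edge, etc.), so the problem reduces to estimating each integral $I_G$ uniformly.

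The key analytic input is Hilb's asymptotic: for $C/L\le\theta\le\pi-C/L$,
\begin{equation*}
P_\ell(\cos\theta)=\sqrt{\tfrac{\theta}{\sin\theta}}\,J_0(L\theta)+O(\ell^{-3/2}\theta^{-1/2}),
\end{equation*}
which, combined with the trivial bound $|P_\ell|\le 1$, yields for each fixed $x\in B_{r_\ell}$ the $L^p$-type estimates
\begin{equation*}
\int_{B_{r_\ell}}|P_\ell(\langle x,y\rangle)|^{2}\,dy\ll \frac{\log(r_\ell\ell)}{\ell}, \qquad \int_{B_{r_\ell}}|P_\ell(\langle x,y\rangle)|^{p}\,dy\ll \ell^{1-p}\quad (p\ge 3),
\end{equation*}
with an additional factor $\log(\eps\ell)$ arising from the near-diagonal region $d(x,y)\le\eps$ where only $|P_\ell|\le 1$ is available.

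For each connected diagram $G$, I would fix $x_1\in B_{r_\ell}$ and integrate $x_2,x_3,x_4$ successively, applying H\"older's inequality at each step so that each factor $|P_\ell|^{n_{ij}}$ enters through an appropriate $L^{p_{ij}}(B_{r_\ell})$ norm controlled by the estimate above. The degree identity $\sum_j n_{ij}=4$ forces a cumulative factor $\ell^{-4}$, while the remaining free integration in $x_1$ gives $|B_{r_\ell}|\asymp r_\ell^2$; a further $r_\ell^2$ appears when one rescales to the tangent-plane variables in the Hilb regime. Only the edges with multiplicity $n_{ij}=2$ can generate a logarithm through the $L^2$-bound, and a fractional use of H\"older distributes these logarithms across the integration steps to produce the fractional exponents $7/4$ and $1/4$ appearing in the stated bound.

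The main obstacle is precisely this fractional tracking of the logarithmic factors: the naive bound on the dominating diagram (the cycle $C_4$ with all multiplicities equal to $2$, where four $L^2$-estimates concur) only yields $(\log r_\ell\ell)^{2}$, larger by a $1/4$-exponent than claimed; an optimized choice of H\"older exponents must be used to transfer a fractional share of the logarithm into the near-diagonal factor $(\log\eps\ell)^{1/4}$, producing exactly $(\log\eps\ell)^{1/4}(\log r_\ell\ell)^{7/4}$. The subdominant diagrams, containing a triple or quadruple edge, enjoy purely polynomial decay with no $L^2$ logarithmic loss, so their contributions are absorbed into the error term $O(r_\ell^4/\ell^4)$.
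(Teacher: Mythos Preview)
Your overall framework (diagram formula, Hilb asymptotics, H\"older) is the one the paper uses, but there is a genuine gap at the decisive step, and several of your explicit claims are incorrect. First, the single-variable $L^p$ estimates you write are off: for fixed $x$ one has $\int_{B_{r_\ell}}|P_\ell(\langle x,y\rangle)|^{2}\,dy\ll r_\ell/\ell$ with \emph{no} logarithm, whereas $\int_{B_{r_\ell}}|P_\ell(\langle x,y\rangle)|^{4}\,dy\asymp \ell^{-2}\log(r_\ell\ell)$ (not $\ell^{-3}$); the logarithm arises from the fourth power, not the square. Second---and this is the real gap---you do not identify the mechanism that actually produces the $\eps$-dependent fractional exponents. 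The paper does not obtain them via ``an optimized choice of H\"older exponents'' but by splitting the whole domain $(B_{r_\ell})^4$ into a \emph{global} region $\mathcal{L}(\eps)=\{d(x_i,x_j)>\eps\ \text{for all }i<j\}$ and its complement. On $\mathcal{L}(\eps)$ one exploits the uniform bound $|P_\ell|\ll(\ell\eps)^{-1/2}$ to strip off two Legendre factors before Cauchy--Schwarz, gaining a factor $(\ell\eps)^{-1}$. On $\mathcal{L}(\eps)^c$ at least one pair is $\eps$-close, so after Cauchy--Schwarz/H\"older one of the resulting $\int P_\ell^{4}$ factors is restricted to $\{d<\eps\}$, yielding $\log(\eps\ell)$ in place of $\log(r_\ell\ell)$. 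Without this split your argument does not generate any $\eps$-dependence at all, and the vague ``optimized H\"older'' step is precisely where the proof would fail.

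Your classification of diagrams is also reversed. The configuration with a triple edge (multiplicities $(1,3,1,3)$ on a $4$-cycle) is \emph{not} subdominant with ``purely polynomial decay'': in the paper it is exactly this term (denoted $A_1$) whose local part, bounded via H\"older with exponents $1/4$ and $3/4$, delivers the stated rate $(r_\ell^4/\ell^4)(\log\eps\ell)^{1/4}(\log r_\ell\ell)^{7/4}$. The all-$2$ cycle ($A_2$) is smaller on the local part, contributing only $(r_\ell^4/\ell^4)(\log\eps\ell)^{1/2}(\log r_\ell\ell)^{3/2}$.
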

	\begin{proof}
	Following \cite{M e W 2012}, in order to find a bound for the fourth cumulant of $h_{\ell,r_\ell,4}$, we need to control the following two quantities $A_1$ and $A_2$ (see the supplement article \cite{supp}, Section A.2 and \cite{M e W 2012} for details):
		$$A_1=\int_{(B_{r_{\ell}})^4} \!P_{\ell}(\langle x_1,x_2\rangle)P_{\ell}(\langle x_1,x_3\rangle)^3  P_{\ell}(\langle x_3,x_4\rangle)P_{\ell}(\langle x_2,x_4 \rangle)^3\, \mu(dx_1)\mu(dx_2)\mu(dx_3)\mu(dx_4),$$
	$$A_2=\int_{(B_{r_{\ell}})^4}\! \!\!P_{\ell}(\langle x_1,x_2\rangle)^2 P_{\ell}(\langle x_1,x_3\rangle)^2  P_{\ell}(\langle x_3,x_4\rangle)^2P_{\ell}(\langle x_2,x_4 \rangle)^2\, \mu(dx_1)\mu(dx_2)\mu(dx_3)\mu(dx_4),$$
	where $\mu (dx_{i})$ denotes Lebesgue measure on the sphere.
	Let us focus on $A_1$; its absolute value is bounded by 
\begin{equation}\label{n}
\begin{split}
	\int_{(B_{r_{\ell }})^{4}}\left\vert P_{\ell }(\cos d(x_{1},x_{2}))\right\vert &
\left\vert P_{\ell }(\cos d(x_{1},x_{3}))^{3}\right\vert \left\vert P_{\ell
	}(\cos d(x_{3},x_{4}))\right\vert \times \\& \times \left\vert P_{\ell }(\cos
	d(x_{2},x_{4}))^{3}\right\vert \mu (dx_{1})\mu (dx_{2})\mu (dx_{3})\mu (dx_{4}).
	\end{split}
\end{equation}
 Arguing as in \cite{NPR}, we use the inequality: $x^a y^b \leq x^{a+b}+y^{a+b}$, where $x,y$ are positive, to obtain that $(\ref{n})$ can be bounded by 
	\begin{equation}\label{n1}
	\begin{split}
&	\int_{(B_{r_{\ell }})^{4}}\left\vert P_{\ell }(\cos d(x_{2},x_{4}))\right\vert
^{3}\left\vert P_{\ell }(\cos d(x_{3},x_{4}))\right\vert \left\vert P_{\ell
}(\cos d(x_{1},x_{2}))\right\vert ^{4}\mu (dx_{1})\mu (dx_{2})\mu (dx_{3})\mu
(dx_{4})\\&
+ 	\int_{(B_{r_{\ell }})^{4}}\! \!\!\!\left\vert P_{\ell }(\cos d(x_{2},x_{4}))\right\vert
^{3}\left\vert P_{\ell }(\cos d(x_{3},x_{4}))\right\vert \left\vert P_{\ell
}(\cos d(x_{1},x_{3}))\right\vert ^{4}\mu (dx_{1})\mu (dx_{2})\mu (dx_{3})\mu
(dx_{4}).
	\end{split}
	\end{equation}
Let us focus on the first term of (\ref{n1}). It is simple to check that, for any $x_{2}\in B_{r_{\ell} }$%
	\[
	\int_{B_{r_{\ell} }}\left\vert P_{\ell }(\cos d(x_{1},x_{2}))\right\vert
	^{4}\mu (dx_{1})\leq \int_{B_{2r_{\ell }}}\left\vert P_{\ell }(\cos
	d(N,x_{1}))\right\vert ^{4}\mu (dx_{1}),
	\]%
	\newline
	where $N$ denotes the North Pole (note the doubling of the radius in $%
	B_{2r_{\ell }}$
	).
	Since $|P_\ell(x)|\leq 1$, for $ x \in [0,1]$, we have that $\frac{1}{L}\int_{0}^{1} P_\ell (\cos \frac{\psi}{L})^q \frac{\psi}{L} \,d\psi=O(\frac{1}{\ell^2})$, $\forall q \geq 1$; then
	by Hilb's asymptotics (see Lemma \ref{hilbs}) %
	\begin{eqnarray*}
		\int_{B_{2r_{\ell} }}\left\vert P_{\ell }(\cos d(N,x_{1}))\right\vert ^{4}\mu
		(dx_{1}) &\leq &Const \times \frac{1}{\ell ^{2}}\int_{C}^{2\ell r_{\ell }}%
		\frac{1}{\psi }d\psi +O\bigg(\frac{1}{\ell^2}\bigg) \\
		&\leq &Const\times \frac{\log r_{\ell }\ell }{\ell ^{2}}+ O\bigg(\frac{1}{\ell^2}\bigg)
	\end{eqnarray*}%
	and similarly%
	\begin{eqnarray*}
		\int_{B_{r_{\ell}}}\left\vert P_{\ell }(\cos d(x_{3},x_{4}))\right\vert \mu
		(dx_{3}) &\leq &\int_{B_{2r_{\ell }}}\left\vert P_{\ell }(\cos
		d(N,x_{3}))\right\vert \mu (dx_{3}) \\
		&\leq &Const\times \frac{1}{\ell ^{2}}\int_{C}^{2\ell r_{\ell }}\sqrt{\psi }%
		d\psi +O\bigg(\frac{1}{\ell^2}\bigg) \\
		&\leq &Const\times \frac{r_{\ell }^{3/2}}{\sqrt{\ell }}+ O\bigg(\frac{1}{\ell^2}\bigg)\text{,}
	\end{eqnarray*}%
	\begin{eqnarray*}
		\int_{B_{r_{\ell}}}\left\vert P_{\ell }(\cos d(x_{2},x_{4}))\right\vert
		^{3}\mu (dx_{2}) &\leq &\int_{B_{2r_{\ell }}}\left\vert P_{\ell }(\cos
		d(N,x_{2}))\right\vert ^{3}\mu (dx_{2}) \\
		&\leq &Const\times \frac{1}{\ell ^{2}}\int_{C}^{2\ell r_{\ell }}\frac{1}{%
			\sqrt{\psi }}d\psi +O\bigg(\frac{1}{\ell^2}\bigg)\\
		&\leq &Const\times \frac{r_{\ell }^{1/2}}{\sqrt{\ell ^{3}}} +O\bigg(\frac{1}{\ell^2}\bigg)\text{,}
	\end{eqnarray*}%
	while obviously%
$$
	\int_{B_{r_{\ell}}}\mu (dx_{4})=O(r_{\ell }^{2})\text{.}
$$
	It follows that%
	\begin{equation*}
	\begin{split}
\int_{(B_{r_{\ell }})^{4}}&\left\vert P_{\ell }(\cos d(x_{1},x_{2}))\right\vert 
\left\vert P_{\ell }(\cos d(x_{1},x_{3}))^{3}\right\vert \left\vert P_{\ell
}(\cos d(x_{3},x_{4}))\right\vert \times \\& \times \left\vert P_{\ell }(\cos
d(x_{2},x_{4}))^{3}\right\vert \mu (dx_{1})\mu (dx_{2})\mu (dx_{3})\mu (dx_{4})
	=O\bigg(r_{\ell }^{4}\frac{\log r_{\ell }\ell }{\ell ^{4}}\bigg)\text{,}
		\end{split}
	\end{equation*}
	as needed. Equivalent computations give the same bound for the second term in (\ref{n1}).\\
As far as the term $A_2$ is concerned, we need to bound
\begin{equation}\label{n2}
	\begin{split}
\int_{(B_{r_{\ell }})^{4}}\left\vert P_{\ell }(\cos d(x_{1},x_{2}))^2\right\vert &
\left\vert P_{\ell }(\cos d(x_{1},x_{3}))^{2}\right\vert \left\vert P_{\ell
}(\cos d(x_{3},x_{4}))^2\right\vert \times \\& \times \left\vert P_{\ell }(\cos
d(x_{2},x_{4}))^2\right\vert \mu (dx_{1})\mu (dx_{2})\mu (dx_{3})\mu (dx_{4}).
	\end{split}
\end{equation}
The same strategy we have applied to $A_1$ leads (\ref{n2}) to be bounded by
$$
\int_{(B_{r_{\ell }})^{4}}\!\!\left\vert P_{\ell }(\cos d(x_{1},x_{2}))\right\vert
^{4}\left\vert P_{\ell }(\cos d(x_{3},x_{4}))^2\right\vert \left\vert P_{\ell
}(\cos d(x_{2},x_{4}))\right\vert ^{2}\mu (dx_{1})\mu (dx_{2})\mu (dx_{3})\mu
(dx_{4})
$$
$$
+\int_{(B_{r_{\ell }})^{4}}\!\!\!\!\left\vert P_{\ell }(\cos d(x_{1},x_{3}))\right\vert
^{4}\left\vert P_{\ell }(\cos d(x_{3},x_{4}))^2\right\vert \left\vert P_{\ell
}(\cos d(x_{2},x_{4}))\right\vert ^{2}\mu (dx_{1})\mu (dx_{2})\mu (dx_{3})\mu
(dx_{4})
$$
and since
\begin{equation*}
\begin{split}
\int_{B_{r_{\ell }}}P_{\ell }(\cos d(x_{3},x_{4}))^2 \,\mu(dx_3) &\leq \int_{B_{2r_\ell}} |P_\ell(\cos d (N,x_3)) |^2\,\mu(dx_3)
\\& \leq Const \times \dfrac{1}{\ell^2} \int_{C}^{2r_\ell L} \,d\psi+ O\bigg(\frac{1}{\ell^2}\bigg)\\& \leq Const \times \frac{r_\ell}{\ell}+O\bigg(\frac{1}{\ell^2}\bigg) ,
	\end{split}
\end{equation*}
we obtain that
$$A_2=O\bigg( r_\ell^2 \times \frac{r_\ell}{\ell} \frac{r_\ell}{\ell} \frac{\log r_\ell \ell}{\ell^2} \bigg)=O\bigg( \frac{r_\ell^4 \log r_\ell \ell}{\ell^4}\bigg)$$
and the conclusion of the lemma follows.
	\end{proof}

	\subsubsection{Proof of Theorem \ref{CLT}}\label{cltproof}
	From Lemma \ref{4cum} we conclude that
		\begin{equation}\label{eq}
	cum_4(\mathcal{M}_{\ell,r_\ell})=O\bigg( r_\ell^4 \log(r_\ell \ell)\bigg)
	\end{equation}
and, in view of Proposition \ref{lemma2.3},  the Fourth Moment Theorem (\cite{Peccati Nourdin}, Theorem 5.2.6) implies that 
$$d_W(\mathcal{M}_{\ell,r_\ell}, Z) \leq C\sqrt{\frac{cum_4(\mathcal{M}_{\ell,r_\ell})}{\Var(\mathcal{M}_{\ell,r_\ell})^2}}=O\bigg( \frac{1}{\sqrt{\log r_\ell \ell}} \bigg),$$
where $Z \sim \mathcal{N}(0,1)$ and $C$ is an explicit constant. Defining $$\tilde{\mathcal{M}}_{\ell,r_\ell}:= \dfrac{\mathcal{M}_{\ell,r_\ell}}{\sqrt{\Var(\mathcal{M}_{\ell,r_\ell})}} \mbox{, and } \tilde{\mathcal{Z}}_{\ell,r_\ell}:= \dfrac{\mathcal{Z}_{\ell,r_\ell}}{\sqrt{\Var(\mathcal{Z}_{\ell,r_\ell})}},$$ it follows that, as $\ell \rightarrow \infty$,
	$$d_{W}(\tilde{\mathcal{Z}}_{\ell,r_\ell}, Z) \leq d_W({\tilde{\mathcal{M}}}_{\ell,r_\ell},Z)+\sqrt{{E}[{\tilde{\mathcal{Z}}}_{\ell,r_\ell}-{\tilde{\mathcal{M}}}_{\ell,r_\ell}]^2}=O\bigg(\frac{1}{\sqrt{\log r_\ell \ell}}\bigg).$$

\section{Further Result: Correlation between $\mathcal{Z}_{\ell,r_{\ell}}$ and $\mathcal{Z}(T_\ell) $ (proof of Proposition \ref{correlation1})}\label{Section-further}
As we have already said in the introduction, contrary to the 2-dimensional torus, the nodal length on the total sphere and the one on its subregions are not correlated; indeed we prove here Proposition \ref{correlation1}. Before doing that, we compute the covariance between $\mathcal{Z}_{\ell,r_{\ell}}$ and $ \mathcal{Z}(T_\ell)$ in the lemma here below.
\begin{lem}\label{co} The covariance between $\mathcal{Z}_{\ell,r_{\ell}}$ and $ \mathcal{Z}(T_\ell)$ is given by
	$$\Cov(\mathcal{Z}_{\ell,r_{\ell}}, \mathcal{Z}(T_\ell))=\dfrac{|B_{r_{\ell}}|}{|{S}^2|} \Var(\mathcal{Z}(T_\ell)).$$
\end{lem}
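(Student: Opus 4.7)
The plan is to exploit the isotropy of $T_\ell$ together with the fact that both nodal lengths can be written as integrals of the \emph{same} local density, the only difference being the region over which the density is integrated. The clever observation is that the inner product $\mathbb{E}[(\text{nodal density at } x)\cdot \mathcal{Z}(T_\ell)]$ must be constant in $x$, because $\mathcal{Z}(T_\ell)$ is a rotationally invariant functional of $T_\ell$.

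First, following the Kac--Rice approximation already used in Section~4.2.1, I introduce
$$\Psi_\eps(x):=\|\nabla T_\ell(x)\|\,\chi_\eps(T_\ell(x)),$$
so that as $\eps\to 0$ one has both $\int_{\mathbb{S}^2}\Psi_\eps(x)\,dx \to \mathcal{Z}(T_\ell)$ and $\int_{B_{r_\ell}}\Psi_\eps(x)\,dx \to \mathcal{Z}_{\ell,r_\ell}$, almost surely and in $L^2(\Omega)$ (this is precisely the convergence invoked in the proof of Lemma~\ref{lemmasmooth}). By continuity of the inner product in $L^2(\Omega)$, it suffices to work with $\Psi_\eps$ and pass to the limit at the end.

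Next, for any rotation $g\in SO(3)$, isotropy gives that $\Psi_\eps(gx)$ has the same joint law with $T_\ell$ as $\Psi_\eps(x)$ does with $T_\ell\circ g^{-1}$; since $\mathcal{Z}(T_\ell)=\mathcal{Z}(T_\ell\circ g^{-1})$ is rotationally invariant, we obtain
$$\mathbb{E}[\Psi_\eps(x)\,\mathcal{Z}(T_\ell)]=\mathbb{E}[\Psi_\eps(gx)\,\mathcal{Z}(T_\ell)]\quad\text{for all }g\in SO(3),$$
so this quantity is a constant $c_{\ell,\eps}$ independent of $x$. Integrating over $\mathbb{S}^2$ and using Fubini on the right-hand side gives
$$c_{\ell,\eps}\cdot|\mathbb{S}^2|=\int_{\mathbb{S}^2}\mathbb{E}[\Psi_\eps(x)\,\mathcal{Z}(T_\ell)]\,dx=\mathbb{E}\!\left[\mathcal{Z}(T_\ell)\int_{\mathbb{S}^2}\Psi_\eps(x)\,dx\right],$$
which tends to $\mathbb{E}[\mathcal{Z}(T_\ell)^2]$ as $\eps\to 0$. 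On the other hand, integrating the same constant only over $B_{r_\ell}$ yields
$$c_{\ell,\eps}\cdot|B_{r_\ell}|=\mathbb{E}\!\left[\mathcal{Z}(T_\ell)\int_{B_{r_\ell}}\Psi_\eps(x)\,dx\right]\;\xrightarrow[\eps\to 0]{}\;\mathbb{E}[\mathcal{Z}_{\ell,r_\ell}\,\mathcal{Z}(T_\ell)].$$
Dividing the two limits gives $\mathbb{E}[\mathcal{Z}_{\ell,r_\ell}\,\mathcal{Z}(T_\ell)]=\tfrac{|B_{r_\ell}|}{|\mathbb{S}^2|}\mathbb{E}[\mathcal{Z}(T_\ell)^2]$.

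Finally, the same isotropy argument applied with $\mathcal{Z}(T_\ell)$ replaced by the constant $1$ recovers the already known identity $\mathbb{E}[\mathcal{Z}_{\ell,r_\ell}]=\tfrac{|B_{r_\ell}|}{|\mathbb{S}^2|}\mathbb{E}[\mathcal{Z}(T_\ell)]$ (consistent with the Kac--Rice expectations reported in the Introduction). Subtracting $\mathbb{E}[\mathcal{Z}_{\ell,r_\ell}]\,\mathbb{E}[\mathcal{Z}(T_\ell)]=\tfrac{|B_{r_\ell}|}{|\mathbb{S}^2|}\mathbb{E}[\mathcal{Z}(T_\ell)]^2$ from the previous display yields the claim. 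The only delicate point is the interchange of limit and integral when passing from $\Psi_\eps$ to the nodal length, which is handled exactly as in Section~4.2.1 via the $L^2(\Omega)$-convergence of $\int \varphi\,\Psi_\eps\,dx$ for bounded $\varphi$; all other steps are symmetry-driven and require no new computation.
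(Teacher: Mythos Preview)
Your proof is correct and rests on the same underlying idea as the paper's: by isotropy, pairing the local nodal density at a point $x$ with the global nodal length $\mathcal{Z}(T_\ell)$ produces a quantity independent of $x$, so the ratio of the cross-moment to the second moment is simply $|B_{r_\ell}|/|\mathbb{S}^2|$. The paper implements this by writing $\mathbb{E}[\mathcal{Z}_{\ell,r_\ell}\mathcal{Z}(T_\ell)]=\int_{\mathbb{S}^2\times B_{r_\ell}}\tilde{K}_\ell(x,y)\,dx\,dy$ via the two-point correlation function, observing that $\int_{\mathbb{S}^2}\tilde{K}_\ell(x,y)\,dy$ depends only on $d(x,y)$ and hence is constant in $x$, and then matching the resulting one-dimensional integral against the known expression for $\Var(\mathcal{Z}(T_\ell))$ in spherical coordinates. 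Your version bypasses the explicit introduction of $\tilde{K}_\ell$ and the coordinate computation by arguing directly that $x\mapsto\mathbb{E}[\Psi_\eps(x)\mathcal{Z}(T_\ell)]$ is $SO(3)$-invariant, which is slightly cleaner and makes the $\eps$-regularisation (implicit in the paper's delta-function notation) explicit. The trade-off is that the paper's route ties the result visibly to the Kac--Rice machinery already set up, whereas yours is more self-contained but relies on the $L^2$-convergence statement borrowed from Section~4.2.1.
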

\begin{proof} The proof of this lemma follows from the field's rotation invariance. Indeed, let consider $B_r$ the ball of radius $r$, for any $r>0$; we shall write the covariance as
	\begin{equation}
	\begin{split}
	{E}[\mathcal{Z}_{\ell,r}\cdot \mathcal{Z}(T_\ell) ]&=E\bigg[\int_{{S}^2 } ||\nabla(T_\ell(x))|| \delta(T_\ell(x)) \,dx \int_{B_{r}} ||\nabla T_\ell(y)|| \delta(T_\ell(y)) \,dy\bigg] \\&
	=\int_{{S}^2\times B_{r}} {E}[||\nabla T_\ell(x)|| ||\nabla T_\ell(y)|| \delta(T_\ell(x)) \delta(T_\ell(y))] \,dx dy\\&
	=\int_{{S}^2\times B_{r}} \tilde{K}_{\ell}(x,y) \, dxdy=|B_{r}| \int_{{S}^2} \tilde{K}_{\ell}(N,y) \, dy.
	\end{split}
	\end{equation}
	Then, taking $r=r_\ell$ and $r=\pi$, $B_r=B_{r_\ell}$ and $B_r=S^2$, respectively, we get	\begin{equation}
	{E}[\mathcal{Z}_{\ell,r_\ell}\cdot \mathcal{Z}(T_\ell) ]=|B_{r_\ell}| \int_{{S}^2} \tilde{K}_{\ell}(N,y) \, dy
	\end{equation}
	and 
	\begin{equation}
	{\Var}[ \mathcal{Z}(T_\ell) ]=|S^2| \int_{{S}^2} \tilde{K}_{\ell}(N,y) \, dy
	\end{equation}
	and the conclusion of the lemma follows.
\end{proof}
\begin{proof}[Proof of Proposition \ref{correlation1}]
	By definition, the correlation is
	\begin{equation}
	\begin{split}
	\Corr(\mathcal{Z}_{\ell,r_{\ell}};\mathcal{Z}(T_\ell))= \dfrac{\Cov(\mathcal{Z}_{\ell,r_{\ell}}; \mathcal{Z}(T_\ell))}{\sqrt{\Var(\mathcal{Z}_{\ell,r_{\ell}})} \sqrt{\Var(\mathcal{Z}(T_\ell))}}
	\end{split}
	\end{equation}
	and Lemma \ref{co} implies that
	\begin{equation}
	\begin{split}
	\Corr(\mathcal{Z}_{\ell,r_{\ell}};\mathcal{Z}(T_\ell))&=\dfrac{|B_{r_{\ell}}|}{|{S}^2|} \dfrac{\sqrt{\Var(\mathcal{Z}(T_\ell))}}{\sqrt{\Var(\mathcal{Z}_{\ell,r_{\ell}})}}=\dfrac{2\pi(1-\cos r_{\ell})}{4\pi} \dfrac{\sqrt{\Var(\mathcal{Z}(T_\ell))}}{\sqrt{\Var(\mathcal{Z}_{\ell,r_{\ell}})}}\\&=
	\dfrac{(1-\cos r_{\ell})}{2} \dfrac{\sqrt{\Var(\mathcal{Z}(T_\ell))}}{\sqrt{\Var(\mathcal{Z}_{\ell,r_{\ell}})}};
	\end{split}
	\end{equation}
	in view of Theorem \ref{varianza nodal} and (\ref{varianza vecchia}), it results that
	\begin{equation}
	\begin{split}
	\Corr(\mathcal{Z}_{\ell;r_{\ell}}; \mathcal{Z}(T_\ell))&=\dfrac{1-\cos r_{\ell}}{2} \sqrt{\dfrac{\frac{1}{32} \log {\ell} +O(1)}{\frac{r_{\ell}^2}{256} \log {\ell}r_{\ell}+O(r_{\ell}^2)}} = \dfrac{1-\cos r_{\ell}}{2r_{\ell}} \sqrt{\dfrac{ \log {\ell}}{\log(r_{\ell}  \ell )} +O(1)} \sqrt{8}\\&
	=\dfrac{1-\cos r_{\ell}}{2r_{\ell}^2} \sqrt{r_{\ell}^2 \frac{\log {\ell}}{\log r_{\ell}  \ell }+O(r_{\ell}^2)} \sqrt{8} =O\bigg(\sqrt{{r_\ell}^2 \frac{\log \ell}{\log r_\ell \ell} }\bigg).
	\end{split}
	\end{equation}
	Now, to prove that this quantity goes to zero, we note that, either $r_\ell \geq \frac{1}{\sqrt{\ell}}$, then 
	$$\bigg|r_\ell^2 \frac{\log \ell}{\log r_\ell \ell} \bigg| \leq \bigg|r_\ell^2 \frac{\log \ell}{\log \sqrt{r_\ell}} \bigg|= 2 r_\ell^2$$ which goes to zero because $r_\ell \rightarrow 0$; or if $r_\ell \leq \ell^{-1/2}$, since $r_\ell \ell \rightarrow +\infty$, we can bound $\log r_\ell \ell$ from below for $\ell$ large and get $$\bigg|r_\ell^2 \frac{\log \ell}{\log r_\ell \ell}\bigg|=O(r_\ell^2 \log \ell)=O\left(\frac{1}{\ell} \log \ell \right)=o(1).$$
\end{proof}
	\section{Technical tools} \label{App2point}
	In this section we collect some results exploited in the previous computations.\\
For the purpose of the present paper, let us note the following result.
\begin{lem}\label{lemmaK} For $1 <\psi<r_\ell \ell,$ as $\ell \rightarrow \infty$,
	$$K_\ell-\frac{1}{4}= \frac{1}{2\pi} \frac{\sin(2\psi)}{\psi}+ \frac{P_1(\psi)}{\psi^2} + O\left(\frac{1}{\psi^3}\right), $$
	where $P_1(\psi)$ is the trigonometric polynomial given by
	\begin{equation}\label{PK}
	P_1(\psi)=\dfrac{1}{256\pi^2} + \dfrac{9}{32 \pi}\cos(2\psi)+\frac{27}{64\pi^2}\sin(2\psi)-\frac{75}{256\pi^2}\cos (4\psi).
	\end{equation}
\end{lem}

\begin{proof}
	Let us consider the expansion in (\ref{56})
	holding uniformly for $C<\psi<\dfrac{\pi L}{2}$.
	In the regime $[1,r_\ell \ell]$, $r_\ell \ell=o(L)$ and the terms $\sin (\psi/L)$ appearing in all the denominators can be replaced by 
	\begin{equation}\label{taylor sin}
	\sin \frac{\psi}{L}+O \bigg( \frac{\psi^3}{L^3} \bigg).
	\end{equation}
	Hence, we have
	\begin{equation}
	\begin{split}
	K_\ell(\psi)-\frac{1}{4}&= \bigg[ \frac{\sin 2\psi}{2\pi \ell}+ \frac{1}{256 \pi^2 \ell \psi}+ \frac{9\cos 2\psi}{32 \pi \psi \ell }+ \frac{\frac{27}{64}\sin 2\psi-\frac{75}{256}\cos (4\psi)}{\pi^2 \ell \psi \sin (\psi/L)} \bigg] \frac{1}{\frac{\psi}{L}+ O(\psi^3/L^3)}+\\&+O\bigg(\frac{1}{\psi^3}+\frac{1}{\ell \psi}\bigg)\\&
	=\bigg[ \frac{\sin 2\psi}{2\pi \psi}+ \frac{1}{256 \pi^2 \psi^2}+ \frac{9\cos 2\psi}{32 \pi \psi^2 }+ \frac{\frac{27}{64}\sin 2\psi-\frac{75}{256}\cos (4\psi)}{\pi^2 \psi^2 \sin (\psi/L)} \bigg] + O\bigg(\dfrac{1}{\psi^3}\bigg)
	\end{split}
	\end{equation}
	Denoting $P_1(\psi)$ the trigonometric polynomial given in (\ref{PK}) the conclusion of the lemma follows.
\end{proof}


\begin{lem}\label{lemmaJ} For $1<\psi <r_\ell \ell$, as $\ell \rightarrow \infty$,
	$$\mathcal{J}_\ell (\psi;4)= L \frac{P_2(\psi)}{\psi^2}+O\left(\frac{1}{\psi^3}\right),$$
	where the trigonometric polynomial $P_2(\psi)$ is
	\begin{equation}\label{PJ}
	P_2(\psi)= \dfrac{1}{64}+\frac{5}{64} \cos 4\psi-\frac{3}{16} \sin(2\psi).
	\end{equation}
\end{lem}

\begin{proof}
	Similarly to the proof of Lemma \ref{lemmaK}, we substitute $\sin(\psi/L)$ with its Taylor expansion (\ref{taylor sin}) in equation (\ref{3.3}), holding for $C<\psi<L \frac{\pi}{2}$, and defining $P_2(\psi)$ as in (\ref{PJ}) the thesis follows.
\end{proof}
Other useful results for our computations are given by the following lemmas.
\begin{lem}\label{lemma utile1} As $x \rightarrow \infty$,
	$$\int_{1}^{x} \frac{1}{\psi^2} \,d\psi=O(1).$$
\end{lem}

\begin{lem}\label{lemmaP}
	Let $P(\psi)=a_0+a_1 \cos \psi+\dots+a_m \cos(m\psi)+b_1 \sin(\psi)+\dots+b_m \sin(m\psi)$ a general trigonometric polynomial. Then, as $x \rightarrow +\infty$, $$\int_{1}^{x} \frac{P(\psi)}{\psi} \,d\psi=a_0 \log (x)+O(1).$$
\end{lem}

\begin{proof}
	We have that $$\int_{1}^{x} \frac{P(\psi)}{\psi} \,d\psi= \int_{1}^{x} \frac{a_0+a_1 \cos \psi+\dots+a_m \cos(m\psi)+b_1 \sin(\psi)+\dots+b_m \sin(m\psi)}{\psi} \,d\psi.$$
	Let us focus, for example, on $$\int_{1}^{x} a_1 \frac{\cos(\psi)}{\psi} \,d\psi.$$ Integrating by parts, it becomes
	$$ a_1\bigg[ \frac{\sin(\psi)}{\psi} -\int_{1}^{x} \frac{\sin\psi}{\psi^2} \,d\psi\bigg]_{1}^{x}$$
	and thanks to the Lemma \ref{lemma utile1} and to the fact that the function $\sin \psi$ is bounded, it is $O(1)$, as $\ell \rightarrow \infty$. In the same way, it is possible to see that, as $\ell \rightarrow \infty$,
	$$ \int_{1}^{x} \frac{a_2 \cos 2 \psi+\dots+a_m \cos(m\psi)+b_1 \sin(\psi)+\dots+b_m \sin(m\psi)}{\psi} \,d\psi=O(1)$$ and hence the leading term of $\int_{1}^{x} \frac{P(\psi)}{\psi} \,d\psi$ is given by
	$$\int_{1}^{x} \frac{a_0}{\psi} \,d\psi=a_0 \log(x).$$
\end{proof}

As a consequence of Lemma \ref{lemmaP}, we get the following corallary.
\begin{cor}\label{corollary}
	As $\ell \rightarrow \infty$, 
	\begin{equation}\label{lemmaKpsi}
	\int_{1}^{r_\ell \ell} \left(K_\ell(\psi)-\frac{1}{4}\right) \psi d\psi= \frac{1}{256 \pi^2} \log(r_\ell \ell)+O(1)
	\end{equation}
and
	\begin{equation}\label{lemma Jpsi}
	\frac{1}{L}\int_{1}^{r_\ell \ell} \mathcal{J}_\ell(\psi)\psi d\psi= \frac{1}{64} \log(r_\ell \ell)+O(1).
	\end{equation}
\end{cor}

\begin{lem}[Hilb's Asymptotics (formula (8.21.17) on page 197 in \cite{szego})]\label{hilbs}
	\begin{equation}
	P_{\ell}(\cos \phi)= \bigg(\frac{\phi}{\sin \phi}\bigg)^{1/2} J_0((\ell+1/2)\phi)+\delta(\phi),
	\end{equation}
	uniformly for $0\leq \phi \leq \pi/2$, where $J_0$ is the Bessel function of order 0, defined as
	$J_0(x)=\sum_{k=0}^{\infty} \dfrac{(-1)^kx^{2k}}{2^{2k}(k!)^2}$, and the error term is
	$$\delta(\phi) \ll  \begin{cases} \phi^{1/2} O(\ell^{-3/2}), & C\ell^{-1}<\phi<\pi/2 \\ \phi^{2}O(1), & 0<\phi<C\ell^{-1},\end{cases}$$
	where $C>0$ is any constant and the constants involved in the $``O"$-notation depend on $C$ only.
\end{lem}
In particular, for $\theta \in [0,\pi/2],$
\begin{equation}\label{hilb2}
P_\ell(\cos\theta) \ll \frac{1}{\sqrt{\ell \theta}}.
\end{equation}
Actually, changing variable $\Psi=L \theta$, with $L=\ell+\frac{1}{2}$, we have that
$$P_\ell \bigg(\cos \bigg(\frac{\psi}{\ell+1/2} \bigg) \bigg) \sim J_0(\psi) $$ and
$$J_0(\psi)= \sqrt{\frac{2}{\pi}} \frac{\cos(\psi-\pi/4)}{\sqrt{\psi}}+O\bigg(\frac{1}{\psi^{3/2}}\bigg)$$
(see also \cite{M e W 2012}).


Lemma \ref{hilbs} implies (\ref{Pazero}) and the following result can be easily seen.

\begin{lem}\label{lemmaP4}
 For $1<\psi<r_\ell \ell$, as $\ell \rightarrow \infty$,
	$$
	P_{\ell}(\cos (\psi/L))^4=\frac{P_3(\psi)}{\psi^2}+O\bigg(\dfrac{1}{\psi^3}\bigg),$$
	where
	$$P_3(\psi)=\frac{3}{2\pi^2}-\frac{2}{\pi^2}\sin(2\psi)-\frac{1}{2\pi^2} \cos(4\psi).$$
\end{lem}

\section*{Acknowledgements}
The author would like to thank Domenico Marinucci and Igor Wigman for the proposal of the topic, for all the useful suggestions and for all the discussions and the insightful remarks. Most of the research was done in the department of mathematics of King's College of London and in the University of Rome Tor Vergata, to which the author is grateful for the warm hospitality. Many thanks to an anonimus referee for the useful suggestions and remarks. Finally, thanks to Valentina Cammarota for some suggestions. The author was financially supported by the GSSI, the UMI with the Grant for Visiting student and the German Research Foundation (DFG) via RTG 2131.


\newpage

\section*{Supplementary Material}
\appendix
\section{Background Material}
\subsection{Wiener Chaos}

In this part we recall the notion of Wiener chaos mentioned in the introduction. For a complete discussion see \cite{Peccati Nourdin}, Chap. 2.2.
Let us consider the sequence $\{ H_q \}_{q \in N}$ of Hermite polynomials on $R$, defined as follows
$$H_0 = 1$$
$$H_q(t) = tH_{q-1}(t) - H_{q-1}^\prime (t), \mbox{ } q \geq 1.$$ 
It is useful to recall the following property: let $Z_1, Z_2$ jointly
Gaussian; then, for all $q_1, q_2 \geq 0$ 
\begin{equation}\label{propertyH}
E[H_{q_1}(Z_1)H_{q_2}(Z_2)] = q_1!\delta_{q_1}^{q_2} E[Z_1Z_2].
\end{equation}
Now, we recall that the family $\mathbf{H}=\{H_q, q \geq 0\}$ is a complete orthogonal system in the space of square integrable functions $L^2(\gamma)$, where $\gamma$ denotes the standard Gaussian density on R. We define the space $\chi$ to be the closure in $L^2(P)$ of all real finite linear combinations of random variables $\xi$ of the form $\xi= z a_{\ell m}+ \bar{z}(-1)^{\ell} a_{\ell, -m}$, $z \in C$ and $a_{\ell m}$ independent Gaussian random variables  with the condition $\bar{a}_{\ell m}=(-1)^\ell a_{\ell, -m}$. The space $\chi$ is a real centered Gaussian Hilbert subspace of $L^2(P)$. \\
We define the space of constants $C_0:=R \subset L^2(P)$ and for $q \geq 1$ an integer, the $q-$th Wiener chaos $C_q$ associated with $\chi$ is the closure of all real finite linear combinations of random variables of the type 
$$H_{p_1}(\xi_1)H_{p_2}(\xi_2)\cdots H_{p_k}(\xi_k)$$
$k \geq 1$, where the integers $p_1,\dots,p_k \geq 0$ are such that $p_1+\dots+p_k=q$ and $(\xi_1,\dots,\xi_k)$ is a standard real Gaussian vector extracted from $\chi$. It is possible to prove that $C_q \perp C_m$ in $L^2(P)$ for $q \ne m$ and that
$$L^2(\Omega, \sigma(\chi), P)=\bigoplus_{q=0}^{\infty} C_q.$$
Then, every real-valued functional $F$ of $\chi$ can be (uniquely) represented as a series, converging in $L^2$, of the form $$F=\sum_{q=0}^{\infty} Proj(F|C_q)$$ where the $Proj(F|C_q)$ is the projection of $F$ onto $C_q$ (in particular $Proj(F|C_0)=E[F]$).  

\subsubsection{Chaotic expansion for nodal lengths}
In the same lines of the case of the sphere (see \cite{MRW}, \cite{Rossi2}) an integral representation for the nodal length $\mathcal{Z}_{\ell,r_\ell}$ can be given by
$$\mathcal{Z}_{\ell,r_\ell}= \int_{B_{r_\ell}} \delta_0(T_\ell(x)) ||\nabla T_\ell(x)||\,dx,$$ where $\delta_0$ denotes the dirac delta function and $||\cdot||$ the standard Euclidean norm in $R^2$. This representation can be shown to hold almost surely in $\Omega$ and it is shown hold in $L^2(\Omega)$ (see \cite{MRW}). The $L^2$ expansion of nodal lengths takes the form (see \cite{MRW}, \cite{MPRW} and \cite{Rossi2})
\begin{equation*}
\begin{split}
\mathcal{Z}_{\ell,r_\ell}-E[\mathcal{Z}_{\ell,r_\ell}]= &\sqrt{\frac{\ell(\ell+1)}{2}} \sum_{q=2}^{\infty} \sum_{u=0}^{q} \sum_{k=0}^{u} \frac{\alpha_{k,u-k} \beta_{q-u}}{k!(u-k)!(q-u)!}  \\& \times \int_{B_{r_\ell}} H_{q-u}(T_\ell(x)) H_{k}\bigg(\frac{\partial_{1;x}T_\ell(x)}{\sqrt{\ell(\ell+1)/2}}\bigg) H_{u-k}\bigg(\frac{\partial_{2;x}T_\ell(x)}{\sqrt{\ell(\ell+1)/2}}\bigg)\, dx\\&
=\sum_{q=2}^{\infty} \int_{B_{r_\ell} } \Psi_\ell(x;q)\,dx,
\end{split}
\end{equation*}
where 
\begin{equation}
\begin{split}
\Psi_\ell(x;q)=\sqrt{\frac{\ell(\ell+1)}{2}} \sum_{u=0}^{q} \sum_{k=0}^{u} \frac{\alpha_{k,u-k} \beta_{q-u}}{k!(u-k)!(q-u)!} &H_{q-u}(T_\ell(x)) H_{k}\bigg(\frac{\partial_{1;x}T_\ell(x)}{\sqrt{\ell(\ell+1)/2}}\bigg) \\& \times H_{u-k}\bigg(\frac{\partial_{2;x}T_\ell(x)}{\sqrt{\ell(\ell+1)/2}}\bigg)\, dx.
\end{split}
\end{equation}
In spherical coordinates $(\theta,\varphi)$ and for $x=(\theta_x,\varphi_x)$,
$$\partial_{1;x}=\frac{\partial}{d \theta}\bigg|_{\theta=\theta_x}\mbox{ ,} \mbox{ }  \partial_{2;x}=\frac{1}{\sin \theta} \frac{\partial}{d \varphi} \bigg|_{\theta=\theta_x, \varphi=\varphi_x}.$$
In particular, denoting as $\tilde{\mathcal{Z}}_{\ell,r_\ell}=\dfrac{\mathcal{Z}_{\ell,r_\ell}-E[\mathcal{Z}_{\ell,r_\ell}]}{\sqrt{\Var({\mathcal{Z}_{\ell,r_\ell}})}}$, the projection of the nodal length on the fourth-order chaos has the expression
$$Proj(\tilde{\mathcal{Z}}_{\ell,r_\ell}|C_4)= \int_{B_{r_\ell}} \Psi_\ell(x;4)\,dx$$
\begin{equation*}
\begin{split}
&= \sqrt{\frac{\ell(\ell+1)}{2}} \bigg\{  \frac{\alpha_{0,0} \beta_{4}}{4!} \int_{B_{r_\ell} } H_{4}(T_\ell(x)) \,dx +  \frac{\alpha_{2,0} \beta_{2}}{2!2!} \int_{B_{r_\ell} } H_{2}(T_\ell(x)) H_{2}\bigg(\frac{\partial_{1;x}T_\ell(x)}{\sqrt{\ell(\ell+1)/2}}\bigg) \,dx \\& +
\frac{\alpha_{4,0} \beta_{4}}{4!} \int_{B_{r_\ell}} \!H_{4}\bigg(\frac{\partial_{1;x}T_\ell(x)}{\sqrt{\ell(\ell+1)/2}}\bigg) \,dx+  \frac{\alpha_{2,2} \beta_{0}}{2!2!} \int_{B_{r_\ell}} \!H_{2}\bigg(\frac{\partial_{1;x}T_\ell(x)}{\sqrt{\ell(\ell+1)/2}} \bigg) H_{2}\bigg(\frac{\partial_{2;x}T_\ell(x)}{\sqrt{\ell(\ell+1)/2}}\bigg) \, dx\\&
+   \frac{\alpha_{0,2} \beta_{2}}{2!2!} \int_{B_{r_\ell}} H_{2} (T_\ell(x)) H_2 \bigg(\frac{\partial_{2;x}T_\ell(x)}{\sqrt{\ell(\ell+1)/2}}\bigg) \,dx+  \frac{\alpha_{0,4} \beta_{0}}{4!} \int_{B_{r_\ell}} H_{4}\bigg(\frac{\partial_{2;x}T_\ell(x)}{\sqrt{\ell(\ell+1)/2}}\bigg) \, dx \bigg\}.
\end{split}
\end{equation*}

\subsection{On the Fourth cumulant of the fourth chaotic projection}
Let us consider in this section the following lemma proved in the paper.
\begin{lem}\label{4cumS}
	Let $h_{\ell,r_\ell,4}$ defined as 
	\begin{equation}\label{h4S}
	h_{\ell,r_\ell,4}:=\int_{B_{r_{\ell}}}H_4(T_\ell(x)) \, dx,
	\end{equation}then, as $\ell \rightarrow \infty,$
	\begin{equation}\label{unaequa}
	\begin{split}
	cum_4\{h_{\ell,r_\ell,4}\}
	=O\bigg( \dfrac{r_\ell^4}{\ell^4} \log r_\ell \ell \bigg).
	\end{split}
	\end{equation}
\end{lem}
We stated at the beginning of the proof of Lemma 5.4 that, to bound the fourth cumulant of  $h_{\ell,r_\ell,4}$, it is sufficient to study the two integrals: 
$$A_1=\int_{B_{r_{\ell}}^4} P_{\ell}(\langle x_1,x_2\rangle)P_{\ell}(\langle x_1,x_3\rangle)^3  P_{\ell}(\langle x_3,x_4\rangle)P_{\ell}(\langle x_2,x_4 \rangle)^3\, \mu (dx_{1})\mu (dx_{2})\mu (dx_{3})\mu (dx_{4})$$ and
$$A_2=\int_{B_{r_{\ell}}^4} P_{\ell}(\langle x_1,x_2\rangle)^2 P_{\ell}(\langle x_1,x_3\rangle)^2  P_{\ell}(\langle x_3,x_4\rangle)^2P_{\ell}(\langle x_2,x_4 \rangle)^2\, \mu (dx_{1})\mu (dx_{2})\mu (dx_{3})\mu (dx_{4}),$$
where $\mu (dx_{i})$ denotes Lebesgue measure on the sphere.
To see that, we can follow exactly the argument in \cite{M e W 2012}, which we report for completeness. Hence, we recall that a diagram is a graph with $(\alpha_1+\dots+\alpha_p)$ vertexes labelled by $1,\dots,p$, such that each vertex has degree 1. The set of all such graphs $\gamma$ is denoted by $\Gamma(\alpha_1,\dots,\alpha_p)$. We denote by $\Gamma_C (\alpha_1,\dots,\alpha_p)$ the graphs which are connected.\\
Given a diagram $\gamma$, let $\eta(\gamma)=\eta_{ij}(\gamma) \in Z^{\binom{p}{2}}$ the vector whose $\binom{p}{2}$ elements $\eta_{i,j}(\gamma)$ $(i <j)$ are the number of edges between $i$ and $j$ in the graph $\gamma.$ The vector $\eta$ satisfies $\sum_{ij} \eta_{i,j}=2q.$ The following lemma is proved in \cite{M e W 2012}.
\begin{lem}\label{rid}[\cite{M e W 2012}, Lemma 2.1]
	Let $\gamma \in \Gamma_C(q,q,q,q)$ with arbitrary $q\geq1$, and $\eta=\eta(\gamma)$. Let $e=(i,j)$ any edge in $\gamma$ and $e^\prime=(i^\prime,j^\prime)$ the unique edge with vertexes disjoint with $e$, so that $\{ i,j,i^\prime,j^\prime \}=\{ 1,2,3,4 \}$. Then $\eta_e=\eta_{e^\prime}$.
\end{lem}
It is shown in \cite{M e W 2012} that the fourth cumulant can be computed by
$$cum_4\bigg[\int_{B_{r_\ell}}H_4(T_\ell(x)) \, dx \bigg]=\sum_{\gamma \in \Gamma_c(4,4,4,4)} M(\eta(\gamma))$$ where for a vector $\eta \in Z^6_{\geq 0}$,
$$M(\eta)= \int_{B_{r_{\ell}}^4} \prod_{i<j } P_{\ell}(\langle x_i,y_j\rangle)^{\eta_{i,j}} \, \mu (dx),$$ where $\mu (dx)=\mu (dx_{1})\mu (dx_{2})\mu (dx_{3})\mu (dx_{4})$.
Now, we use Lemma \ref{rid} and the Cauchy-Schwartz inequality to reduce the number of different angles; when we apply the latter inequality, it is advantageous to pair up angles corresponding to disjoint edges in the diagram. In the end all the configuarations can be bounded by ones where $\eta$ has one of the following two shapes
$$\eta=(2,2,2,2,0,0)$$ or $$\eta=(1,3,1,3,0,0).$$
Then the proof of Lemma \ref{4cumS} reduces to the control of the two integrals $A_1$ and $A_2$.
Another way to prove this reduction follows by \cite{PT}, Proposition 11.2, where it is proved that it is sufficient to bound only the terms corresponding to circular diagrams (i.e. diagrams,
all of whose rows are linked with precisely two other rows) to establish the CLT.

\section{Technical tools}

\subsection{$L^2$ approximation for nodal lengths}
Following the same idea and notation in \cite{MRW}, we define
\begin{equation}\label{Zapprox}
\mathcal{Z}_{\ell,r_\ell;\eps}:= \int_{B_{r_\ell}} ||\nabla f_{\ell}(x)||\chi_\eps(T_\ell(x)) \,dx. 
\end{equation}

We show the $L^2$ convergence of the nodal length in the lemma here below.

\begin{lem}\label{L2approx} Let $\mathcal{Z}_{\ell,r_\ell;\eps}$ be defined as in (\ref{Zapprox}), we have that, as $\eps \rightarrow 0$,
	\begin{equation}
	\lim_{\eps \rightarrow 0} E[|\mathcal{Z}_{\ell,r_\ell;\eps}-\mathcal{Z}_{\ell,r_\ell}|^2]=0.
	\end{equation}
\end{lem}
\begin{proof} This argument follows closely \cite{MRW} and it is included for completeness. Hence, the nodal length is defined almost-surely by
	$$\lim_{\eps \rightarrow 0} \int_{B_{r_\ell}} \chi_\eps(T_\ell(x)) ||\nabla T_\ell(x)||\,dx$$
	and from the standard argument (\cite{RW}, Lemma 3.1) the almost-sure convergence follows. Indeed, since $\chi_\eps(\cdot)= \frac{1}{2\eps}1_{[-\eps,\eps]}(\cdot)$ is integrable and $T_\ell$ is smooth, we have thanks to the co-area formula (\cite{Adler e Taylor}, p.169)
	$$\int_{B_{r_\ell}} \chi_\eps(T_\ell(x)) ||\nabla T_\ell(x)||\,dx= \int_{R} \bigg\{ \int_{T_\ell^{-1}(s) \cap B_{r_\ell}} \chi_\eps(T_\ell(x) ) \, dx\bigg\} \,ds.$$
	Since
	$$\chi_\eps(T_\ell(x))=\begin{cases}0 & \mbox{ for } x: T_\ell(x) >\eps \\ \frac{1}{2\eps} & \mbox{ for } x : T_\ell(x) \leq \eps
	\end{cases}$$
	and the function $s \rightarrow \Vol[T_\ell(s)^{-1} \cap B_{r_\ell}]$ is continuous for regular (Morse) functions, we obtain
	$$\int_{R} \bigg\{ \int_{T_\ell^{-1}(s) \cap B_{r_\ell}} \chi_\eps(T_\ell(x) )\, dx \bigg\} \,ds= \frac{1}{2\eps} \int_{-\eps}^{\eps} \Vol \left[T_\ell^{-1}(s) \cap B_{r_\ell}\right]\,ds \rightarrow \Vol \left[T_\ell^{-1}(0) \cap B_{r_\ell} \right],$$
	as $\eps\rightarrow 0.$ We now show that the convergence occurs also in the $L^2$ sense. To this aim, since the convergence holds almost surely, it is sufficient to prove that $$\lim_{\eps \rightarrow 0} E[\mathcal{Z}_{\ell,r_\ell,\eps}^2] =E[\mathcal{Z}_{\ell,r_\ell}^2].$$
	Note that,
	\begin{equation}
	\begin{split}
	E[\mathcal{Z}_{\ell,r_\ell,\eps}^2]&= E \bigg[ \bigg\{  \int_{B_{r_{\ell}}} \{ \chi_\eps(T_\ell(x)) ||\nabla T_\ell(x)|| \} \,dx \bigg\}^2 \bigg]\\&
	= E \bigg[ \bigg\{  \int_{R} \int_{\{x \in B_{r_\ell }:T_\ell(x)=u\}} \{ \chi_\eps(T_\ell(x)) ||\nabla T_\ell(x)|| \} \,dx \bigg\}^2 \bigg]\\&
	= E \bigg[ \bigg\{  \int_{R} \mathcal{Z}_{\ell,r_\ell}(u) \chi_\eps(T_\ell(u)) \,du \bigg\}^2 \bigg].
	\end{split}
	\end{equation}
	The application $u \rightarrow E[\{ \mathcal{Z}_{\ell,r_\ell}(u) \}^2]$, where $\mathcal{Z}_{\ell,r_\ell}(u)=\len (\{ x \in S^2 \cap B_{r_\ell}: T_\ell(x)=u  \})$, is continuous, where 
	\begin{equation}
	\begin{split}
	&E[\mathcal{Z}_{\ell,r_\ell}^2(u)]\!=\!\int_{B_{r_\ell} \times B_{r_\ell}}\!\!\!\!\!\!\!\! E\left[||\nabla T_\ell(x_1)|| ||\nabla T_\ell(x_2)|| | T_\ell(x_1)\!=\!u, T_\ell(x_2)\!=\!u\right] \phi_{T_\ell(x_1), T_\ell(x_2)} (u,u) \,dx_1 dx_2\\&
	=\!8\pi^2 \int_{0}^{2r_\ell} \!\!\! E\left[||\nabla T_\ell(N)|| ||\nabla T_\ell(y(\rho))|| | T_\ell(N)\!=\!u, T_\ell(y(\rho))\!=\!u\right] \phi_{T_\ell(N), T_\ell(y(\rho))} (u,u) W_{r_\ell}(\rho) \,d\rho.
	\end{split}
	\end{equation}
	To check the continuity, it is enough to show that the Dominated Convergence Theorem holds; we first note that 
	$$ \phi_{T_\ell(N), T_\ell(y(\rho))} (u,u) W_{r_\ell}(\rho) \leq  \phi_{T_\ell(N), T_\ell(y(\rho))} (0,0) W_{r_\ell}(\rho)= \dfrac{1}{2\pi \sqrt{1-P_\ell(\cos \theta)^2}}W_{r_{\ell}} (\rho)$$ which is $O(1)$ uniformly in $\rho$
	since $W_{r_\ell}(\rho) \sim r_\ell^3 \tilde{W}_1(\frac{\rho}{r_\ell}) $ (where $\sim$ means that $\lim_{\rho \rightarrow 0} \frac{W_{r_\ell}(\rho)}{r_\ell^3 \tilde{W}_1(\frac{\rho}{r_\ell})}=1 $) and 
	$$\tilde{W}_1(\frac{\rho}{r_\ell})= \frac{1}{8\pi^2} \frac{\rho}{r_\ell} \int_{0}^{2\pi} \tilde{\varphi}_x(\rho \cos \phi,\rho \sin \phi)\, d\phi dx$$ with $\tilde{\varphi}_x(\rho \cos \phi,\rho \sin \phi)$ a bounded function. On the other hand the evaluation of $$E[||\nabla T_\ell(N)|| ||\nabla T_\ell(y(\rho))|| | T_\ell(N)=u, T_\ell(y(\rho))=u]$$ is given in \cite{MRW} and it is seen to be uniformly bounded over $\rho$. Then, the Dominated Convergence Theorem holds. It follows that
	
	\begin{equation}
	\begin{split}
	E[\mathcal{Z}_{\ell,r_\ell}^2] &\leq \liminf_{\eps \rightarrow 0} E \bigg[ \bigg\{  \int_{B_{r_\ell}} \{\chi_\eps(T_\ell(x)) ||\nabla T_\ell(x)||  \} \,dx
	\bigg\}^2 \bigg]\\&
	= \liminf_{\eps \rightarrow 0} E[\mathcal{L}_{\ell,r_\ell;\eps}^2] \leq \limsup_{\eps \rightarrow 0} E[\mathcal{Z}_{\ell,r_\ell;\eps}^2]\\&
	= \limsup_{\eps \rightarrow 0} E \bigg[ \bigg\{  \int_{B_{r_\ell}} \{\chi_\eps(T_\ell(x)) ||\nabla T_\ell(x)||  \} \,dx 
	\bigg\}^2 \bigg]\\&
	= \limsup_{\eps \rightarrow 0} E \bigg[ \bigg\{ \int_{R} \{  \mathcal{Z}_{\ell,r_\ell}(u) \chi_\eps(u)     \} \,du \bigg\}^2\bigg] \\&
	\leq \limsup_{\eps \rightarrow 0} \int_{R}  E[  \mathcal{Z}_{\ell,r_\ell}^2(u)] \chi_\eps(u)   \,du= E[\mathcal{Z}_{\ell,r_\ell}^2].
	\end{split}
	\end{equation}
	
\end{proof}


\subsection{On the proof of Lemma 5.2 }

In this section we want to prove the following result.
\begin{lem}\label{A}
	$$\Cov(\mathcal{Z}_{\ell,r_\ell}, \mathcal{M}_{\ell,r_\ell})= \lim_{i \rightarrow \infty} \int_{0}^{2r_\ell \ell} \mathcal{J}_\ell (\psi,4) W^{\varphi_{{\ell}}^i}(\frac{\psi}{L}) \,d\psi,$$
	where
	$$\mathcal{\mathcal{J}}_{\ell}(\psi;4)=\bigg[ -\frac{1}{4} \sqrt{\dfrac{\ell(\ell+1)}{2}} \dfrac{1}{4!} \bigg] \times \dfrac{8\pi^2}{L} {E}\left[\Psi_{\ell}(\bar{x},4)H_4\left(T_\ell\left(y\left(\frac{\psi}{L}\right)\right)\right)\right],$$ where we wrote $\bar{x}=(0,0)$ for the North Pole and $y(\rho)=(0,\rho)$ for the points on the meridian where $\varphi = 0$.
\end{lem}

\begin{proof} To prove this lemma we can follow the same steps as in \cite{MRW}, proof of Theorem 1.2.. We report them for completeness.
	Let us define $$\Psi_\eps(x):= ||\nabla f_{\ell}(x)||\chi_\eps(T_\ell(x)), \mbox{ } \chi_\eps(\cdot)=\frac{1}{2\eps} 1_{[-\eps,\eps]}(\cdot).$$
	
	$\Psi_\eps(x)$ admits the $L^2(\Omega)$ expansion
	
	$$\Psi_{\eps}(x)=E[\Psi_\eps(x)]+ \sum_{q=2}^{\infty} \Psi_{\ell;\eps}(x;q);$$ 
	
	moreover, we established in Lemma \ref{L2approx} the $L^2(\Omega)$ convergence 
	$$\lim_{\eps \rightarrow 0} \int_{B_{r_\ell}} \Psi_{\ell}(x) \, dx= \lim_{\eps \rightarrow 0} \int_{B_{r_\ell}} ||\nabla f_{\ell}(x)||\chi_\eps(T_\ell(x)) \,dx = \mathcal{Z}_{\ell,r_\ell}.$$

	Note also that $\Psi_{\ell}(x), H_4(T_\ell(y))$ are both in $L^2(S^2 \times \Omega)$ and they are isotropic and thus
	\begin{equation}\label{nonso}
	\begin{split}
	\Cov(\mathcal{Z}_{\ell,r_\ell;\eps} ;\mathcal{M}_{\ell,r_\ell})&=-\dfrac{1}{4}\sqrt{\dfrac{\ell(\ell+1)}{2}}\dfrac{1}{4!}\Cov\bigg(\int_{B_{r_\ell}}  \Psi_\eps(x) \,dx, \int_{B_{r_\ell}} H_4(T_\ell(y)) \, dy\bigg)\\&
	=-\dfrac{1}{4}\sqrt{\dfrac{\ell(\ell+1)}{2}}\dfrac{1}{4!} {E} \left[\int_{B_{r_\ell}} \Psi_{\eps}(x) \,dx\int_{B_{r_\ell}}  H_4(T_\ell(y))\,dy\right]\\&
	=-\dfrac{1}{4}\sqrt{\dfrac{\ell(\ell+1)}{2}}\dfrac{1}{4!} \int_{B_{r_\ell}}\int_{B_{r_\ell}} {E}[\Psi_{\eps}(x)H_4(T_\ell(y))] \,dx dy\\&
	=-\dfrac{1}{4}\sqrt{\dfrac{\ell(\ell+1)}{2}}\dfrac{1}{4!}\int_{{S}^2 \times {S}^2} {E}\left[ \sum_{q=2}^{\infty} \Psi_{\ell;\eps}(x,q)H_4(T_\ell(y))\right] 1_{B_{r_\ell}}(x) 1_{B_{r_\ell}}(y) \, dx dy\\&
	=-\dfrac{1}{4}\sqrt{\dfrac{\ell(\ell+1)}{2}}\dfrac{1}{4!}\int_{{S}^2 \times {S}^2} {E}[ \Psi_{\ell;\eps}(x,4)H_4(T_\ell(y))] 1_{B_{r_\ell}}(x) 1_{B_{r_\ell}}(y) \, dx dy\\&
	=\lim_{i \rightarrow \infty} -\dfrac{1}{4}\sqrt{\dfrac{\ell(\ell+1)}{2}}\dfrac{1}{4!}\int_{{S}^2 \times {S}^2} {E}[  \Psi_{\ell;\eps}(x,4)H_4(T_\ell(y))] \varphi_\ell^i(x) \varphi_\ell^i(y) \, dx dy
	\end{split}
	\end{equation}
	in the last passage we exploited the $L^1(S^2)$ convergence of $\varphi_{{\ell}}^i$ to $1_{B_{r_\ell}}$. Indeed,
	\begin{equation}\label{A4}
	\begin{split}
	\bigg|&  \int_{{S}^2 \times {S}^2} {E}[  \Psi_{\ell;\eps}(x,4)H_4(T_\ell(y))] \varphi_\ell^i(x) \varphi_\ell^i(y)\, dx dy \\&- \int_{{S}^2 \times {S}^2} {E}[\Psi_{\eps}(x)H_4(T_\ell(y))] 1_{B_{r_\ell}}(x)1_{B_{r_\ell}}(y) \,dx dy \bigg|\\&
	\leq 	\int_{{S}^2 \times {S}^2} 	| {E}[  \Psi_{\ell;\eps}(x,4)H_4(T_\ell(y))]| | [\varphi_\ell^i(x) \varphi_\ell^i(y) -  1_{B_{r_\ell}}(x)1_{B_{r_\ell}}(y)]| \,dx dy\\&
	\leq 	\int_{{S}^2 \times {S}^2} 	| {E}[  \Psi_{\ell;\eps}(x,4)H_4(T_\ell(y))]| |  \varphi_\ell^i(x) -  1_{B_{r_\ell}}(x)| \varphi_\ell^i(y) \,dx dy\\&
	+	\int_{{S}^2 \times {S}^2} 	| {E}[  \Psi_{\ell;\eps}(x,4)H_4(T_\ell(y))]| |  \varphi_\ell^i(y) -  1_{B_{r_\ell}}(y)| 1_{B_{r_\ell}}(x) \,dx dy
	\end{split}
	\end{equation}
	and since we can bound  $	|{E}[\Psi_{\eps}(x)H_4(T_\ell(y))]| $ (see \cite{MRW}, Proposition 3.1), $ 1_{B_{r_\ell}}(x) $ and $ \varphi_\ell^i(y)$, (\ref{A4}) goes to zero as $i \rightarrow \infty$.
	
	Now, applying Fubini, equation (\ref{nonso}) is equal to
	\begin{equation}
	\begin{split}
	\lim_{i \rightarrow \infty}& -\dfrac{1}{4}\sqrt{\dfrac{\ell(\ell+1)}{2}}\dfrac{1}{4!} 8\pi^2 \int_{0}^{2r_\ell} {E}[ \Psi_{\ell;\eps}(x,4)H_4(T_\ell(y))] W^{\varphi_{\ell}^i}(\rho) \, d\rho.
	\end{split}
	\end{equation}
	In \cite{MRW}, Proposition 3.1, it is proved the term $ {E}[ \Psi_{\ell;\eps}(x,4)H_4(T_\ell(y))]$ can be computed explicitly and it is easily seen to be absolutely bounded for fixed $\ell$, uniformly over $\eps$. Hence, by the Dominated Convergence Theorem we may exchange the limit and the integral to obtain
	\begin{equation}\label{nS}
	\begin{split}
	&\Cov(\mathcal{Z}_{\ell,r_\ell}, \mathcal{M}_{\ell,r_\ell})= \lim_{\eps \rightarrow 0} \Cov(\mathcal{Z}_{\ell,r_\ell;\eps}, \mathcal{M}_{\ell,r_\ell})\\&
	=\lim_{\eps \rightarrow 0} \lim_{i \rightarrow \infty} -\dfrac{1}{4}\sqrt{\dfrac{\ell(\ell+1)}{2}} \frac{1}{4!}\times 8\pi^2 \int_{0}^{2r_{\ell}} {E}[\Psi_{\ell,\eps}(\bar{x},4) H_4(T_\ell(y(\rho)))] W^{\varphi_\ell^i}(\rho) d\rho\\&
	=\lim_{i \rightarrow \infty} -\dfrac{1}{4}\sqrt{\dfrac{\ell(\ell+1)}{2}} \frac{1}{4!}\times 8\pi^2 \int_{0}^{2r_{\ell}} \lim_{\eps \rightarrow 0} {E}[\Psi_{\ell,\eps}(\bar{x},4) H_4(T_\ell(y(\rho)))] W^{\varphi_\ell^i}(\rho) d\rho\\&=
	\lim_{i \rightarrow \infty} -\dfrac{1}{4}\sqrt{\dfrac{\ell(\ell+1)}{2}} \frac{1}{4!}\times 8\pi^2 \int_{0}^{2r_{\ell}}  {E}[\Psi_{\ell}(\bar{x},4) H_4(T_\ell(y(\rho)))] W^{\varphi_\ell^i}(\rho) d\rho\\&
	=\lim_{i \rightarrow \infty} \int_{0}^{2r_{\ell}\ell}  \mathcal{J}_\ell(\psi,4) W^{\varphi_\ell^i}(\frac{\psi}{L}) d\psi.
	\end{split}
	\end{equation}
\end{proof}




\subsection{Auxiliary function property}
Let us recall the following definitions:
\begin{equation}\label{WdefS}
W^{\varphi_\ell}(\rho):=\dfrac{1}{8\pi^2} \int_{d(x,y)=\rho} \varphi_\ell(x) \varphi_\ell(y) \,dx \,dy \mbox{ } \mbox{ } \mbox{ } x,y \in  {S}^2;
\end{equation}
\begin{equation}\label{W_1S}
\tilde{W_1} \bigg(\rho \dfrac{1}{r_\ell} \bigg):= \frac{1}{8\pi^2}\int_{d(x,y)=\frac{\rho}{r_\ell}}\tilde{\varphi_\ell}(r_\ell x) \tilde{\varphi_\ell}(r_\ell y) \,dx\,dy \mbox{ }\mbox{ }\mbox{ } x,y \in {R}^2.
\end{equation}
We give now the proof of the following lemma, which gives relation (3.6) of the main article.
\begin{lem}\label{lem} 
	Let $W_{r_{\ell}}(\cdot)$ and $ \tilde{W_{1}}(\cdot)$ as in (\ref{WdefS}) and (\ref{W_1S}), respectively; then,
	\begin{equation}\label{relazione Wr e tildeW1}
	W_{r_{\ell}}(\rho)=r_{\ell}^3 \tilde{W_{1}}\left(\rho \frac{1}{r_{\ell}}\right)(1+O(\rho^2))
	\end{equation}
	as $r_{\ell} \rightarrow 0 $ uniformly for $\rho \in [0,2r_\ell]$.
\end{lem}

\begin{proof}
	We set
	$D_{\rho}:=\{ x \in B_{r_{\ell}}: B_{\rho}(x) \subset B_{r_{\ell}}\};$
	then
	$$W_{r_{\ell}}(\rho)= \dfrac{1}{8\pi^2} \int_{D_{\rho}} \len\{ y \in B_{r_{\ell}} :d(x,y)=\rho \}\,dx+\dfrac{1}{8\pi^2} \int_{B_{r_\ell}-D_{\rho}} \len\{ y \in B_{r_{\ell}} :d(x,y)=\rho \}\,dx;$$
	we denote
	$$A:=  \dfrac{1}{8\pi^2} \int_{D_{\rho}} \len\{ y \in B_{r_{\ell}} :d(x,y)=\rho \}\,dx$$
	and $$B:=\dfrac{1}{8\pi^2} \int_{B_{r_\ell}-D_{\rho}} \len\{ y \in B_{r_{\ell}} :d(x,y)=\rho \}\,dx.$$
	$A$ is easily computed to be
	\begin{equation}\label{D}
	\begin{split}
	A&= \dfrac{1}{8\pi^2} \int_{D_{\rho}} \len\{ y \in B_{r_{\ell}} :d(x,y)=\rho \}\,dx
	=\dfrac{1}{8\pi^2} 2\pi \sin \rho |D_{\rho}|\\&=\dfrac{1}{8\pi^2} 2\pi \sin \rho \cdot 2\pi(1-\cos(r_\ell-\rho)).
	\end{split}
	\end{equation}
	Let us define also
	$\tilde{D}_{\rho/r_{\ell}}:= \{ x \in  \tilde{B}_1: \tilde{B}_{\rho/r_{\ell}}(x) \subset \tilde{B}_1 \};$
	likewise, we write
	$$	\tilde{W}_{1}\left(\rho\frac{1}{r_{\ell}}\right)= \tilde{A}+\tilde{B};$$
	where
	$$\tilde{A}:=  \dfrac{1}{8\pi^2} \int_{\tilde{D}_{\rho/r_{\ell}}} \len\left\{ y \in \tilde{B}_{1} :d(x,y)=\frac{\rho}{r_{\ell}} \right\}\,dx$$
	and $$\tilde{B}:=\dfrac{1}{8\pi^2} \int_{\tilde{B}_1-\tilde{D}_{\rho/r_{\ell}}} \len\left\{ y \in \tilde{B}_{1} :d(x,y)=\frac{\rho}{r_{\ell}} \right\}\,dx.$$
	Note that $$\tilde{A}=\dfrac{1}{8\pi^2} 2\pi \frac{\rho}{r_{\ell}} |\tilde{D}_{\rho/r_{\ell}}|= \dfrac{1}{8\pi^2} 2\pi \frac{\rho}{r_{\ell}}  \pi \bigg(1-\frac{\rho}{r_{\ell}}\bigg)^2;$$
	then, using the Taylor expansion of the sine and cosine as $r_{\ell} \rightarrow 0 $ (and so $\rho \rightarrow 0$), we get
	\begin{equation}
	\begin{split}
	A&=\dfrac{1}{8\pi^2} 2\pi \rho(1+O(\rho^2)) \pi \cdot (r_\ell-\rho)^2 (1+O(\rho)^2+O(r_\ell^2))\\&
	=\dfrac{1}{8\pi^2} 2\pi \frac{\rho}{r_{\ell}}\cdot \pi\bigg(1-\frac{\rho}{r_{\ell}}\bigg)^2 r_{\ell}^3 (1+O(\rho^2))(1+O(\rho)^2+O(r_\ell^2))\\&
	=r_{\ell}^3 \tilde{A}(1+O(\rho^2)+O(r_\ell^2)).
	\end{split}
	\end{equation}
	Now we prove that
	$$|B-\tilde{B}|\ll O(r_\ell^4+\rho^4)$$ and thus (\ref{relazione Wr e tildeW1}) follows. So,
	\begin{equation}\label{B-Btilde}
	\begin{split}
	&|B-\tilde{B}|\leq \\&
	\leq \bigg|\dfrac{1}{8\pi^2} \int_{B_{r_\ell}-D_{\rho}} \!\!\!\! \len\{ y \in B_{r_{\ell}} :d(x,y)=\rho \}\,dx- \dfrac{1}{8\pi^2} \int_{\tilde{B}_1-\tilde{D}_{\rho/r_{\ell}}} \!\!\!\!  \len\{ y \in \tilde{B}_{1} :d(x,y)=\frac{\rho}{r_{\ell}} \}\,dx\bigg|\\&
	=\bigg|\dfrac{1}{8\pi^2} \int_{B_{r_\ell}-D_{\rho}} \!\! \len\{ y \in B_{r_{\ell}} :d(x,y)=\rho \}\,dx- \dfrac{1}{8\pi^2} \int_{\tilde{B}_{r_\ell}-\tilde{D}_{\rho}} \! \len\{ y \in \tilde{B}_{r_\ell} :d(x,y)=\rho \}\,dx \bigg|,
	\end{split}
	\end{equation}
	where $\tilde{B}_{r_\ell} \subset {R}^2$ is the disc of radius $r_\ell$ and
	$ \tilde{D}_{\rho}:= \{ x \in  \tilde{B}_{r_\ell}: \tilde{B}_{\rho}(x) \subset \tilde{B}_{r_{\ell}} \};$ then (\ref{B-Btilde}) results to be 
	$$\ll 2\pi(1-\cos r_{\ell})-2\pi(1-\cos (r_\ell-\rho))-[\pi r_\ell^2-\pi( r_\ell-\rho)^2] \ll O(r_\ell^4)+O(\rho^4).$$
\end{proof}
As a consequence we can prove the following result.
\begin{cor} Let $W^{\varphi_{\ell}^i}(\cdot)$ and $\tilde{W}^{\tilde\varphi_\ell^i}(\cdot)$ defined as (\ref{WdefS}) and (\ref{W_1S}), respectively; $\varphi_{{\ell}}^i$ satisfies	\begin{equation}\label{propertiesS}
	\begin{split}
	& \varphi_{{\ell}}^i \rightarrow 1_{B_{r_\ell}} \mbox{ in }L^1({S}^2), \\
	& V(\varphi_{\ell}^i) \rightarrow V(1_{B_{r_{\ell}}}) \mbox{ and }\\
	& ||\varphi_{\ell}^i||_{\infty} \leq ||1_{B_{r_{\ell}}}||_{\infty};
	\end{split}
	\end{equation} and $\tilde{W}^{\tilde\varphi_\ell^i}:= \varphi_{{\ell}}^i \circ \exp.$ Then
	as $\ell \rightarrow \infty$,
	\begin{equation}\label{Rel1S}
	W^{\varphi_{\ell}^i}(\rho)=r_\ell^3 \tilde{W}^{\tilde\varphi_\ell^i}\left(\rho \frac{1}{r_\ell}\right)(1+O(\rho^2)),
	\end{equation}
	as $r_{\ell} \rightarrow 0 $ uniformly for $\rho \in [0,2r_\ell]$.
\end{cor}

\begin{proof}
	We have that 
	\begin{equation}\label{nome}
	\begin{split}
	|W^{\varphi_{\ell}^i}(\rho)&-r_\ell^3 \tilde{W}^{\tilde{\varphi}_\ell^i}(\rho \frac{1}{r_\ell})(1+O(\rho^2))| \leq |W^{{\varphi}_{\ell}^i}(\rho)-W_{r_\ell}(\rho)|\\&
	+|W_{r_\ell}(\rho)-r_\ell^3 \tilde{W}_1(\rho \frac{1}{r_\ell})(1+O(\rho^2))|\\&+|r_\ell^3 \tilde{W}_1(\rho \frac{1}{r_\ell})(1+O(\rho^2))-r_\ell^3 \tilde{W}^{\tilde{\varphi}_\ell^i}(\rho \frac{1}{r_\ell})(1+O(\rho^2))|
	\end{split}
	\end{equation}
	and the former and the latter quantities of (\ref{nome}) go to zero for the $L^1$ convergence of $\varphi_{\ell}^i \rightarrow 1_{B_{r_\ell}}$ and $\tilde{\varphi}_\ell^i \rightarrow 1_{\tilde{B}_1}$; in fact
	\begin{equation}
	\begin{split}
	|W^{\varphi_{\ell}^i}(\rho)-W_{r_\ell}(\rho)| &\leq \int_{{S}^2 \times {S}^2} |\varphi_{\ell}^i(x)\varphi_{\ell}^i(y)-1_{B_{r_\ell}}(x)1_{B_{r_\ell}}(y)|\,dxdy\\&
	\leq \int_{{S}^2 \times {S}^2}|\varphi_{\ell}^i(x)||\varphi_{\ell}^i(y)-1_{B_{r_\ell}}(y)|\,dxdy\\&+\int_{{S}^2 \times {S}^2}|1_{B_{r_\ell}}(y)||\varphi_{\ell}^i(x)-1_{B_{r_\ell}}(x)|\,dxdy \rightarrow 0.
	\end{split}
	\end{equation}
	and the conclusion of the lemma follows.
\end{proof}

\section{Further result}

\subsection{The second chaotic component}
In the lemma below, we show that the second chaotic component of the nodal length has lower order than the fourth one.
\begin{lem}\label{2chaos} The second component of the chaos expansion of $\mathcal{Z}_{\ell,r_\ell}$ is, as $\ell\rightarrow \infty,$
	$$Proj(\mathcal{Z}_{\ell,r_{\ell}}|C_2)=O(r_{\ell}^2).$$
\end{lem}

\begin{proof}
	Theorem 1.1 
	shows that $\Var(\mathcal{Z}_{\ell,r_\ell}) \sim \log r_\ell \ell$ and Proposition 2.3  
	shows that the orthogonal projection of $\mathcal{Z}_{\ell,r_\ell}$ along a well chosen vector in the fourth chaos is close to $\mathcal{Z}_{\ell,r_\ell}$ itself, up to a normalized error of $O\bigg( \dfrac{1}{\log r_\ell \ell} \bigg)$. Thus, the projection of $\mathcal{Z}_{\ell,r_\ell}$ onto any chaos of order different from four has variance 
	$$O\bigg( \frac{1}{\log (r_\ell \ell )} \times \Var(\mathcal{Z}_{\ell,r_\ell}) \bigg)=O(r_\ell^2).$$ 
\end{proof}

\end{document}